\newcolumntype{\ll}[1]{>{\hsize=#1\hsize\raggedright\arraybackslash}X}\newcolumntype{\rr}[1]{>{\hsize=#1\hsize\raggedleft\arraybackslash}X}\newcolumntype{\cc}[1]{>{\hsize=#1\hsize\centering\arraybackslash}X}
\newcommandx{\inline}[2][1=]{\todo[inline,#1]{#2}}
    \providecommand\@dotsep{5}
\theoremstyle{plain}
\newtheorem{thm}{Theorem}[section]
\newtheorem{theorem}[thm]{Theorem}
\newtheorem{lemma}[thm]{Lemma}
\newtheorem{proposition}[thm]{Proposition}
\newtheorem{corollary}[thm]{Corollary}
\theoremstyle{definition}
\theoremstyle{remark}
\newtheorem{remark}[thm]{Remark}
\numberwithin{equation}{section}
\newcommand{\bN}{\mathbb{N}}
\newcommand{\bZ}{\mathbb{Z}}
\newcommand{\cH}{\mathcal{H}}
\newcommand{\cP}{\mathcal{P}}
\newcommand{\cX}{\mathcal{X}}
\newcommand{\cY}{\mathcal{Y}}
\newcommand{\rr}{\varrho}
\newcommand{\id}{\varepsilon}
\DeclareMathOperator{\aut}{Aut} \DeclareMathOperator{\autI}{Aut_{I}} \DeclareMathOperator{\stab}{Stab}
\renewcommand{\leq}{\leqslant} \renewcommand{\geq}{\geqslant}
\renewcommand{\epsilon}{\varepsilon} \renewcommand{\subset}{\subseteq}  
\newcommand{\sm}{\setminus}
\DeclareMathOperator{\lcm}{lcm}
\begin{document}

\title{Proper locally spherical hypertopes of hyperbolic type}
 \thanks{This research was supported by NSERC Canada and DGAPA-UNAM Mexico}

\author[A. Montero]{Antonio Montero}
\address[A. Montero]{Institute of Mathematics, National Autonomous University of Mexico (IM UNAM), 04510 Mexico City, Mexico}
\email[A. Montero]{amontero@im.unam.mx}

\author[A.I. Weiss]{Asia Ivi\'c Weiss}
\address[A.I. Weiss]{Department of Mathematics and Statistics, York University, Toronto, Ontario M3J 1P3, Canada}
\email[A.I. Weiss]{weiss@yorku.ca}

\keywords{Regularity, thin geometries, hypermaps, hypertopes, abstract polytopes}

\subjclass[2010]{Primary: 52B15, 51E24, Secondary: 51G05}

\begin{abstract}
Given any irreducible Coxeter group $C$ of hyperbolic type with non-linear diagram and rank at least $4$, whose maximal parabolic subgroups are finite, we construct an infinte family of locally spherical regular hypertopes of hyperbolic type whose Coxeter diagram is the same as that of $C$.
\end{abstract}

\maketitle

\section{Historical overview and motivation}

This paper deals with combinatorial objects possessing large degree of symmetry whose structure has been motivated by the geometry of the polyhedra and, their higher dimensional analogues, the polytopes. 
Highly symmetric polyhedra have occupied mathematicians and non-mathematicians alike for a very long time. 
Their study has contributed to several areas of mathematics and have been used in other sciences (notably physics and chemistry). 
These beautiful structures also motivated numerous creations in applied and fine arts. 
Some of the five classical regular polyhedra, known as Platonic solids or regular convex polyhedra, have been known to late Neolithic people as along as 3,000 years ago. 
The first proof that there are only five of these objects, most likely due to Theaetetus, appears in the Book XIII of the Euclid's \emph{Elements} \cite{Euclid_2002_EuclidsElements}.

The five regular solids have non-intersecting congruent regular convex polygons as faces with the same arrangement of faces at each vertex. 
They can be seen as maps: cellular decompositions of a surface where each face is represented by a cellular region and where a number of regions can meet at a common corner, the vertex of the map. 
The regular polyhedra are globally spherical (meaning that their faces form a cellular decomposition of a sphere) and have highest possible symmetry. 
A polyhedron, with $p$-gonal faces and $q$-gonal vertex arrangements (and the associated map) is usually denoted by the Schläfli symbol $\{p,q\}$, called the \emph{type} of the polyhedron (map).
The five polyhedra are the tetrahedron of type $\{3,3\}$, the octahedron of type $\{3,4\}$, the cube of type $\{4,3\}$, the icosahedron of type $\{3,5\}$, and the dodecahedron of type $\{5,3\}$. 
Between 1850 and 1852 Schläfli extends the concept of a polyhedron to a higher dimensional object that was eventually named a polytope. 
His work was unappreciated at that time and only published posthumously in 1901 (see, for example \cite{Coxeter_1973_RegularPolytopes} by Coxeter). 
In the introduction to Chapter VII of \cite{Coxeter_1973_RegularPolytopes} Coxeter writes ``The historical remarks in $\S$ 7.x  are dominated by the name of one man, Schläfli, to whom practically all developments are due''.

Already in the 17th century other kinds of regular polyhedra were considered. 
The faces of these polyhedra could be non-convex congruent regular star polygons, or the faces (while still required to have the same arrangement at each vertex) could overlap. 
There are precisely four such objects as proved by Cauchy in 1912, and they are known as the Kepler-Poinsot polyhedra or star polyhedra. 
Combinatorially, the star polyhedron of type $\{5,5/2\}$ and its dual of type $\{5/2,5\}$ (here the symbol $5/2$ denotes a pentagram) are maps on a surface of genus 4. 
The remaining two star polyhedra $\{3,5/2\}$ and its dual $\{5/2,3\}$ are combinatorially equivalent respectively to an icosahedron and a dodecahedron and therefore are of spherical type. 

In 1937, in another direction, Coxeter also considered polyhedra with regular skew polygonal faces \cite{Coxeter_1937_RegularSkewPolyhedra}. 
In 1977 Grünbaum extends the definition of a regular polyhedron further to include polyhedra whose faces can be any kind of congruent regular polygons (see \cite{Gruenbaum_1977_RegularPolyhedraOld}).
In \cite{Gruenbaum_1994_PolyhedraHollowFaces} Grünbaum writes ``The Original Sin in the theory of polyhedra goes back to Euclid, and through Kepler, Poinsot, Cauchy, and many others continues to afflict all work on this topic''.  
Schläfli's ideas and the early work by  Grünbaum and Coxeter in extending the definition of a regular polytope was developed further by Danzer and Schulte \cite{DanzerSchulte_1982_RegulareInzidenzkomplexe.I}. 
This eventually resulted in a combinatorial concept of a regular abstract polytope (see \cite{McMullenSchulte_2002_AbstractRegularPolytopes}). 
The connection with the classical theory is very strong as the faces of an abstract polytope are required to form a partially ordered set. 
In \cite[Section 2E]{McMullenSchulte_2002_AbstractRegularPolytopes} it is proved that the automorphism groups of regular abstract polytopes are quotients of Coxeter groups with linear diagrams satisfying an intersection condition on its parabolic subgroups, and that every such group can be used to construct an abstract regular polytope. 

While each classical regular polytope is topologically a sphere, abstract regular polytopes can be of other topological types. Spherical regular abstract polytopes are isomorphic to classical regular polytopes and are associated with finite irreducible Coxeter groups with linear diagrams.
Toroidal abstract regular polytopes are associated with finite quotients of affine irreducible Coxeter groups with linear diagrams. 
These have been classified in rank 3 by Coxeter (see \cite[Sections 8.3 and 8.4]{CoxeterMoser_1972_GeneratorsRelationsDiscrete}) and in higher ranks by McMullen and Schulte (see \cite[Sections 6D and 6E]{McMullenSchulte_2002_AbstractRegularPolytopes}).

Motivated by the above ideas, in 2016 the concept of a regular polytope is extended further by Leemans, Fernandes, and the second author of this paper, to that of a regular hypertope. 
In \cite{FernandesLeemansWeiss_2016_HighlySymmetricHypertopes} they pushed the generalisation as far as it can possibly be done, while ensuring that the new objects locally have the natural structure essentially resembling that of a polytope. 
The essential difference is that it is not required that the combinatorial structure of a hypertope is that of a partially ordered set. 
More precisely, regular hypertopes are thin, residually connected incidence geometries with chamber transitive type-preserving automorphism groups. 
The abandonment of the partial order requires that we can not consider all automorphisms of the incidence geometry but must restrict to those preserving the types of the elements of the geometry. 
For details see \cref{sec:basics}. 
In \cite{FernandesLeemansWeiss_2016_HighlySymmetricHypertopes} it is proved that such an object can be constructed from a quotient of a Coxeter group endowed with certain combinatorial and geometric properties (see \cref{thm:CGroupFlagTrans_Hypertope} in this paper). 
When the Coxeter group has a linear diagram the resulting hypertope is a polytope, otherwise we say that the hypertope is proper.

Of particular interest again are the spherical and toroidal regular hypertopes that are associated with the finite and affine irreducible Coxeter groups, respectively. 
The proper toroidal hypertopes (with non-linear diagram) in rank $4$ have been classified by Ens in \cite{Ens_2018_Rank4Toroidal} and in higher ranks by Leemans, Weiss, and Schulte in a forthcoming paper \cite{LeemansSchulteWeiss_ToroidalHypertopes_preprint}. Both spherical and toroidal regular hypertopes  have spherical residues, meaning that if one or more of the generators of their type-preserving automorphism group are dropped the resulting hypertope of lower rank is spherical.

In \cite{FernandesLeemansWeiss_2020_ExplorationLocallySpherical} the concept of a locally spherical hypertope was introduced as a thin incidence geometry for which all its proper residues are spherical. 
The type-preserving automorphism group of a locally spherical regular hypertope is then seen to be a quotient of a finite irreducible, affine irreducible, or a compact hyperbolic Coxeter group. 
The classification of locally spherical hypertopes of spherical and euclidian type follows from the classification of the normal subgroups of finite and infinite irreducible Coxeter groups of euclidean type, respectively. 
Little is known about the remaining locally spherical regular hypertopes. 
They are obtained from quotients of compact hyperbolic Coxeter groups. 
A review of the know finite hypertopes (most of which are polytopes) of this type is presented in the paper and an attempt was made to construct finite (small) examples. 
The calculations in MAGMA led to very few examples because of the large size of the groups involved. 
The authors were unsuccessful in finding any finite examples for proper hypertopes in ranks $4$ and $5$. 
One example in rank $5$ with a Y-shape Coxeter diagram was subsequently found \cite{MonteroWeiss_2020_LocallySphericalHypertopes}. 

In \cref{sec:constructions} of this paper we give examples for each of the compact hyperbolic Coxeter groups with non-linear diagram. 
In fact, for each such group we construct an infinite family of hypertopes.
Our constructions extend the results in \cite[Section 4C5]{McMullenSchulte_2002_AbstractRegularPolytopes} on existence of infinitely many regular polytopes of hyperbolic type in ranks $4$ and $5$ to hypertopes. 
Furthermore, our results are constructive providing explicitly the type-preserving automorphisms groups of such hypertopes.
The constructions in ranks $4$ and $5$, described in \cref{sec:constructions}, are based on Schreier coset graphs (for a brief description of the graphs and the computational methods for handling them see \cite{Conder_2003_GroupActionsGraphs}) which are useful in forming large permutation representations of the groups. These in turn are related to the systematic enumeration of cosets derived by Coxeter and Todd  \cite{ToddCoxeter_1936_PracticalMethodEnumerating}. 
 \section{Construction of regular hypertopes from groups}\label{sec:basics}
Hypertopes are combinatorial objects which in a natural way extend both concepts of polytopes and hypermaps. 
They were introduced in \cite{FernandesLeemansWeiss_2016_HighlySymmetricHypertopes}  within the theory of incidence geometries.
Formally speaking, a \emph{hypertope} $\cH$ is an incidence system $(X, \ast, t, I)$ (where $X$ is the set of elements, $\ast$ is the incidence relation, $t$ is the type function and $I$ is the set of types), which is a thin, residually connected geometry.
We will assume familiarity with these concepts and direct the readers to \cite{BuekenhoutCohen_2013_DiagramGeometry} for definitions as well as basic examples of geometries. 

A \emph{flag} of $\cH$ is a set of mutually incident elements of the geometry. 
The \emph{rank} of a flag $F$ is $|F|$ and the type of $F$ is $\left\{ t(x) : x \in F \right\} \subset I $.
A maximal flag is called a \emph{chamber}. 
Since $\cH$ is a geometry, all its chambers have rank $|I|$.
Usually the types are denoted by non-negative integers so that $I=\{0, \dots, n-1\}$ in which case we say that $\cH$ is of rank $n$.
Observe that hypertopes of rank $2$ are essentially (abstract) polygons and those of rank $3$ are non-degenerate hypermaps (including maps as a special case).

An automorphism $\gamma:X\to X$ of $\cH$ is said to be \emph{ type-preserving} if $t(x \gamma) = t(x)$ for every $x \in X$.
The group of type-preserving automorphisms of $\cH$ is denoted by $\autI(\cH)$.
This group acts naturally on the set of chambers of $\cH$. 
A direct consequence of residual connectivity is that this action is free. 
A hypertope  $\cH$ is said to be \emph{regular} if the group $\autI(\cH)$ acts transitively on the set of its chambers. 
As a consequence of the thinness of $\cH$, for every chamber $C$ and every $i \in I$ there exists a unique chamber $C^{i}$ that differs form $C$ only in the element of type $i$. 
It follows that if $\cH$ is regular and $C$ is one of its chambers, for every $i \in I$ there exists a unique type-preserving automorphism $\rho_{i}$ that maps $C$ to $C^{i}$.

It is easy to see that $\autI(\cH)=\left\langle \rho_{i} : i \in I \right\rangle $ (see \cite[Section 4]{FernandesLeemansWeiss_2016_HighlySymmetricHypertopes}). 
The automorphisms $\rho_{0}, \dots, \rho_{n-1}$ satisfy the relations
\begin{equation}\label{eq:coxeter_rels}
    \begin{aligned}
    \rho_{i}^{2} &= \id && \text{for every $i \in I$},\\
    (\rho_{i}\rho_{j})^{p_{i,j}} &= \id && \text{for some } p_{i,j}\in \left\{ 2, 3, \dots, \infty \right\}. 
    \end{aligned}
\end{equation}

Let $G$ be a group generated by the elements $\rho_{0}, \dots, \rho_{n-1}$ that satisfy \cref{eq:coxeter_rels}. 
The \emph{Coxeter diagram} of $G$ is the complete graph with nodes labelled by the elements $\rho_{0}, \dots, \rho_{n-1}$ and the branch connecting the node $\rho_{i}$ with the node $\rho_{j}$ is labelled by $p_{i,j}$.
As usual, branches are omitted when $p_{i,j}=2$ and labels are omitted if $p_{i,j}=3$.

A regular hypertope is said to be \emph{irreducible} if the Coxeter diagram of its type-preserving automorphism group is connected (after removing the branches labelled by $2$).

The group defined only by the relations in \cref{eq:coxeter_rels} is called the \emph{(universal) Coxeter group} associated with the corresponding diagram.
Observe that the type-preserving automorphism group of a regular hypertope is a quotient of the Coxeter group associated with its diagram.
However, the converse is not always true, meaning that not every quotient of a Coxeter group is the type-preserving automorphism group of a regular hypertope. 
In general, we need additional conditions, which are discussed below.

Let $G$ be a group generated by the involutions $\left\{ \rho_{i} : i \in I \right\} $. 
For $J \subset I$, define $G_{J}=\left\langle \rho_{j} : j \in J \right\rangle $.
We say that $G$ is a \emph{C-group} if it satisfies that 
\begin{equation} \label{eq:IP}
    G_{J} \cap G_{K} = G_{J \cap K},
\end{equation}
for every $J,K \subset I$. This condition is called the \emph{intersection property}.

In general, proving that a given group satisfies the intersection property can be difficult. For $i,j \in I$, consider the subgroups $G_{i}= G_{I \sm \left\{ i \right\} }$ and $G_{i,j} = G_{I\sm \left\{ i,j \right\}  }$.
The following result offers a way to prove the intersection property in terms of these subgroups.

\begin{proposition}[{\cite[Proposition 6.1]{FernandesLeemans_2018_CGroupsHigh}}]\label{prop:IP} 	
    Let $G$ be a group generated by $n$ involutions  $\rho_0,\ldots, \rho_{n-1}$. 
    Suppose that $G_i$ is a C-group for every $i\in\{0,\ldots,n-1\}$. 
    Then $G$ is a C-group if and only if $G_i \cap G_j = G_{i,j}$ for all $0\leq i,\,j \leq n-1$.
\end{proposition}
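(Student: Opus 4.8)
The plan is to prove both directions directly, without induction on $n$; the ``only if'' direction is immediate and the ``if'' direction is where the work lies. For ``only if'', assuming $G$ is a C-group, apply the intersection property \cref{eq:IP} to $J = I\sm\{i\}$ and $K = I\sm\{j\}$:
\[
G_i \cap G_j = G_{I\sm\{i\}} \cap G_{I\sm\{j\}} = G_{(I\sm\{i\})\cap(I\sm\{j\})} = G_{I\sm\{i,j\}} = G_{i,j}.
\]
For ``if'', I would prove that $G_J \cap G_K = G_{J\cap K}$ for every $J,K \subset I$. The inclusion $G_{J\cap K} \subset G_J\cap G_K$ is always trivial, so only the reverse inclusion needs attention, and I would handle it by a case analysis on $J\cup K$.

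If $J = I$ (or, symmetrically, $K = I$) there is nothing to prove, since then $G_J\cap G_K = G_K = G_{J\cap K}$. So assume $J,K \subsetneq I$. If moreover $J\cup K \ne I$, choose $i \in I\sm(J\cup K)$; then $G_J$ and $G_K$ are both standard subgroups of $G_i$, and since $G_i$ is a C-group its intersection property gives $G_J\cap G_K = G_{J\cap K}$ directly.

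The remaining case, $J\cup K = I$ with $J,K\subsetneq I$, is the core of the argument. Here I would pick $i \in I\sm J$ and $j \in I\sm K$; since $(I\sm J)\cap(I\sm K) = I\sm(J\cup K) = \emptyset$, necessarily $i \ne j$. From $G_J \subset G_i$ and $G_K \subset G_j$ together with the hypothesis,
\[
G_J\cap G_K \subset G_i\cap G_j = G_{i,j} = G_{I\sm\{i,j\}}.
\]
Now $G_K$ and $G_{i,j}$ are both standard subgroups of the C-group $G_j$, so $G_K\cap G_{i,j} = G_{K'}$ where $K' := K\cap(I\sm\{i,j\})$; note $K' \subset I\sm\{i\}$. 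Because $G_J\cap G_K$ is already contained in $G_{i,j}$, we have
\[
G_J\cap G_K = (G_J\cap G_K)\cap G_{i,j} = G_J\cap(G_K\cap G_{i,j}) = G_J\cap G_{K'}.
\]
Finally, both $J$ and $K'$ lie in $I\sm\{i\}$, so $G_J$ and $G_{K'}$ are standard subgroups of the C-group $G_i$, whence $G_J\cap G_{K'} = G_{J\cap K'} = G_{J\cap K}$, the last equality because $J\cap K \subset I\sm\{i,j\}$ (as $i\notin J$ and $j\notin K$). This gives $G_J\cap G_K = G_{J\cap K}$, completing the proof.

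I expect the main obstacle to be exactly this last case. The decisive observation is that the hypothesis $G_i\cap G_j = G_{i,j}$ is precisely what is needed to force the intersection $G_J\cap G_K$ down into a common standard subgroup $G_{i,j}$, after which one bootstraps the intersection property already available in the proper standard subgroups $G_i$ and $G_j$ (used first inside $G_j$ to compute $G_K\cap G_{i,j}$, then inside $G_i$ to compute $G_J\cap G_{K'}$). Everything else is routine bookkeeping within the lattice of subsets of $I$.
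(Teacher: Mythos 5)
The paper does not actually prove this proposition: it is imported verbatim from Fernandes--Leemans (their Proposition~6.1), so there is no in-paper argument to compare against. Your proof is correct and complete: the ``only if'' direction is indeed immediate from the intersection property applied to $J=I\sm\{i\}$, $K=I\sm\{j\}$, and your case analysis for the converse is exhaustive ($J$ or $K$ equal to $I$; $J\cup K\neq I$, handled inside a single $G_i$; and $J\cup K=I$, handled by pushing $G_J\cap G_K$ into $G_{i,j}$ via the hypothesis and then bootstrapping the intersection property of $G_j$ and then $G_i$). The bookkeeping in the last case ($K'=K\sm\{i\}$, $J\cap K'=J\cap K$ because $i\notin J$ and $j\notin K$) checks out, and this is essentially the standard argument given in the cited source, so nothing further is needed.
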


It is known that the type-preserving automorphism group of a regular hypertope is a C-group (see \cite[Theorem 4.1]{FernandesLeemansWeiss_2016_HighlySymmetricHypertopes}). 
Every Coxeter group satisfies the intersection property (see \cite[Section 1.13]{Humphreys_1990_ReflectionGroupsCoxeter}).
In the special case where the hypertope is a polytope, the Coxeter diagram is linear and the intersection property is sufficient, meaning that for every C-group $G$ with linear diagram there exists a regular polytope $\cP$ such that $\aut(\cP)=G$ (see \cite[Theorem 2E11]{McMullenSchulte_2002_AbstractRegularPolytopes}).

We say that a regular hypertope is a \emph{proper} hypertope if it is not isomorphic to an abstract polytope; equivalently if its Coxeter diagram is not linear.
If $G$ is a group generated by involutions with a non-linear Coxeter diagram, the intersection property is not sufficient to guarantee the existence of a regular hypertope with $G$ as its type-preserving automorphism group (see \cite[Example 3.3]{FernandesLeemansWeiss_2020_ExplorationLocallySpherical}).  
Next we discuss a condition called {flag-transitivity}.
In \cite{FernandesLeemansWeiss_2016_HighlySymmetricHypertopes} it is shown that if this condition is satisfied by a C-group, the following construction introduced by Tits in \cite{Tits_2013_GroupesEtGeometries} leads to a regular hypertope.

Let $n$ be a positive integer, $I:=\{0,\ldots,n-1\}$ and $G$ a group generated by the involutions $\left\{ \rho_{i}: i \in I \right\}$.
For $i \in I$, let $G_{i}$ be the subgroup $\left\langle \rho_{j} : j \in I \sm \left\{ i \right\}  \right\rangle $, $X$  the set of cosets $G_ig$ with $g\in G$ and $i\in I$, and $t:X \to I$ defined by $t(G_ig)=i$. 
Define an incidence relation $\ast$ on $X\times X$ by: 
\[\begin{aligned}
G_ig_1 &\ast G_jg_2 &&\text{ if and only if }& G_ig_1\cap G_jg_2 &\neq \emptyset .
\end{aligned}\]
Then the $4$-tuple $\Gamma:=(X,\ast, t,I)$ is an incidence system having $\{G_{i} : i \in I\}$ as a chamber.
Moreover, the group $G$ acts by right multiplication as an automorphism group on $\Gamma$ and it is transitive on the flags of rank at most $2$.
An incidence system built this way is denoted by $\Gamma\left( G, \left( G_{i} \right)_{i \in I} \right)$.

If $G$ is the type-preserving automorphism group of a regular hypertope $\cH$, then $\Gamma\left( G, \left( G_{i} \right)_{i \in I} \right) \cong \cH$. 
Note that since $\cH$ is regular, $G$ acts transitively on the set of flags of $\cH$ with the same type and we say that $G$ is \emph{flag-transitive}. 
This property is sufficient to characterise the type-preserving automorphism groups of the regular hypertopes. 
More precisely, we have the following result.

\begin{theorem}[{\cite[Theorem 4.6]{FernandesLeemansWeiss_2016_HighlySymmetricHypertopes}}]\label{thm:CGroupFlagTrans_Hypertope}
    Let $I= \{0, \dots, n-1\}$, let $G=\langle \rho_i\,|\,i\in I\rangle$ be a C-group, and let $\Gamma := \Gamma(G;(G_i)_{i\in I})$ where $G_i:=\langle \rho_j\,|\,j\neq i\rangle$ for all $i\in I$.
    If $G$ is flag-transitive on $\Gamma$, then $\Gamma$ is a regular hypertope.
\end{theorem}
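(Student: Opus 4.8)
The plan is to check, for $\Gamma = \Gamma(G;(G_i)_{i\in I})$, the three defining features of a hypertope --- that it is a geometry, that it is thin, and that it is residually connected --- and then to observe that chamber-transitivity of $\autI(\Gamma)$ comes for free. Throughout I would use that, by the Tits construction recalled above, $\Gamma$ is an incidence system with $\{G_i : i\in I\}$ as a chamber on which $G$ acts by right multiplication as a group of type-preserving automorphisms, and that the intersection property yields $\bigcap_{j\in J} G_{I\sm\{j\}} = G_{I\sm J}$ for any $J\subset I$ (in the notation $G_i = G_{I\sm\{i\}}$; in particular $\bigcap_{i\in I}G_i = \{\id\}$). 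To see that $\Gamma$ is a geometry, note that for $J\subset I$ the set $\{G_j : j\in J\}$ is a flag of type $J$ contained in the base chamber; by flag-transitivity every flag of type $J$ is a $G$-image of it, hence lies in a $G$-image of the base chamber, i.e.\ in a chamber. So every flag extends to a chamber, $\Gamma$ is a geometry, and each chamber has type $I$ with a unique element of each type --- a fact I use below without further comment.

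For thinness it suffices, by chamber-transitivity, to work with the base chamber $C_0 = \{G_i : i\in I\}$ and a fixed type $i$; set $F = C_0\sm\{G_i\}$. The observation to make is that in a flag-transitive geometry the stabiliser of a flag acts transitively on the chambers containing it: if $C_1 g = C_2$ with $C_1,C_2\supseteq F$, then $Fg$ and $F$ are both the subflag of $C_2$ of the same type as $F$, so $g$ fixes $F$. One then computes, from the intersection property, $\stab_G(F) = \bigcap_{j\neq i}G_j = \langle\rho_i\rangle$ and $\stab_G(C_0) = \bigcap_{j\in I}G_j = \{\id\}$. Hence $\langle\rho_i\rangle$ acts transitively on the chambers through $F$ with trivial stabiliser at $C_0$, so there are exactly $|\langle\rho_i\rangle| = 2$ of them (the $\rho_i$ being non-trivial involutions), the second being $C_0\rho_i$. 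This is thinness.

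For residual connectivity, let $R$ be a residue of rank at least $2$. After translating by $G$ (flag-transitivity) its defining flag may be assumed to be $\{G_j : j\in J\}$ with $|I\sm J|\geq 2$, and the standard description of residues in a flag-transitive coset geometry (see \cite{BuekenhoutCohen_2013_DiagramGeometry}), together with $\bigcap_{j\in J}G_j = G_{I\sm J}$, identifies $R$ with $\Gamma\bigl(G_{I\sm J};\,(G_{(I\sm J)\sm\{i\}})_{i\in I\sm J}\bigr)$. The incidence graph of a coset geometry is connected once the defining subgroups generate the whole group, and here the groups $G_{(I\sm J)\sm\{i\}}$, $i\in I\sm J$, jointly contain every $\rho_k$ with $k\in I\sm J$ --- which is exactly where $|I\sm J|\geq 2$ is used --- so they generate $G_{I\sm J}$. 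The same argument with $J = \emptyset$ shows $\Gamma$ itself is connected, so $\Gamma$ is residually connected.

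Assembling the pieces, $\Gamma$ is a thin, residually connected geometry on which $G\leq\autI(\Gamma)$ acts transitively on chambers, so $\autI(\Gamma)$ does as well and $\Gamma$ is a regular hypertope. The step I expect to demand the most care is turning ``flag-transitive'' into workable statements: the two reductions above --- chambers through a corank-$1$ flag forming one orbit of its stabiliser, and higher residues being coset geometries of the parabolics $G_{I\sm J}$ --- and then, in each, invoking the C-group hypothesis (the intersection property) to determine those stabilisers and parabolic subgroups explicitly.
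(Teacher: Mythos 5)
The paper does not actually prove this statement---it is imported verbatim from \cite[Theorem 4.6]{FernandesLeemansWeiss_2016_HighlySymmetricHypertopes}---and your argument is correct and follows the same standard route as that source: flag-transitivity makes $\Gamma$ a geometry and lets you identify residues with coset geometries of the parabolics, while the intersection property pins down the stabiliser of a corank-one flag as $\langle\rho_{i}\rangle$ of order $2$ (thinness) and shows each $G_{I\sm J}$ is generated by its sub-parabolics (residual connectedness). The two facts you delegate to \cite{BuekenhoutCohen_2013_DiagramGeometry}---the residue description for flag-transitive coset geometries and the ``subgroups generate iff connected'' criterion---are precisely the ingredients the original proof also borrows, so nothing essential is missing.
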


It is known that every Coxeter group is flag transitive, hence the type-preserving automorphism group of a regular hypertope (see \cite[Section 3]{Tits_2013_GroupesEtGeometries}). Hence, with each Coxeter diagram we can associate the \emph{universal regular hypertope} built from the corresponding Coxeter group.

The following two results will be used to determine whether a given C-group is flag-transitive.

\begin{theorem}[{\cite[Theorem 1.8.10 $(iii)$ ]{BuekenhoutCohen_2013_DiagramGeometry}}] \label{thm:FT_rank3}
    Let $n \geq 4$ and $I=\{0, \dots, n-1\}$. 
    Let $\Gamma=\Gamma(G,G_{i})_{i \in I}$ be the incidence system of $G$ over $(G_{i})_{i \in I}$. 
    Then $\Gamma$ is flag-transitive if and only if for each subset $J$ of $I$ of size three, the group $G$ is transitive on the set of flags of type $J$, and for each $i \in I$ the subgroup $G_{i}$ is flag-transitive on $\Gamma(G_{i},G_{i,j})_{j \in I\sm\{i\}}$. 
\end{theorem}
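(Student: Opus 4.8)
The plan is to recast flag-transitivity of a coset geometry as a ``Helly-type'' condition on cosets, and then to prove transitivity on flags of type $J$ by induction on $|J|$, with the size-three hypothesis serving as the base case and the hypotheses on the subgroups $G_i$ supplying the inductive step. Two standard facts about $\Gamma=\Gamma(G,(G_i)_{i\in I})$ underpin everything: it is automatically transitive on flags of rank at most $2$ (already noted above), and $G$ is transitive on flags of a fixed type $J\subseteq I$ if and only if every family $(G_jg_j)_{j\in J}$ of pairwise incident cosets, one of each type in $J$, admits a common element $z\in\bigcap_{j\in J}G_jg_j$; such a $z$ is precisely an element of $G$ carrying the standard flag $\{G_j:j\in J\}$ onto $(G_jg_j)_{j\in J}$.

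For the forward implication there is then little to do. Flags of type $J$ with $|J|=3$ are in particular flags of $\Gamma$, so $G$ is transitive on them; and the residue of the standard element $G_i$ of type $i$ is a coset geometry on which $G_i$ acts, inheriting flag-transitivity from $\Gamma$ since residues of flag-transitive geometries are flag-transitive, and this residue is $\Gamma(G_i,(G_{i,j})_{j\neq i})$.

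For the converse I would induct on $|J|$, showing $G$ is transitive on flags of type $J$. The cases $|J|\leq 2$ are automatic and $|J|=3$ is the first hypothesis. Suppose $|J|=k\geq 4$ and the claim holds for all smaller types. Take a flag $F=\{f_j:j\in J\}$ and fix distinct $j_0,j_1\in J$. The subflag $\{f_j:j\in J\setminus\{j_1\}\}$ has type of size $k-1\geq 3$, so by induction some $g\in G$ satisfies $f_jg=G_j$ for all $j\in J\setminus\{j_1\}$; then $Fg=\{G_j:j\in J\setminus\{j_1\}\}\cup\{f_{j_1}g\}$, with $f_{j_1}g$ incident to every $G_j$, $j\neq j_1$, in particular to $G_{j_0}$. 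Hence $\{G_j:j\in J\setminus\{j_0,j_1\}\}\cup\{f_{j_1}g\}$ is a flag of type $J\setminus\{j_0\}$ in the residue of $G_{j_0}$, which by the second hypothesis (via its identification with $\Gamma(G_{j_0},(G_{j_0,j})_{j\neq j_0})$) is flag-transitive under the action of $G_{j_0}$. So there is $h\in G_{j_0}$ with $h\in G_{j_0}\cap G_j$ for $j\in J\setminus\{j_0,j_1\}$ and $f_{j_1}gh=G_{j_1}$. Since $h\in G_{j_0}$ also fixes $G_{j_0}$, we get $F(gh)=\{G_j:j\in J\}$, so $F$ lies in the orbit of the standard flag of type $J$, completing the induction.

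The step I expect to demand real care is the identification of the residue of $G_i$ in $\Gamma$ with the coset geometry $\Gamma(G_i,(G_{i,j})_{j\neq i})$: both have the cosets of $G_i\cap G_j$ in $G_i$ as their type-$j$ elements (so one wants, in addition, $G_i\cap G_j=G_{i,j}$), but the incidence relation inherited from $\Gamma$ is a priori weaker than the one defined inside $G_i$, and reconciling the two is exactly where the hypothesis of transitivity on flags of size three, applied to triples containing the type $i$, is needed. Once this identification is in hand — as it is, for instance, whenever the maximal subgroups $G_i$ satisfy the intersection property — both implications go through as sketched.
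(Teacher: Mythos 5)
The paper offers no proof of this statement---it is imported verbatim from Buekenhout and Cohen---so there is nothing internal to compare against; your argument is essentially the standard one from that source (induction on the rank of the flag type, with the rank-three hypothesis as base case and residual flag-transitivity of the $G_i$ driving the inductive step), and it is correct. The one point to repair is your closing parenthetical: the identification of the residue of $G_i$ with $\Gamma(G_i,(G_{i,j})_{j\neq i})$ does \emph{not} follow from the intersection property, which only matches up the element sets; what makes the two incidence relations agree is exactly the hypothesis you name just before it, namely transitivity on flags of type $\{i,j,k\}$, which by \cref{lem:FT_Group} says that the pairwise-intersecting cosets $G_i$, $G_jh$, $G_kh'$ (with $h,h'\in G_i$) have a common element $z$, whereupon $zh^{-1}\in G_i\cap G_j$ and $z(h')^{-1}\in G_i\cap G_k$ exhibit $z\in (G_i\cap G_j)h\cap(G_i\cap G_k)h'$. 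With that short computation written out, both directions of your proof close up; that the intersection property alone does not secure this is precisely the phenomenon behind the paper's citation of \cite[Example 3.3]{FernandesLeemansWeiss_2020_ExplorationLocallySpherical}.
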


\begin{lemma}[{\cite[Lemma 4.2]{FernandesLeemansWeiss_2016_HighlySymmetricHypertopes}}]\label{lem:FT_Group}
    Let $H$, $K$, $Q$ be three subgroups of a group $G$. 
    Then the following conditions are equivalent.
    \begin{enumerate}
    \item\label{item:FT_Group1} $Q\cdot H \cap Q\cdot K =Q \cdot (H\cap K)$
    \item\label{item:FT_Group2} $(Q\cap H) \cdot (Q \cap K) =Q\cap ( H\cdot K)$
    \item\label{item:FT_Group3} If the three cosets $Qx$, $Hy$ and $Kz$ have pairwise nonempty intersections, then $Qx \cap Hy \cap Kz \neq \emptyset$.
    \end{enumerate}
\end{lemma}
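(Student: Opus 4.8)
The plan is to establish the two equivalences $(1)\Leftrightarrow(2)$ and $(1)\Leftrightarrow(3)$. I would begin by recording the ``easy'' inclusions, which hold in any group and require no hypothesis: since $H\cap K\subseteq H$ and $H\cap K\subseteq K$, we get $Q\cdot(H\cap K)\subseteq QH\cap QK$, and similarly $(Q\cap H)\cdot(Q\cap K)\subseteq Q\cap(H\cdot K)$. Thus condition $(1)$ reduces to the single inclusion $QH\cap QK\subseteq Q(H\cap K)$ and condition $(2)$ to $Q\cap HK\subseteq(Q\cap H)(Q\cap K)$; these are the statements I would actually verify.

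For $(1)\Leftrightarrow(2)$ I would argue by a direct element chase. To get $(1)\Rightarrow(2)$, take $q\in Q\cap HK$ and write $q=hk$ with $h\in H$, $k\in K$; then $h=qk^{-1}$ lies in $QK$ (as $q\in Q$, $k^{-1}\in K$) and trivially in $QH$, so $(1)$ lets me write $h=q_{0}m$ with $q_{0}\in Q$ and $m\in H\cap K$. One then checks that $q_{0}=hm^{-1}\in Q\cap H$ and $mk=q_{0}^{-1}q\in Q\cap K$, so $q=q_{0}(mk)\in(Q\cap H)(Q\cap K)$. The converse $(2)\Rightarrow(1)$ is the mirror image: from $g=q_{1}h=q_{2}k\in QH\cap QK$ one has $q_{1}^{-1}q_{2}=hk^{-1}\in Q\cap HK=(Q\cap H)(Q\cap K)$ by $(2)$, so $q_{1}^{-1}q_{2}=ab$ with $a\in Q\cap H$, $b\in Q\cap K$; setting $m=a^{-1}h=bk^{-1}\in H\cap K$ gives $g=(q_{1}a)m\in Q(H\cap K)$. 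Both directions are routine bookkeeping.

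For $(1)\Leftrightarrow(3)$ the key observation is the reformulation $g\in QH\iff Qg\cap H\neq\emptyset$ (and likewise for $K$ and for $H\cap K$), which recasts $(1)$ as the statement: \emph{for every right coset $Qx$, if $Qx\cap H\neq\emptyset$ and $Qx\cap K\neq\emptyset$, then $Qx\cap H\cap K\neq\emptyset$}. Granting this, $(3)\Rightarrow(1)$ is immediate, being exactly the special case $y=z=e$ of $(3)$ (the remaining pairwise condition $H\cap K\neq\emptyset$ holding automatically since $e\in H\cap K$). For $(1)\Rightarrow(3)$, given cosets $Qx$, $Hy$, $Kz$ with pairwise nonempty intersections, I would pick $w\in Hy\cap Kz$, so $Hy=Hw$ and $Kz=Kw$, and then right-translate all three cosets by $w^{-1}$; this is a bijection of $G$ preserving all intersections and reduces the situation to the case $Hy=H$, $Kz=K$, where the reformulation of $(1)$ applies verbatim to the coset $Qxw^{-1}$. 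Translating the resulting common point back by $w$ gives an element of $Qx\cap Hy\cap Kz$.

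The only step that is not purely mechanical is this last normalization in $(1)\Rightarrow(3)$: the idea is to use a common element of $Hy\cap Kz$ to move two of the three cosets onto the subgroups $H$ and $K$ themselves, which is precisely the configuration that condition $(1)$ governs. Everything else amounts to chasing elements through products of cosets.
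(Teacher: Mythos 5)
Your argument is correct and complete: the two ``easy'' inclusions are rightly identified, the element chases for $(1)\Leftrightarrow(2)$ work, and the reformulation $g\in QH\iff Qg\cap H\neq\emptyset$ together with the translation by $w\in Hy\cap Kz$ cleanly handles $(1)\Leftrightarrow(3)$. Note that this paper only quotes the lemma from \cite[Lemma 4.2]{FernandesLeemansWeiss_2016_HighlySymmetricHypertopes} without proof, so there is nothing internal to compare against; your proof is the standard coset-chasing one. One typo: in $(2)\Rightarrow(1)$, from $hk^{-1}=ab$ you get $m=a^{-1}h=bk$ (not $bk^{-1}$), which still lies in $H\cap K$, so the conclusion is unaffected.
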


 \section{Locally spherical regular hypertopes of hyperbolic type}\label{sec:lsrhht}
Let $\cH$ be a regular hypertope of rank $n$, $I=\left\{ 0, \dots, n-1 \right\} $ and let $G$ be its type-preserving automorphism group.
Following the notation in \cref{sec:basics}, when $G=\left\langle \rho_{i} : i \in I \right\rangle $, for every $i,j \in I$ we define $G_{i} = \left\langle \rho_{j} : j \in I \sm \left\{ i \right\}  \right\rangle $ and $G_{i,j} = \left\langle \rho_{k} : k \in I \sm \left\{ i,j \right\}  \right\rangle $. 
The \emph{maximal residue of $\cH$ of type $i$} is the incidence system $\Gamma\left( G_{i}, \left( G_{i,j} \right)_{j \in I \sm \left\{ i \right\} } \right)$. 
The group $G_{i}$ is a flag-transitive C-group, hence all the maximal residues of a regular hypertope are regular hypertopes themselves.
Observe that for regular hypertopes this definition is equivalent to the one introduced in \cite{FernandesLeemansWeiss_2016_HighlySymmetricHypertopes}.

A universal regular hypertope is \emph{spherical} if its Coxeter diagram is a union of diagrams of finite irreducible Coxeter groups. 
A \emph{locally spherical regular hypertope} is a hypertope whose maximal residues are spherical hypertopes.
An irreducible locally spherical regular hypertope is of \emph{hyperbolic type} if its Coxeter diagram is the same as that of an irreducible compact hyperbolic Coxeter group, that is,  a group generated by hyperbolic reflections with a compact fundamental domain.
Compact hyperbolic Coxeter groups exist only in ranks $3$, $4$ and $5$ (see \cite[Section 6.9]{Humphreys_1990_ReflectionGroupsCoxeter}) and they are listed in \cite[Table 2]{FernandesLeemansWeiss_2020_ExplorationLocallySpherical}.
In this paper we only consider ranks $4$ and $5$ since, as mentioned before, rank $3$ hypertopes are non-degenerate hypermaps for which there are several known constructions (see \cite{Conder_2009_RegularMapsHypermaps,CornSingerman_1988_RegularHypermaps,JonesSingerman_1994_MapsHypermapsTriangle}, for example).

When the Coxeter diagram associated with a hypermap is a triangular graph with edges labelled $l, m, n$ the hypermap is said to be of \emph{type} $(l,m,n)$.
In \cite{FernandesLeemansWeiss_2020_ExplorationLocallySpherical} this notation was extended to hypertopes of ranks $4$ and $5$. 
We follow that notation and say that a hypertope of rank $n$ is of type $(p_{1}, \dots, p_{n})$ when its Coxeter diagram is a cycle with successive edges labelled by $p_{1}, \dots, p_{n}$.
Hypertopes described in \cref{sec:5-33,sec:53-33} are naturally associated with semiregualr polytopes (see \cite{Coxeter_1973_RegularPolytopes}).
We denote the type of such hypertopes using Coxeter's notation.

A survey of known locally spherical regular polytopes of hyperbolic type can be found in \cite[Section 6]{FernandesLeemansWeiss_2020_ExplorationLocallySpherical}. 
In the same paper a computational attempt to construct examples of proper hypertopes of such type was made.
Just few examples were found as the groups involved seemed to be large.
The results are summarised in \cite[Table 3]{FernandesLeemansWeiss_2020_ExplorationLocallySpherical}.
Notably, the authors of that work could not find a finite example of a proper locally spherical regular hypertope of rank $5$.
Recently the authors of this manuscript found one finite such example in \cite{MonteroWeiss_2020_LocallySphericalHypertopes}
with large type-preserving automorphism group. 
In  this paper we use CPR-graphs (discussed in next section) to build infinite families of locally spherical regular hypertopes for each Coxeter diagram of hyperbolic type of ranks $4$ and $5$.
Most of the groups involved are fairly large.

 \section{CPR-Graphs} \label{sec:cpr_graphs}
In this section we introduce the notion of a CPR-graph, which will be our main tool for building the type-preserving automorphism groups of regular hypertopes. 
The term “CPR-graph” stands for (string) C-group permutation representation of a graph. 
The graphs, introduced by Pellicer in \cite{Pellicer_2008_CprGraphsRegular}, were used to build regular polytopes with symmetric and alternating groups.
CPR-graphs have proved to be a powerful tool in building highly symmetric polytopes. 
In \cite{Pellicer_2009_ExtensionsRegularPolytopes, Pellicer_2010_ExtensionsDuallyBipartite} the graphs were used to build regular polytopes with prescribed Schläfli symbol. 
In \cite{PellicerWeiss_2010_GeneralizedCprGraphs} Pellicer and Weiss generalise CPR-graphs to GPR-graphs (generalised permutation representation graphs) and use this generalisation to build chiral polytopes of small rank. 
GPR-graphs were also used in \cite{Pellicer_2010_ConstructionHigherRank} to show the   existence of chiral polytopes of arbitrary rank.
In \cite{CunninghamPellicer_2014_ChiralExtensionsChiral} Cunningham and Pellicer used GPR-graphs to construct chiral polytopes with prescribed chiral facets.
Recently Pellicer, Toledo and Poto\v{c}nik used GPR-graphs to build $2$-orbit maniplexes for every rank and every symmetry type. 
In another direction in \cite{FernandesPiedade_2019_FaithfulPermutationRepresentations} Fernandes and Piedade classify the CPR-graphs of regular maps on tori and in \cite{FernandesPiedade_2020_DegreesToroidalRegular_preprint} they extend those results to regular toroidal hypermaps.

CPR-graphs admit a natural generalisation for C-groups with non-linear Coxeter diagram.

Let $G$ be a group generated by involutions $\rho_{0}, \rho_{1} \dots, \rho_{n-1}$.
Assume that $\pi: G \to S_{m}$ is an embedding of $G$ into a symmetric group $S_{m}$ for a certain $m \in \bN$.
The \emph{CPR-graph} associated with $\pi$ is the edge-coloured graph whose vertex set is $\left\{ 1, \dots, m \right\} $ and such that there is an edge of colour $i$ (for $i \in \left\{ 0, \dots, n-1 \right\} $) connecting the vertices $x$ and $y$ if and only if $x \pi(\rho_{i}) = y$.  

Usually, the embedding $\pi$ is given by a known faithful action of the group $G$ on a certain set with $m$ elements and can be omitted.

Observe that the action of $G$ on $\left\{ 1, \dots, m \right\} $ is transitive if and only if the corresponding CPR-graph $\cX$ is connected. 
In this situation, the stabiliser $S \leq G$ of a vertex has index $m$ and the graph $\cX$ is isomorphic to the Schreier coset graph induced by $S$ (see \cite{Schreier_1927_DieUntergruppenDer, ToddCoxeter_1936_PracticalMethodEnumerating} and \cite[Proposition 3.10]{Pellicer_2008_CprGraphsRegular}).
However, in general CPR-graphs do not need to be connected.

Observe that if $\cX$ is a CPR-graph of $G$, then for every $i \in \left\{ 0, \dots, n-1 \right\} $, the edges of colour $i$ form a matching $M_{i}$.
We can recover $G$ from $\cX$ as a permutation group by defining $\rho_{i}$ as the involution given by swapping the endpoints of the edges of the matching $M_{i}$.

Inspired by the previous observation and following \cite{Pellicer_2008_CprGraphsRegular}, we say that a (multi) graph $\cX$ is a \emph{ proper $n$-edge-coloured graph} if the edges of colour $i$ form a non-empty matching $M_{i}$ for every $i \in \left\{ 0, \dots n-1 \right\} $ and such that if $M_{i} \neq M_{j}$ if $i \neq j$.

Observe that every proper $n$-edge-coloured graph defines a permutation group $G$ generated by the involutions given by swapping the endpoints of the edges of each matching $M_{i}$.
In \cref{sec:constructions} we build an infinite family of proper $n$-edge-coloured graphs for every non-linear Coxeter diagram of hyperbolic type. 
Then we prove that the induced permutation groups are the type-preserving automorphism groups of regular hypertopes, obtaining as a consequence an infinite family of proper locally spherical regular hypertopes for each hyperbolic type.

To finish this section we prove some results that will be useful in showing that the induced permutation group satisfy the required relations as well as the intersection property and flag transitivity condition required to be the type-preserving automorphism group of a regular hypertope.

The following remark is obvious. This is essentially \cite[Lemma 5.3]{MonsonPellicerWilliams_2014_MixingMonodromyAbstract} in the language of CPR-graphs.

\begin{remark}\label{rem:mix}
	Let $G$ and $H$ be the permutation groups induced by the proper $n$-edge-coloured graphs $\cX$ and $\cY$, respectively. 
	Assume that $H$ is a quotient of $G$ mapping distinguished generators to distinguished generators.
	Then the permutation group induced by the disjoint union of $\cX$ and $\cY$ is isomorphic to $G$.
\end{remark}

Let $\cX$ be a proper $n$-edge-colured graph with colour set $I=\left\{ 0, \dots, n-1 \right\} $.
If $J \subset I $, a \emph{$J$-component} is a connected component of the subgraph of $\cX$ induced by the edges of colours in $J$. 
Note that if $\left\{ i,j \right\} \subset I $, then every $\left\{ i,j \right\} $-component is either an alternating path or an alternating cycle.

\begin{lemma}\label{lem:polygonalAction}
    Let $\cX$ be a proper $n$-edge-coloured graph with colour-set $I = \left\{ 0, \dots, n-1 \right\} $. 
    For $i \in I $, let $\rho_{i}$ denote the permutation of the vertices of $\cX$ induced by the matching $M_{i}$. 
    Let $p_{1}, \cdots p_{r}, q_{1}, \dots, q_{s} \in \bN$ such that all the $\left\{ i,j \right\} $-components of $\cX$ are alternating paths with $p_{k}$ or alternating cycles with $2q_{l}$ vertices for some $1\leq k \leq r$ and $1 \leq l \leq s$. Then, the period of $\rho_{i} \rho_{j}$ is $\lcm\left( p_{1}, \cdots p_{r}, q_{1}, \dots, q_{s} \right)$.
\end{lemma}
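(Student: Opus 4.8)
The plan is to localise the computation to a single $\{i,j\}$-component of $\cX$ and then run a short parity analysis on a path and on a cycle. First I would note that both $\rho_i$ and $\rho_j$ preserve every $\{i,j\}$-component setwise: for a vertex $x$, each of $x\rho_i$ and $x\rho_j$ is either $x$ itself or is joined to $x$ by an edge of colour $i$ or $j$, hence lies in the same $\{i,j\}$-component as $x$. Therefore $\rho_i\rho_j$ acts on $\cX$ componentwise, i.e.\ it is the disjoint union of the permutations $(\rho_i\rho_j)|_Y$ as $Y$ runs over the $\{i,j\}$-components, and so its period equals the least common multiple of the periods of these restrictions. (Here I use the fact recalled just before the statement that each $Y$ is an alternating path or an alternating cycle.) Thus it suffices to prove that $(\rho_i\rho_j)|_Y$ has period $m$ when $Y$ is an alternating path on $m$ vertices, and period $q$ when $Y$ is an alternating cycle on $2q$ vertices.

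To prove this local claim I would fix $Y$, list its vertices along the path (or around the cycle) as $u_1,u_2,\dots$, and use that consecutive edges alternate in colour. After possibly interchanging the names of $i$ and $j$ — which replaces $\rho_i\rho_j$ by its inverse and hence leaves the period unchanged — I may assume the edge $u_ku_{k+1}$ has colour $i$ for $k$ odd and colour $j$ for $k$ even. Then, since $M_i$ and $M_j$ are matchings, $\rho_i|_Y$ is the product of the transpositions $(u_{2b-1}\,u_{2b})$ and $\rho_j|_Y$ the product of the transpositions $(u_{2b}\,u_{2b+1})$ (indices read modulo $2q$ in the cyclic case), every vertex index not appearing being a fixed point. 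Tracking the images of odd- and even-indexed vertices under $(\rho_i\rho_j)|_Y$ (permutations acting on the right, as elsewhere in the paper) then shows: in the path case $(\rho_i\rho_j)|_Y$ is the single $m$-cycle that lists the odd-indexed vertices in increasing order and then the even-indexed vertices in decreasing order, so it has period $m$ regardless of the parity of $m$; and in the cycle case $(\rho_i\rho_j)|_Y$ is a product of two disjoint $q$-cycles, one on the odd-indexed and one on the even-indexed vertices, so it has period $q$. Feeding this back into the first paragraph, the period of $\rho_i\rho_j$ on $\cX$ is the least common multiple of the integers $m$ coming from the path components together with the integers $q$ coming from the cycle components; under the hypothesis that the path components have $p_1,\dots,p_r$ vertices and the cycle components $2q_1,\dots,2q_s$ vertices (each listed value being realised), this equals $\lcm(p_1,\dots,p_r,q_1,\dots,q_s)$.

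The only real content is the parity bookkeeping in the middle step: the two endpoints of a path are fixed by exactly one of $\rho_i,\rho_j$, and which one depends on $m \bmod 2$; in the cyclic case the wrap-around edge $u_{2q}u_1$ must be assigned the colour that makes the alternating pattern close up consistently; and one should check that the degenerate components are covered by the same formulas, namely an isolated vertex ($m=1$, trivial restriction) and a lone monochromatic edge ($m=2$, the restriction being a transposition). None of this is deep, but it is exactly where an off-by-one in the indexing can creep in, so I would simply write out the small cases ($m$ even, $m$ odd, and the cycle) explicitly and verify the stated cycle structures; equivalently one may observe that $\langle\rho_i,\rho_j\rangle|_Y$ acts as a dihedral group on $Y$ and read the period of $\rho_i\rho_j$ off from that, but the direct computation is just as short.
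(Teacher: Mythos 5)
The paper states this lemma without proof (it is treated as routine), so there is no authorial argument to compare against; your proposal supplies a correct and complete one. The component-wise reduction, the explicit cycle structures (a single $m$-cycle on an alternating path with $m$ vertices, two disjoint $q$-cycles on an alternating cycle with $2q$ vertices), and the degenerate cases all check out, and you rightly flag the implicit hypothesis that each listed $p_k$ and $q_l$ is realised by some component, without which the stated $\lcm$ would only be an upper bound on the period.
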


\begin{lemma}\label{lem:IP_CPR}
    Let $\cX$ be a proper $n$-edge-coloured graph with colour-set $I = \left\{ 0, \dots, n-1 \right\} $. 
    For $i \in I $, let $\rho_{i}$ denote the permutation of the vertices of $\cX$ induced by the edges of colour $i$. 
    Denote by $G$ the group $\left\langle \rho_{i} : i \in I  \right\rangle $ and for $i,j \in I $ let $G_{i}$ and $G_{i,j}$ denote the subgroups $\left\langle \rho_{k}: k \in I \sm \left\{ i \right\}  \right\rangle $  and $\left\langle \rho_{k}: k \in I \sm \left\{ i,j \right\}  \right\rangle $, respectively. 
    Assume that for every $i$ the group $G_{i}$ satisfies the intersection  property.
    If for every $i,j \in G$ there exists a vertex $x$ of $\cX$ such that 
    \begin{equation}\label{eq:IP_CPR}
    |x G_i \cap xG_j | |\stab_{G_i}(x) \cap  \stab_{G_j}(x)| \leq |G_{i,j}|,
    \end{equation}
    then $G$ satisfies the intersection property.
    In particular, the condition in \cref{eq:IP_CPR} holds if 
    \begin{equation}\label{eq:IP_gcd}
    |x G_i \cap xG_j | \gcd\left(|\stab_{G_i}(x)|,|\stab_{G_j}(x)|\right) \leq |G_{i,j}|.
    \end{equation}
\end{lemma}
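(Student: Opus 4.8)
The plan is to verify the hypotheses of \cref{prop:IP}: since we already assume that each $G_i$ is a C-group, it suffices to show that $G_i \cap G_j = G_{i,j}$ for all $i,j \in I$. The inclusion $G_{i,j} \subseteq G_i \cap G_j$ is immediate from the definitions, so the work is entirely in the reverse inclusion, and this counting inequality is exactly what will deliver it. First I would fix $i,j$ and the vertex $x$ furnished by the hypothesis, and consider the action of $G$ on the vertex set of $\cX$. The key observation is the orbit–stabiliser bookkeeping inside $G_i$ and $G_j$ separately: writing $S_i := \stab_{G_i}(x)$ and $S_j := \stab_{G_j}(x)$, the orbit $xG_i$ has size $[G_i : S_i]$ and similarly for $j$. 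Since $S_i \cap S_j$ contains $\stab_{G_{i,j}}(x)$ and $G_{i,j} \leq G_i \cap G_j$, I would first reduce, if possible, to the case where the actions are such that the relevant stabilisers in $G_{i,j}$ can be controlled — but more robustly, I would argue directly with indices.

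The heart of the argument is the following chain. We always have $G_{i,j} \leq G_i \cap G_j$; to get equality it is enough to show $|G_i \cap G_j| \leq |G_{i,j}|$ (when these are finite, which holds here since maximal residues are spherical, or else one argues with indices in $G_i$). Now consider the orbit of $x$ under $G_i \cap G_j$. On one hand this orbit is contained in $xG_i \cap xG_j$, because any element of $G_i\cap G_j$ moves $x$ inside both $xG_i$ and $xG_j$; hence $|x(G_i\cap G_j)| \leq |xG_i \cap xG_j|$. On the other hand, $\stab_{G_i \cap G_j}(x) = S_i \cap S_j$, so by orbit–stabiliser $|G_i \cap G_j| = |x(G_i\cap G_j)|\cdot|S_i \cap S_j| \leq |xG_i \cap xG_j|\cdot|S_i\cap S_j|$. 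Combining with \cref{eq:IP_CPR}, namely $|xG_i \cap xG_j|\cdot|S_i\cap S_j| \leq |G_{i,j}|$, we conclude $|G_i \cap G_j| \leq |G_{i,j}|$, whence $G_i \cap G_j = G_{i,j}$. Applying \cref{prop:IP} then finishes the proof of the intersection property for $G$.

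For the final ``in particular'' clause, I would simply note that $S_i \cap S_j$ is a subgroup of both $S_i$ and $S_j$, so $|S_i \cap S_j|$ divides $\gcd(|S_i|,|S_j|) = \gcd(|\stab_{G_i}(x)|,|\stab_{G_j}(x)|)$; therefore \cref{eq:IP_gcd} implies \cref{eq:IP_CPR} for the same vertex $x$, and the previous paragraph applies verbatim. The main obstacle, and the only place requiring care, is the finiteness issue: if $G_{i,j}$ (equivalently $G_i\cap G_j$) is infinite the ``$\leq$'' between cardinalities is vacuous, so one should either restrict attention to the setting at hand where all the groups $G_{i,j}$ occurring are finite (spherical residues), or phrase the whole inequality chain in terms of indices $[G_i : G_i\cap G_j]$ and the transitivity constants so that it remains meaningful; I would adopt the former, as it is exactly the situation in which the lemma is used in \cref{sec:constructions}.
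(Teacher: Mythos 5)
Your argument is correct and is essentially the paper's own proof: both reduce via \cref{prop:IP} to showing $G_i \cap G_j = G_{i,j}$, and both obtain $|G_i\cap G_j| = |x(G_i\cap G_j)|\cdot|\stab_{G_i}(x)\cap\stab_{G_j}(x)| \leq |xG_i\cap xG_j|\cdot|\stab_{G_i}(x)\cap\stab_{G_j}(x)| \leq |G_{i,j}|$ from the Orbit--Stabiliser Theorem and the containment of the orbit of $x$ under $G_i\cap G_j$ in $xG_i\cap xG_j$. Your justification of the ``in particular'' clause (the order of $\stab_{G_i}(x)\cap\stab_{G_j}(x)$ divides the gcd by Lagrange) and your remark on finiteness are both sound and merely make explicit what the paper leaves implicit.
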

\begin{proof}
    Observe that for every vertex $x$ of $\cX$ and every $i,j \in I$ the following computations follow directly from the Orbit-Stabiliser Theorem:
    \[\begin{aligned} 
    | G_i \cap G_j| 
    &= |x (G_i \cap G_j) | \cdot | \stab_{(G_i \cap G_j)}(x)|  \\
    &=|x (G_i \cap G_j) | \cdot | \stab_{G_i}(x) \cap  \stab_{G_j}(x)| \\
    & \leq |x G_i \cap xG_j | | \stab_{G_i}(x) \cap  \stab_{G_j}(x)|.
    \end{aligned}\]
    If \cref{eq:IP_CPR} holds for some vertex $x$ then 
    \[|G_{i}\cap G_{j}| \leq |G_{ij}|.\] 
    Since $G_{i,j} \subset G_{i} \cap G_{j}$, then $G_{i,j} = G_{i} \cap G_{j}$. The intersection property follows from \cref{prop:IP}.
\end{proof}

\begin{lemma}\label{lem:FT_CPR}
    Let $\cX$ be a proper $n$-edge-coloured graph with colour-set $I = \left\{ 0, \dots, n-1 \right\} $. 
    For $i \in I $, let $\rho_{i}$ denote the permutation of the vertices of $\cX$ induced by the edges of colour $i$. 
    Denote by $G$ the group $\left\langle \rho_{i} : i \in I  \right\rangle $ and for $i,j \in I $ let $G_{i}$ and $G_{i,j}$ denote the subgroups $\left\langle \rho_{k}: k \in I \sm \left\{ i \right\}  \right\rangle $  and $\left\langle \rho_{k}: k \in I \sm \left\{ i,j \right\}  \right\rangle $, respectively. 
    Assume that for every $i \in I$ the group $G_{i}$ is flag-transitive. 
    If for some vertex $x$ of $\cX$ the inequality 
    \begin{equation}\label{eq:FT_CPR}
    \left|\left(  x G_i \cap x G_jG_k \right) \right|  \left|\stab_{G_i}(x)\right| \leq |(G_i \cap G_j)(G_i \cap G_k)|
    \end{equation}
    holds, then $G$ is flag transitive on  the flags of type $\left\{ i,j,k \right\} $ of $\Gamma(G;(G_i)_{i\in I})$.
\end{lemma}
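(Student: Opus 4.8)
The plan is to deduce transitivity on the flags of type $\{i,j,k\}$ from a single subgroup identity, and to establish that identity by an orbit-counting argument in the spirit of the proof of \cref{lem:IP_CPR}. First I would recall that in the incidence system $\Gamma := \Gamma(G;(G_\ell)_{\ell\in I})$ produced by the Tits construction, a flag of type $\{i,j,k\}$ is nothing but a triple of cosets $G_ia,\,G_jb,\,G_kc$ with pairwise nonempty intersections, and that $\{G_i,G_j,G_k\}$ is one such flag. Applying \cref{lem:FT_Group} with $Q=G_i$, $H=G_j$, $K=G_k$, the identity
\[(G_i\cap G_j)(G_i\cap G_k)\;=\;G_i\cap (G_jG_k)\]
is equivalent to the assertion that any three cosets $G_ia$, $G_jb$, $G_kc$ with pairwise nonempty intersections have a common element $w$. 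Granting that, one gets $G_ia=G_iw$, $G_jb=G_jw$, $G_kc=G_kw$, so the flag $\{G_ia,G_jb,G_kc\}$ is the image of $\{G_i,G_j,G_k\}$ under $w$; hence every flag of type $\{i,j,k\}$ lies in the $G$-orbit of $\{G_i,G_j,G_k\}$, and $G$ is transitive on the flags of that type.

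Thus the task reduces to proving the displayed identity. The inclusion $(G_i\cap G_j)(G_i\cap G_k)\subseteq G_i\cap G_jG_k$ is free, since both factors lie in the subgroup $G_i$ and their product obviously lies in $G_jG_k$; so it is enough to show $|G_i\cap G_jG_k|\leq|(G_i\cap G_j)(G_i\cap G_k)|$. Here I would invoke \cref{eq:FT_CPR}: fixing the vertex $x$ for which it holds, consider the orbit map $\varphi\colon G_i\to xG_i$, $g\mapsto xg$, under the action of $G$ on the vertices of $\cX$. All fibres of $\varphi$ have size $|\stab_{G_i}(x)|$, and $\varphi$ carries $G_i\cap G_jG_k$ into $xG_i\cap xG_jG_k$; intersecting $G_i\cap G_jG_k$ with each fibre and summing cardinalities yields
\[|G_i\cap G_jG_k|\;\leq\;|xG_i\cap xG_jG_k|\cdot|\stab_{G_i}(x)|\;\leq\;|(G_i\cap G_j)(G_i\cap G_k)|,\]
the last inequality being exactly \cref{eq:FT_CPR}. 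Combined with the trivial inclusion, the two sets coincide, and the first paragraph finishes the argument.

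The one genuinely delicate point is the middle estimate. Unlike in \cref{lem:IP_CPR}, where the relevant object $G_i\cap G_j$ is a subgroup and the Orbit--Stabiliser Theorem applies on the nose, here $G_jG_k$ need not be a subgroup, so $G_i\cap G_jG_k$ need not be a union of right cosets of $\stab_{G_i}(x)$; I cannot apply Orbit--Stabiliser to it directly and instead must settle for the one-sided bound that $G_i\cap G_jG_k$ meets each fibre of $\varphi$ in at most $|\stab_{G_i}(x)|$ points. Everything else is bookkeeping. (I note in passing that the standing hypothesis that each $G_\ell$ is flag-transitive is not actually used for this particular conclusion; it is the companion hypothesis of \cref{thm:FT_rank3}, which is the statement that \cref{lem:FT_CPR} is designed to feed into.)
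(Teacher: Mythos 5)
Your proposal is correct and follows essentially the same route as the paper: reduce via \cref{lem:FT_Group} to the identity $G_i\cap G_jG_k=(G_i\cap G_j)(G_i\cap G_k)$, then bound $|G_i\cap G_jG_k|$ by fibering the orbit map over $xG_i\cap xG_jG_k$ and applying \cref{eq:FT_CPR}. Your side remarks (that the one-sided fibre bound is all that is available since $G_jG_k$ need not be a subgroup, and that the flag-transitivity hypothesis on the $G_\ell$ is not used for this particular conclusion) are both accurate.
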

\begin{proof}
    According to \cref{lem:FT_Group}, it is sufficient to show that for every $ \{i,j,k\} \subseteq I$
    \[ G_i \cap G_jG_k = (G_i \cap G_j)(G_i \cap G_k). \]
    One inclusion is obvious,  to proof the other we only need to show that
    \[ |G_i \cap G_jG_k| \leq |(G_i \cap G_j)(G_i \cap G_k)|. \]
    
    Take any vertex $x$ of $\cX$ and consider the set $x(G_i \cap G_jG_k) = \{x \alpha : \alpha \in G_i \cap G_jG_k\}$. We show that
    \[ |G_i \cap G_jG_k| \leq |x(G_i \cap G_jG_k)| | \stab_{G_i}(x)|.\]

    To see this, just observe that 	
    \[ G_i \cap G_jG_k = \bigcup_{y \in x(G_i \cap G_jG_k)} \{\alpha \in G_i \cap G_jG_k : x \alpha = y\},\]
    but for a given $y \in x(G_i \cap G_jG_k)$ 
    \[ \{\alpha \in G_i \cap G_jG_k : x \alpha = y\} \subseteq \{\alpha \in G_i : x \alpha = y\} \]
    and the set $\{\alpha \in G_i : x \alpha = y\}$ has size $|\stab_{G_i}(x)|$.

    Finally, note that $x\left( G_{i} \cap G_{j}G_{k} \right) \subset \left( x G_{i} \cap x (G_{j}G_{k} \right)$.
    Hence, if the condition in \cref{eq:FT_CPR} holds for some vertex $x$, then 
    \[ |G_i \cap G_jG_k| \leq |\left(xG_i \cap x(G_jG_k)\right)| | \stab_{G_i}(x)| \leq |(G_i \cap G_j)(G_i \cap G_k)|. \]
\end{proof}

Observe that when applying \cref{lem:FT_Group} to the subgroups $G_{i}$, $G_{j}$, and $G_{k}$ the role of $i$, $j$ and $k$ is symmetric. 
However, for \cref{lem:FT_CPR} the index $i$ plays a different role than that of $j$ and $k$, meaning that the condition in \cref{eq:FT_CPR} might be true for a given order of $\left\{ i,j,k \right\} $ but not for every one of them.
For this reason when using the \cref{lem:FT_CPR} such ordered triples will be denoted by $(i,j,k)$. 
 \section{Constructions}\label{sec:constructions}

In this section for each non-linear Coxeter diagram $D$ of hyperbolic type and each positive integer $t$ we give an explicit construction of a proper edge-coloured graph $\cX^{t}_{D}$.
The permutation group induced by each of these graphs is shown to be the type-preserving automorphism group of a regular hypertope $\cH_{D}^{t}$ whose Coxeter diagram is $D$ for all but finitely many integers $t$. 
As a consequence we build an infinite family of proper regular hypertopes for each hyperbolic type of rank $4$ and $5$.	

For each type $D$ in ranks $4$ and $5$ we start with a graph $X_{D}=(V_{D},E_{D})$ where the vertex-set is $V_{D}=\left\{ 1, \dots, d \right\} $. 
We take $t$ copies of the graph $X_{D}$ to construct the graph $\cX^{t}_{D}$ whose vertex-set is $V_{D}^{t} = V_{D} \times \bZ_{t}$ and such that for every $\ell \in \bZ_{t}$, the subgraph induced by the vertices $\left\{ (v, \ell): v \in V_{D} \right\} $ is isomorphic to $X_{D}$. 
We denote these subgraphs by $X^{\ell}_{D}$.
In each case we specify how to add edges connecting (some) vertices in $X_{D}^{\ell}$ with some vertices in $X_{D}^{\ell+1}$.
The graph $\cX_{D}^{t}$ can be also construced as derived graphs of voltage assigments on $\bZ_{t}$ (see  \cite{Gross_1974_VoltageGraphs,MalnicNedelaSkoviera_2000_LiftingGraphAutomorphisms}).

We show that for most values of $t$ (usually for $t \geq 1$ or $t \geq 2$),  the permutation group $G$ induced by the graph $\cX^{t}_{D}$ satisfies the intersection property and that it is flag-transitive. 
This implies that there exists a regular hypertope $\cH_{D}^{t}$ whose type-preserving automorphism group is $G$.
As a consequence of these constructions, we find an infinite family of proper regular hypertopes for each hyperbolic type. 

In \cref{sec:3334} the constructions and the proofs are explained in great detail. We will omit some of those details in subsequent sections where the techniques are similar and can easily be adapted to each particular type.

\subsection{Type \texorpdfstring{$( 3,3,3,4) $}{(3,3,3,4)}.} \label{sec:3334}

\begin{figure}
\def\svgwidth{.9\textwidth}
\begingroup \makeatletter \providecommand\color[2][]{\errmessage{(Inkscape) Color is used for the text in Inkscape, but the package 'color.sty' is not loaded}\renewcommand\color[2][]{}}\providecommand\transparent[1]{\errmessage{(Inkscape) Transparency is used (non-zero) for the text in Inkscape, but the package 'transparent.sty' is not loaded}\renewcommand\transparent[1]{}}\providecommand\rotatebox[2]{#2}\newcommand*\fsize{\dimexpr\f@size pt\relax}\newcommand*\lineheight[1]{\fontsize{\fsize}{#1\fsize}\selectfont}\ifx\svgwidth\undefined \setlength{\unitlength}{1224.01330566bp}\ifx\svgscale\undefined \relax \else \setlength{\unitlength}{\unitlength * \real{\svgscale}}\fi \else \setlength{\unitlength}{\svgwidth}\fi \global\let\svgwidth\undefined \global\let\svgscale\undefined \makeatother \begin{picture}(1,0.29526306)\lineheight{1}\setlength\tabcolsep{0pt}\put(0,0){\includegraphics[width=\unitlength,page=1]{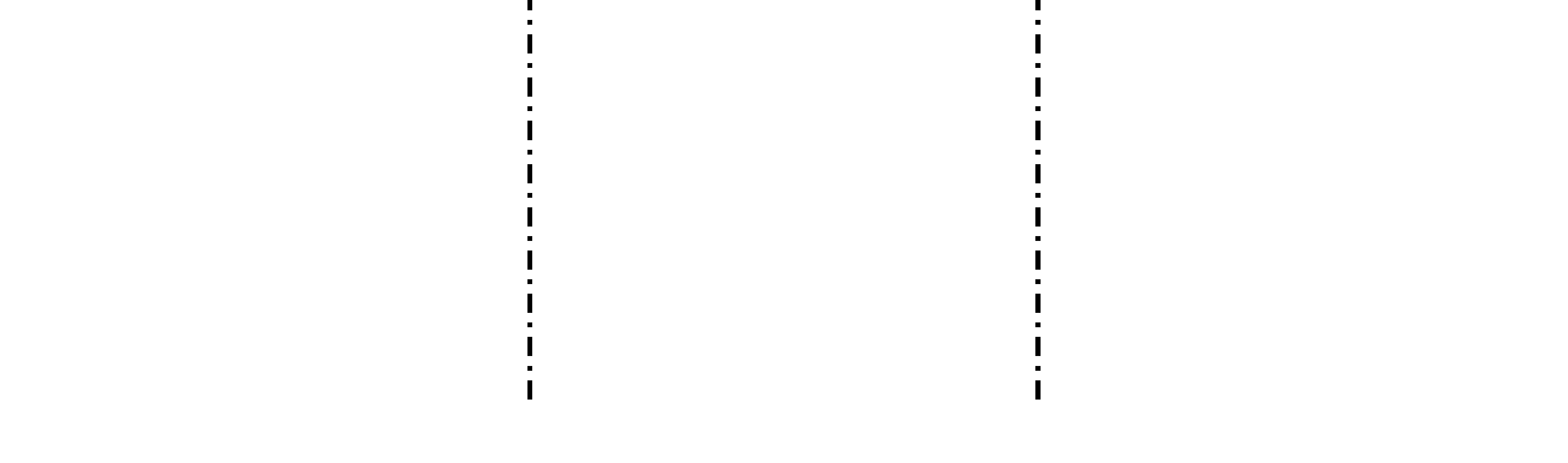}}\put(0.49999999,0.00578033){\color[rgb]{0,0,0}\makebox(0,0)[t]{\lineheight{1.25}\smash{\begin{tabular}[t]{c}$X_{(3,3,3,4)}^{\ell}$\end{tabular}}}}\put(0.82422062,0.00578033){\color[rgb]{0,0,0}\makebox(0,0)[t]{\lineheight{1.25}\smash{\begin{tabular}[t]{c}$X_{(3,3,3,4)}^{\ell+1}$\end{tabular}}}}\put(0.17577936,0.00578033){\color[rgb]{0,0,0}\makebox(0,0)[t]{\lineheight{1.25}\smash{\begin{tabular}[t]{c}$X_{(3,3,3,4)}^{\ell-1}$\end{tabular}}}}\put(0,0){\includegraphics[width=\unitlength,page=2]{3334.pdf}}\end{picture}\endgroup  \caption{The graph $\cX_{(3,3,3,4)}^{t}$.}
\label{fig:3334}
\end{figure}	

Following the notation introduced above, for the diagram
\begin{equation} \label{eq:cox3334}
\begin{tikzcd}[column sep=small]
\overset{\rho_{0}} {\textcolor{Red}{\bullet}} \arrow[d, dash] \arrow[r, dash, "4"]
& \overset{\rho_{3}}{\textcolor{Orange}{\bullet}} \arrow[d, dash] \\
\underset{\rho_{1}}{\textcolor{Green}\bullet} \arrow[r, dash,]
& \underset{\rho_{2}}{\textcolor{Blue}{\bullet}}
\end{tikzcd},
\end{equation}
we start with the graph $X_{(3,3,3,4)}$ on twenty vertices labelled with the numbers $ 1, \dots, 20 $ and solid edges, isomorphic to the graph $X^{\ell}_{(3,3,3,4)}$ between two vertical lines in \cref{fig:3334}.
Taking $t$ copies of the graph in a cyclical arrangement and connecting the vertices of consecutive copies $X^{\ell}_{(3,3,3,4)}$ and $X^{\ell+1}_{(3,3,3,4)}$, with edges indicated by the dotted edges in \cref{fig:3334}, we obtain the proper $4$-edge-coloured graph $\cX_{(3,3,3,4)}^{t}$ on $20t$ vertices. 
The four generators of the group $G$ induced by the graph are given by the permutations

\begin{gather*}
\begin{multlined}
    \rho_{0} = 
    \prod_{\ell \in \bZ_{t}} 
    \Big(  
        \big( ( 5, \ell) (6 ,\ell)  \big) \cdot
        \big( ( 7, \ell) (9 ,\ell)  \big) \cdot
        \big( ( 8, \ell) (10 ,\ell)  \big) \cdot 
        \big( ( 11, \ell) (13 ,\ell)  \big) \cdot \\ \cdot
        \big( ( 12, \ell) (14 ,\ell)  \big) \cdot
        \big( ( 15, \ell) (16 ,\ell)  \big) \cdot
        \big( ( 19, \ell) (2 ,\ell+1)  \big)
    \Big), 
\end{multlined}\\
\begin{multlined}
    \rho_{1} = 
    \prod_{\ell \in \bZ_{t}}
    \Big(  
        \big( (2 , \ell) (6 ,\ell)  \big) \cdot
        \big( (3 , \ell) (8 ,\ell)  \big) \cdot
        \big( (4 , \ell) (7 ,\ell)  \big) \cdot 
        \big( (13 , \ell) (18 ,\ell)  \big) \cdot \\ \cdot
        \big( (14 , \ell) (17 ,\ell)  \big) \cdot
        \big( (15 , \ell) (19 ,\ell)  \big)\cdot
        \big( (16 , \ell) (5 ,\ell+1)  \big)
    \Big),
\end{multlined}\\
\begin{multlined}
    \rho_{2} = 
    \prod_{\ell \in \bZ_{t}} 
    \Big(  
        \big( (1 , \ell) (3 ,\ell)  \big)\cdot
        \big( (5 , \ell) (7 ,\ell)  \big)\cdot
        \big( (6 , \ell) (9 ,\ell)  \big)\cdot 
        \big( (12 , \ell) (15 ,\ell)  \big)\cdot \\ \cdot
        \big( (14 , \ell) (16 ,\ell)  \big)\cdot
        \big( (18 , \ell) (20 ,\ell)  \big)\cdot
        \big( (17 , \ell) (4 ,\ell+1)  \big) 
    \Big),
\end{multlined}\\
\begin{multlined}
    \rho_{3} = 
    \prod_{\ell \in \bZ_{t}} 
    \Big(  
        \big( (3 , \ell) (4 ,\ell)  \big) \cdot
        \big( (7 , \ell) (8 ,\ell)  \big) \cdot
        \big( (9 , \ell) (11 ,\ell)  \big)\cdot 
        \big( (10 , \ell) (12 ,\ell)  \big) \cdot \\ \cdot
        \big( (13 , \ell) (14 ,\ell)  \big) \cdot
        \big( (17 , \ell) (18 ,\ell)  \big)\cdot
        \big( (20 , \ell) (1 ,\ell+1)  \big) 
    \Big).
\end{multlined}
\end{gather*}

To see that the permutation group $G$ described above satisfies the relations implicit in the Coxeter diagram in \cref{eq:cox3334} we need to verify that each $\left\{ i,j \right\} $-component in the graph $\cX_{(3,3,3,4)}$ is of the correct size. 
For example, every $\left\{ 0,3 \right\}$-component is either an alternating cycle with $8$ vertices, a path of colour $0$ with two vertices, or a path of colour $3$ with two vertices, which implies that the order of $\rho_{0}\rho_{3}$ is $\lcm(4,2,2) = 4$ (see \cref{lem:polygonalAction}).

Note that the orbit of $(7,0)$ under the element $\chi_{1}=\rho_{0} \rho_{3} \rho_{2}$ has $6$ elements, namely $(7,0),(11,0),(16,0),(12,0),(13,0)$ and $(6,0)$.
It follows that the period of $\chi_{1}$ is at least $6$ and therefore the group $G_{1}=\left\langle \rho_{0}, \rho_{3}, \rho_{2} \right\rangle $ is isomorphic to the Coxeter group $[4,3]$ of order $48$.
This implies that the residue of type $1$ is isomorphic to a cube and not a hemi-cube.
Similarly, the orbit of $(7,0)$ under $\chi_{2}=\rho_{3}\rho_{0}\rho_{1}$ is also of length $6$, which implies that the group $G_{2} = \left\langle \rho_{3}, \rho_{0}, \rho_{1} \right\rangle $ is also isomorphic to $[4,3]$. 
Finally, observe that the groups $G_{0} = \left\langle \rho_{1}, \rho_{2}, \rho_{3} \right\rangle $ and $G_{3} = \left\langle \rho_{0}, \rho_{1}, \rho_{2} \right\rangle$ are both isomorphic to the Coxeter group $[3,3]$.
In particular, the subgroups $G_{i}$ satisfy the intersection property for every $i \in \left\{ 0,1,2,3 \right\} $.

We use \cref{lem:IP_CPR} to prove the intersection property for the group $G$. 
In order to do so, for every pair $\left\{ i,j \right\} \subset \left\{ 0,1,2,3 \right\}  $ we need to find a vertex $x_{i,j}$ of the graph $\cX_{(3,3,3,4)}$ that satisfies \cref{eq:IP_CPR} or rather, the slightly stronger condition in \cref{eq:IP_gcd}, that is
\[
|x_{i,j} G_i \cap x_{i,j} G_j | \cdot \gcd\left(  	|\stab_{G_i}(x_{i,j})|, |\stab_{G_j}(x_{i,j})| \right) \leq |G_{i,j}|.
\]

We list those vertices in \cref{tab:IP_3334}, where $o_{i,j}$, $s_{i}$ and $s_{j}$ denote $|x_{ij}G_{i} \cap x_{ij}G_{j}|$, $|\stab_{G_{i}}\left( x_{i,j} \right)|$ and $|\stab_{G_{j}}\left( x_{i,j} \right)|$, respectively. 
Observe that those numbers can be easily computed from the graph.
Verification of the inequality above follows directly from the diagram in \cref{eq:cox3334} and the values in \cref{tab:IP_3334}.

\begin{table}
\begin{tabularx}{\textwidth}{| \cc{.3} | \cc{.3} | \cc{.7} | \cc{2.8} | \cc{2.8} | \cc{.5} | \cc{.3} | \cc{.3} | }
    \hline
    $ i$ & $ j$ & $ x_{i,j}$ & $x_{i,j} G_{i}$ & $ x_{i,j} G_{j}$ & $o_{i,j}$ & $ s_{i}$ & $s_{j} $ \\ \hline
    $0$ & $1$ & $(2,0)$ 
    & $
    \begin{multlined}	
        \left\{ (9, 0), (2, 0), \right. \\\left.(11, 0), (6, 0) \right\}  
    \end{multlined}$
    & $\left\{ (2, 0), (19, -1) \right\} $ 
    & $1$ & $6$ & $24$  \\ \hline
$0$ & $2$ & $(1,0)$ 
    &$	\begin{aligned}
    \{&(1, 0), (3, 0), (4, 0), \\ &(5, 0), (7, 0), (8, 0), \\&(13, -1), (14, -1), \\   &(16, -1), (17, -1), \\&(18, -1), (20, -1) \}
    \end{aligned}$
    & $\left\{ (20, -1), (1, 0) \right\} $ 
    & $2$ & $2$ & $24$   \\ \hline
    $0$ & $3$ & $(1,0)$ 
    & $	\begin{aligned}
    \{&(1, 0), (3, 0), (4, 0), \\ &(5, 0), (7, 0), (8, 0), \\&(13, -1), (14, -1), \\   &(16, -1), (17, -1), \\&(18, -1), (20, -1) \}
    \end{aligned}$
    & $\begin{multlined}
    \left\{  (8, 0), (1, 0), \right. \\ \left. (10, 0), (3, 0)\right\} 
        \end{multlined}$
    & $3$ & $2$ & $6$  \\ \hline
    $1$ & $2$ & $(3,0)$ 
    & $\begin{aligned}
    \{ &(1, 0), (3, 0), \\&(4, 0), (17, -1), \\&(18, -1), (20, -1)  \} 
    \end{aligned}$ 
    & $\begin{aligned}
    \{ &(3, 0), (4, 0), (7, 0), \\&(8, 0), (9, 0), (10, 0), \\&(11, 0), (12, 0), (13, 0), \\&(14, 0), (17, 0), (18, 0) \} 
    \end{aligned}$ 
    & $2$ & $8$ & $4$   \\ \hline
    $1$ & $3$ & $(1,0)$ 
    & $ \begin{aligned}
    \{ &(1, 0), (3, 0), \\ &(4, 0), (17, -1), \\ &(18, -1), (20, -1) \} 
    \end{aligned}$
    & $ \begin{multlined}
    \{ (8, 0), (1, 0), \\ (10, 0), (3, 0) \} 
    \end{multlined}$ 
    & $2$ & $8$ & $6$  \\ \hline
    $2$ & $3$ & $(1,0)$ 
    & $ \begin{aligned}
    \{ &(20, -1), (1, 0) \} 
    \end{aligned}$
    & $ \begin{multlined}
    \{ (8, 0), (1, 0), \\ (10, 0), (3, 0) \} 
    \end{multlined}$ 
    & $1$ & $24$ & $6$  \\ \hline

\end{tabularx}
\caption{Intersection property for $\cX_{(3,3,3,4)}^{t}$}
\label{tab:IP_3334}
\end{table}

To show that the group $G$ is flag-transitive we use \cref{lem:FT_CPR}. 
The approach is very similar to that used for the intersection property. 
For every set $\left\{ i,j,k \right\} \subset \left\{ 0,1,2,3 \right\}  $ we need to find a vertex $x$ that satisfies the condition in \cref{eq:FT_CPR}. 
We list those vertices in \cref{tab:FT_3334} where $o_{i,j,k}$ denotes $|xG_{i} \cap xG_{j}G_{k}|$ and $s_{i}$ denotes $|\stab_{G_{i}}(x)|$. 
Observe that since $G$ satisfies the intersection property 
\[\left| \left( G_{i}\cap G_{j} \right)\left( G_{i} \cap G_{k} \right) \right|
=|G_{i,j} G_{i,k}| 
= \frac{|G_{i,j}||G_{i,k}|}{\left| \left( G_{i,j} \cap G_{i,k} \right) \right| } = 2p_{l,k}p_{l,j}
\]
where for $l \in \left\{ 0,1,2,3 \right\}\sm\left\{ i,j,k \right\}  $, $p_{l,k}$ and $p_{l,j}$ denote the periods of $\rho_{l} \rho_{k}$ and $\rho_{l} \rho_{j}$, respectively.

\begin{table}
\begin{tabularx}{\textwidth}
{| \cc{.3} | \cc{.3} |  \cc{.3} | \cc{.7} | \cc{2.8} | \cc{2.8} | \cc{.5} | \cc{.3} | }
    \hline
    $i$ & $j$ & $k$ & $x$& $xG_{i}$ & $xG_{j}G_{k}$ & $o_{i,j,k}$ & $s_{i}$ \\ \hline
    $0$ & $1$ & $2$ & $(2,0)$  
    & $ 
\begin{multlined}
    \{ (9, 0), (2, 0), \\ (11, 0), (6, 0) \} 
    \end{multlined}
$
    & $ 
    \begin{aligned}
\{ &(2, 0), (5, 0), \\ &(6, 0), (15, -1), \\ &(16, -1), (19, -1) \} 
\end{aligned}
    $
    & $2$ & $6$ \\ \hline
    $0$ & $1$ & $3$ & $(2,0)$  
    & $ 
\begin{multlined}
    \{ (9, 0), (2, 0), \\ (11, 0), (6, 0) \} 
    \end{multlined}
$
    & $ 
    \begin{aligned}
\{ &(2, 0), (4, 0), (5, 0), \\ &(6, 0), (7, 0), (12, -1), \\ &(9, 0), (14, -1), \\& (15, -1),  (16, -1), \\& (17, -1), (19, -1) \} 
\end{aligned}
    $
    & $3$ & $6$ \\ \hline
    $0$ & $2$ & $3$ & $(1,0)$  
    & $ 
    \begin{aligned}
\{ &(1, 0), (3, 0), (4, 0), \\ &(5, 0), (7, 0), (8, 0), \\&(13, -1), (14, -1), \\&(16, -1), (17, -1),\\& (18, -1), (20, -1) \} 
\end{aligned}
    $
    & $ 
    \begin{aligned}
\{&(1, 0), (3, 0), (11, -1), \\&(8, 0), (13, -1), (10, 0), \\&(18, -1), (20, -1)  \} 
\end{aligned}
    $
    & $6$ & $2$ \\ \hline
    $1$ & $2$ & $3$ & $(5,0)$  
    & $ 
    \begin{aligned}
\{  &(5, 0), (6, 0), (7, 0),\\ &(8, 0), (9, 0), (10, 0), \\&(11, 0), (12, 0), (13, 0),\\ &(14, 0), (15, 0), (16, 0)\} 
\end{aligned}
    $
    & $ 
    \begin{aligned}
\{ &(2, 0), (4, 0), (5, 0), \\&(6, 0), (7, 0), (12, -1), \\&(9, 0), (14, -1), \\&(15, -1), (16, -1), \\&(17, -1), (19, -1) \} 
\end{aligned}
    $
    & $4$ & $4$ \\\hline
\end{tabularx}	\caption{Flag transitivity for $\cX_{(3,3,3,4)}^{t}$}
\label{tab:FT_3334}
\end{table}
    
In both arguments above, the one used to prove the intersection property and the one for flag-transitivity, we are strongly using the fact that $t \geq 2$.
Our construction is well-defined for $t=1$ and the intersection property and flag transitivity can be easily checked using, for example, SageMath	 \cite{Developers_2020_SagemathSageMathematics}.

\subsection{Type \texorpdfstring{$( 3,3,3,5) $}{(3,3,3,5)}.} \label{sec:3335}

\begin{figure}
\def\svgwidth{.9\textwidth}
\begingroup \makeatletter \providecommand\color[2][]{\errmessage{(Inkscape) Color is used for the text in Inkscape, but the package 'color.sty' is not loaded}\renewcommand\color[2][]{}}\providecommand\transparent[1]{\errmessage{(Inkscape) Transparency is used (non-zero) for the text in Inkscape, but the package 'transparent.sty' is not loaded}\renewcommand\transparent[1]{}}\providecommand\rotatebox[2]{#2}\newcommand*\fsize{\dimexpr\f@size pt\relax}\newcommand*\lineheight[1]{\fontsize{\fsize}{#1\fsize}\selectfont}\ifx\svgwidth\undefined \setlength{\unitlength}{1266.53308105bp}\ifx\svgscale\undefined \relax \else \setlength{\unitlength}{\unitlength * \real{\svgscale}}\fi \else \setlength{\unitlength}{\svgwidth}\fi \global\let\svgwidth\undefined \global\let\svgscale\undefined \makeatother \begin{picture}(1,0.25177884)\lineheight{1}\setlength\tabcolsep{0pt}\put(0,0){\includegraphics[width=\unitlength,page=1]{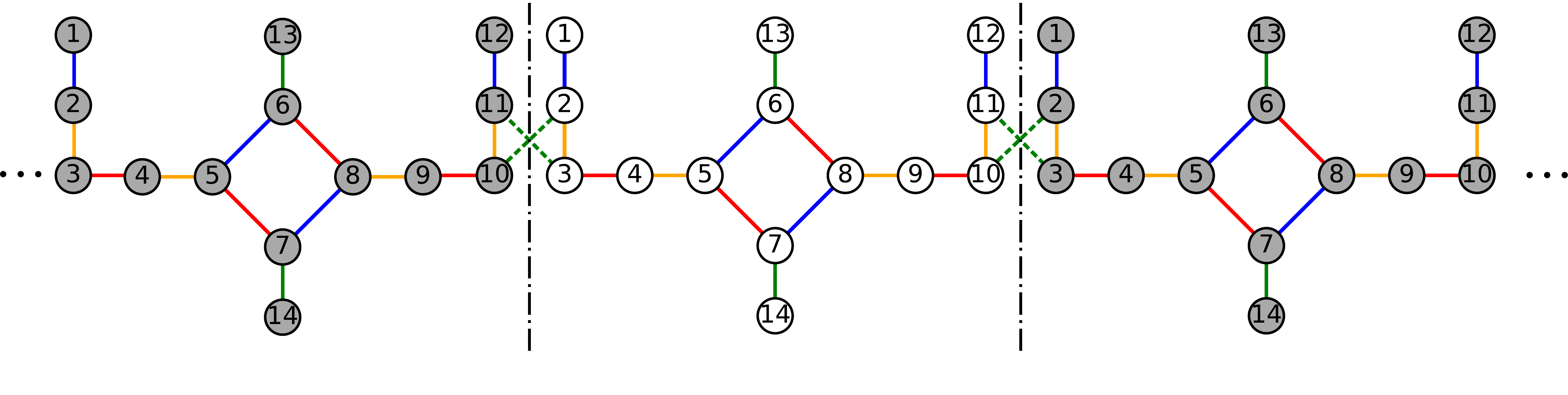}}\put(0.49440467,0.00558627){\color[rgb]{0,0,0}\makebox(0,0)[t]{\lineheight{1.25}\smash{\begin{tabular}[t]{c}$X_{(3,3,3,5)}^{\ell}$\end{tabular}}}}\put(0.1810687,0.00558627){\color[rgb]{0,0,0}\makebox(0,0)[t]{\lineheight{1.25}\smash{\begin{tabular}[t]{c}$X_{(3,3,3,5)}^{\ell-1}$\end{tabular}}}}\put(0.80774067,0.00558627){\color[rgb]{0,0,0}\makebox(0,0)[t]{\lineheight{1.25}\smash{\begin{tabular}[t]{c}$X_{(3,3,3,5)}^{\ell+1}$\end{tabular}}}}\end{picture}\endgroup  \caption{The graph $\cX_{(3,3,3,5)}^{t}$.}
\label{fig:3335}
\end{figure}	

The vertex set of the graph $\cX^{t}_{(3,3,3,5)}$ for the diagram in \cref{eq:cox3335} is $\left\{ 1,\dots,14 \right\} \times \bZ_{t} $. 
The edges of the graph $X_{(3,3,3,5)}$ can again be seen as the solid edges in between the vertical lines in \cref{fig:3335}.
The dotted edges connecting the vertices $(10,\ell)$ and $(11,\ell)$ to $(2,\ell+1)$ and $(3,\ell+1)$, respectively, for every $\ell \in \bZ_{t}$ complete the edge-set of $\cX^{t}_{(3,3,3,5)}$.

\begin{equation}\label{eq:cox3335}
\begin{tikzcd}[column sep=small]
\overset{\rho_{0}} {\textcolor{Red}{\bullet}} \arrow[d, dash] \arrow[r, dash, "5"]
& \overset{\rho_{3}}{\textcolor{Orange}{\bullet}} \arrow[d, dash] \\
\underset{\rho_{1}}{\textcolor{Green}\bullet} \arrow[r, dash,]
& \underset{\rho_{2}}{\textcolor{Blue}{\bullet}}
\end{tikzcd}
\end{equation}

As in the previous section, we can use \cref{lem:polygonalAction} to verify that the relations for the permutation group $G$ induced by $\cX^{t}_{(3,3,3,5)}$ are actually those implicit in \cref{eq:cox3335}.
It is easily checked that the order of the Coxeter elements in the groups $G_{1}=\left\langle \rho_{0}, \rho_{3}, \rho_{2} \right\rangle $ and $G_{2}= \left\langle \rho_{3}, \rho_{0}, \rho_{1} \right\rangle $ is $10$. 
It follows that the residues of type $1$ and $2$ are spherical polytopes. The residues of type $0$ and $3$ are obviously spherical.

Analogous to what we did in \cref{sec:3334}, the intersection property for the group $G$ can be proved using \cref{lem:IP_CPR}.
In this case, we can use the vertex $(1,0)$ for every pair $\left\{ i,j \right\} \subset \left\{ 0,1,2,3 \right\} $.
Similarly, we can use \cref{lem:FT_CPR} to prove that the group $G$ is flag-transitive. 
It can be easily verified that \cref{eq:FT_CPR} holds for $x=(13,0)$ when $(i,j,k)$ is $(0,1,2)$ or $(0,1,3)$, and for $x=(1,0)$ when $(i,j,k)$ is $(0,2,3)$ or $(1,2,3)$. 
For both arguments to hold we need that $t\geq 2$. 
Intersection property and flag transitivity for $t=1$ can be easily verified using, for example, SageMath \cite{Developers_2020_SagemathSageMathematics}.

\subsection{Type \texorpdfstring{$( 3,4,3,4) $}{(3,4,3,4)}.} \label{sec:3434}

\begin{figure}
\def\svgwidth{.9\textwidth}
\begingroup \makeatletter \providecommand\color[2][]{\errmessage{(Inkscape) Color is used for the text in Inkscape, but the package 'color.sty' is not loaded}\renewcommand\color[2][]{}}\providecommand\transparent[1]{\errmessage{(Inkscape) Transparency is used (non-zero) for the text in Inkscape, but the package 'transparent.sty' is not loaded}\renewcommand\transparent[1]{}}\providecommand\rotatebox[2]{#2}\newcommand*\fsize{\dimexpr\f@size pt\relax}\newcommand*\lineheight[1]{\fontsize{\fsize}{#1\fsize}\selectfont}\ifx\svgwidth\undefined \setlength{\unitlength}{757.37719727bp}\ifx\svgscale\undefined \relax \else \setlength{\unitlength}{\unitlength * \real{\svgscale}}\fi \else \setlength{\unitlength}{\svgwidth}\fi \global\let\svgwidth\undefined \global\let\svgscale\undefined \makeatother \begin{picture}(1,0.43508292)\lineheight{1}\setlength\tabcolsep{0pt}\put(0,0){\includegraphics[width=\unitlength,page=1]{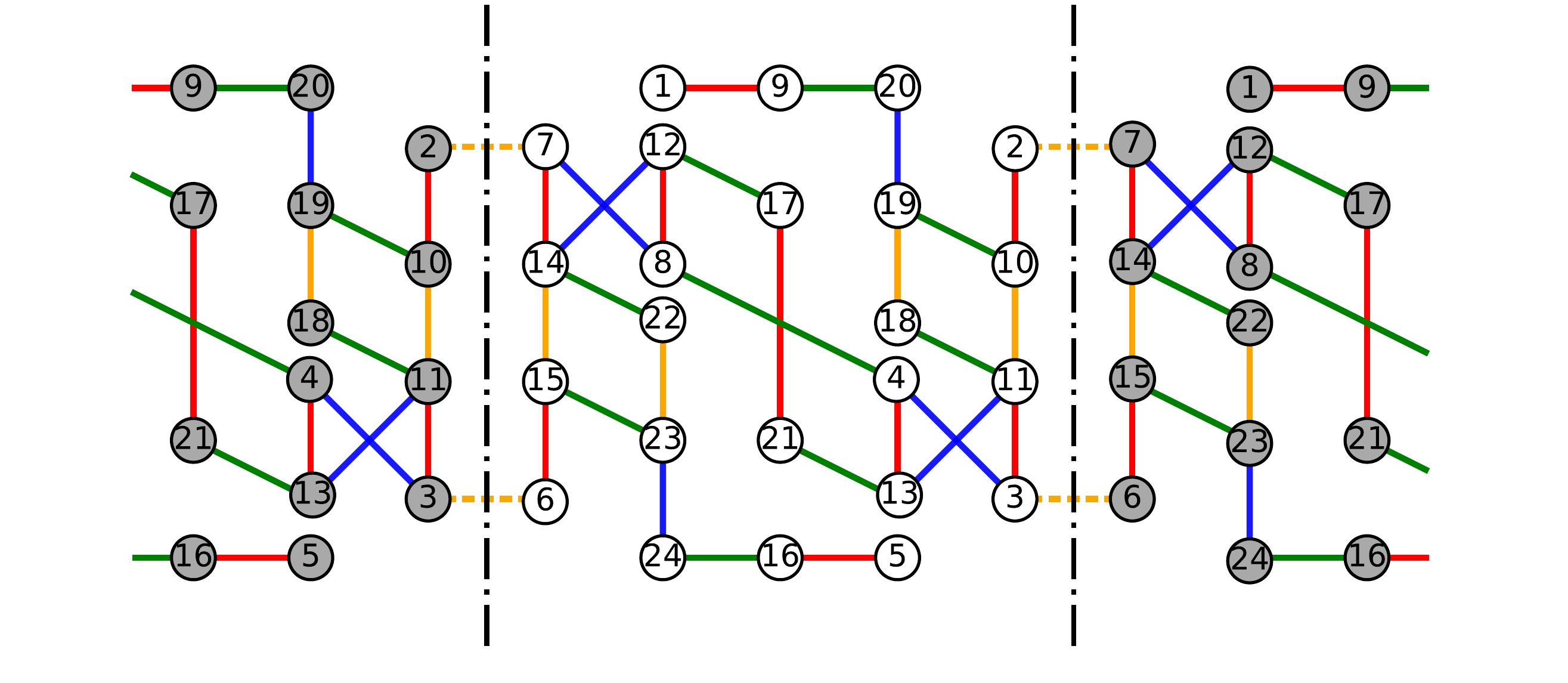}}\put(0.49763799,0.00467086){\color[rgb]{0,0,0}\makebox(0,0)[t]{\lineheight{1.25}\smash{\begin{tabular}[t]{c}$X_{(3,4,3,4)}^{\ell}$\end{tabular}}}}\put(0.12336663,0.00467086){\color[rgb]{0,0,0}\makebox(0,0)[t]{\lineheight{1.25}\smash{\begin{tabular}[t]{c}$X_{(3,4,3,4)}^{\ell-1}$\end{tabular}}}}\put(0.87190936,0.00467086){\color[rgb]{0,0,0}\makebox(0,0)[t]{\lineheight{1.25}\smash{\begin{tabular}[t]{c}$X_{(3,4,3,4)}^{\ell+1}$\end{tabular}}}}\put(0,0){\includegraphics[width=\unitlength,page=2]{3434.pdf}}\end{picture}\endgroup  \caption{The graph $\cX_{(3,4,3,4)}^{t}$.  }
\label{fig:3434}
\end{figure}	

The graph $\cX_{(3,4,3,4)}^{t}$ in \cref{fig:3434} is associated with the Coxeter diagram in \cref{eq:cox3434}. 
\begin{equation}\label{eq:cox3434}
\begin{tikzcd}[column sep=small]
\overset{\rho_{0}} {\textcolor{Red}{\bullet}} \arrow[d, dash] \arrow[r, dash, "4"]
& \overset{\rho_{3}}{\textcolor{Orange}{\bullet}} \arrow[d, dash] \\
\underset{\rho_{1}}{\textcolor{Green}\bullet} \arrow[r, dash, "4"']
& \underset{\rho_{2}}{\textcolor{Blue}{\bullet}}
\end{tikzcd}
\end{equation}

As above, the relations for the permutation group $G$ can be verified using \cref{lem:polygonalAction}.
To see that the maximal residues of the induced hypertope are spherical and not projective, consider the orbit under the action of one of the Coxeter elements on $(1,0)$ for $G_{3}$, on $(11,0)$ for $G_{2}$, on $(14,0)$ for $G_{1}$, and on $(3,0)$ for $G_{0}$.
All those four orbits are of length $6$, which implies that the groups $G_{0}$, $G_{1}$, $G_{2}$ and $G_{3}$ are all isomorphic to the Coxeter group $[4,3]$. 

To prove the intersection property, if $\left\{ i,j \right\} \neq \left\{ 2,3 \right\}  $, we can proceed as before and use \cref{eq:IP_gcd} on the vertices $x_{0,1}=(9,0)$, $x_{0,2}=(9,0)$, $x_{0,3}=(1,0)$, $x_{1,2}=(4,0)$ and $x_{1,3}=(18,0)$ to show that $G_{i} \cap G_{j} = G_{i,j}$.
However, for every vertex $x$ of $\cX_{(3,4,3,4)^{t}}$, 
\[\begin{aligned}
  \gcd\left( \left| \stab_{G_{2}}(x) \right|, \left| \stab_{G_{3}} \right|   \right) &\geq 4 && \text{and} \\ \left| xG_{2} \cap x G_{3} \right| &\geq 2.
\end{aligned}\]
This implies that we cannot use \cref{eq:IP_gcd} for $\{i,j\} = \{2,3\}$. However we still can use \cref{lem:IP_CPR}. In particular we will show that \cref{eq:IP_CPR} holds for $x=(3,0)$.

First, let us show that $\stab_{G_{2}}(x)\cap \stab_{G_{3}}(x) = \left\langle \rho_{1} \right\rangle $. 
One inclusion is obvious, since there is no edge of colour $1$ incident to $(3,0)$ in $\cX_{(3,4,3,4)}$.
Observe that $\stab_{G_{2}}(x)= \left\langle \rho_{1}, \alpha \right\rangle $ where $\alpha=\rho_{0} \rho_{3} \rho_{1} \rho_{0} \rho_{1}\rho_{3} \rho_{0}$. 
To see this, just notice that $\left\langle \rho_{1}, \alpha \right\rangle \leq \stab_{G_{2}}(x)$ and that the orbit of $(11,0)$ under $\left\langle  \rho_{1}, \alpha \right\rangle $ is $\left\{ (11,0), (18,0), (14,1), (21,1)  \right\} $. 
The latter implies that $\left| \left\langle \rho_{1}, \alpha \right\rangle \right| \geq 4 = \left| \stab_{G_{2}}(x) \right| $.
Note that every element in $\stab_{G_{2}}(x)\cap \stab_{G_{3}}(x)$ permutes the elements in the set  $xG_{2} \cap xG_{3} = \left\{ (3,0), (11,0),(18,0) \right\} $ but $(11,0) \alpha = (14,1)$. 
Therefore $\stab_{G_{2}}(x)\cap \stab_{G_{3}}(x)$ is a proper subgroup of $\left\langle \rho_{1}, \alpha \right\rangle $, hence of order at most $2$.

As a consequence of the discussion above, the vertex $x=(3,0)$ satisfies \cref{eq:IP_CPR}, that is, 
\[\left| xG_{2} \cap xG_{3} \right|\cdot \left| \stab_{G_{2}}(x)\cap \stab_{G_{3}}(x) \right| = 3 \cdot 2 = \left| G_{2,3} \right|.\]
By \cref{lem:IP_CPR}, the group $G$ satisfies the intersection property.

Flag transitivity follows in a similar way as in \cref{sec:3334} from \cref{lem:FT_CPR} by considering the vertices $x$ as follows:
\begin{itemize}
    \item $(9,0)$ when  $(i,j,k) $ is $(0,1,2)$ or $(0,1,3)$,
    \item $(1,0)$ for $(i,j,k)=(3,0,2)$, and
    \item $(4,0)$ for $(i,j,k)=(1,2,3)$.
\end{itemize}

For our arguments proving the intersection property and flag transitivity for the permutation group to hold, we are assuming $t\geq 2$. 
The intersection property and flag transitivity can be easily checked for $t=1$ using computational tools.

\subsection{Type \texorpdfstring{$( 3,4,3,5) $}{(3,4,3,5)}.} \label{sec:3435}

\begin{figure}
\def\svgwidth{\textwidth}
\begingroup \makeatletter \providecommand\color[2][]{\errmessage{(Inkscape) Color is used for the text in Inkscape, but the package 'color.sty' is not loaded}\renewcommand\color[2][]{}}\providecommand\transparent[1]{\errmessage{(Inkscape) Transparency is used (non-zero) for the text in Inkscape, but the package 'transparent.sty' is not loaded}\renewcommand\transparent[1]{}}\providecommand\rotatebox[2]{#2}\newcommand*\fsize{\dimexpr\f@size pt\relax}\newcommand*\lineheight[1]{\fontsize{\fsize}{#1\fsize}\selectfont}\ifx\svgwidth\undefined \setlength{\unitlength}{1654.96875bp}\ifx\svgscale\undefined \relax \else \setlength{\unitlength}{\unitlength * \real{\svgscale}}\fi \else \setlength{\unitlength}{\svgwidth}\fi \global\let\svgwidth\undefined \global\let\svgscale\undefined \makeatother \begin{picture}(1,0.55000795)\lineheight{1}\setlength\tabcolsep{0pt}\put(0,0){\includegraphics[width=\unitlength,page=1]{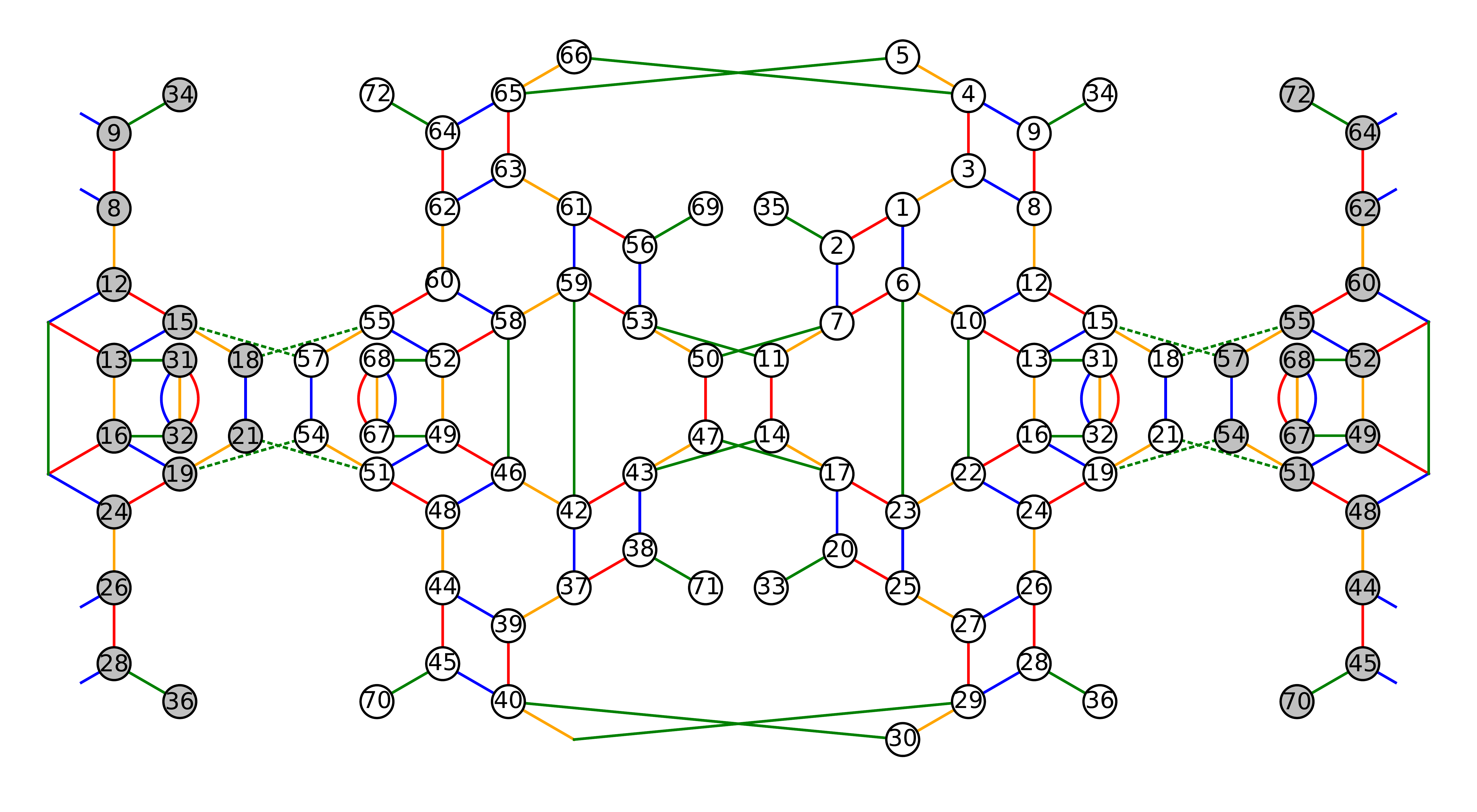}}\put(0.50000002,0.00213758){\color[rgb]{0,0,0}\makebox(0,0)[t]{\lineheight{1.25}\smash{\begin{tabular}[t]{c}$X_{(3,4,3,5)}^{\ell}$\end{tabular}}}}\put(0.07724916,0.00213758){\color[rgb]{0,0,0}\makebox(0,0)[t]{\lineheight{1.25}\smash{\begin{tabular}[t]{c}$X_{(3,4,3,5)}^{\ell-1}$\end{tabular}}}}\put(0.92275087,0.00213758){\color[rgb]{0,0,0}\makebox(0,0)[t]{\lineheight{1.25}\smash{\begin{tabular}[t]{c}$X_{(3,4,3,5)}^{\ell+1}$\end{tabular}}}}\put(0,0){\includegraphics[width=\unitlength,page=2]{3435.pdf}}\end{picture}\endgroup  \caption{The graph $\cX_{(3,4,3,5)}^{t}$.  }
\label{fig:3435}
\end{figure}

In \cref{fig:3435} we introduce the proper $4$-edge-coloured graph  $\cX_{(3,4,3,5)}$ associated with the hypertopes whose Coxeter diagram is the one in \cref{eq:cox3435}.
\begin{equation}\label{eq:cox3435}
\begin{tikzcd}[column sep=small]
    \overset{\rho_{0}} {\textcolor{Red}{\bullet}} \arrow[d, dash] \arrow[r, dash, "5"]
        & \overset{\rho_{3}}{\textcolor{Orange}{\bullet}} \arrow[d, dash] \\
        \underset{\rho_{1}}{\textcolor{Green}\bullet} \arrow[r, dash, "4"']
        & \underset{\rho_{2}}{\textcolor{Blue}{\bullet}}
    \end{tikzcd}
\end{equation}

As before, the relations implicit in the diagram in \cref{eq:cox3435} for the induced permutation group $G$ follow from \cref{lem:polygonalAction}. 

The intersection property for the group $G$ follows from \cref{eq:IP_gcd} with the vertices $x_{i,j}$ as follows:
\begin{itemize}
    \item $(1,0)$ if $\{i,j\}$ is $\{0,2\}$, $\{1,2\}$ or $\{2,3\}$, 
    \item $(2,0)$ if $\{i,j\}$ is $\{0,1\}$ or $\{0,3\}$, and
    \item $(5,0)$ if $\{i,j\} = \{0,2\} $.
\end{itemize}

We can use \cref{eq:FT_CPR} with the vertex $(31,0)$ when $(i,j,k)$ is $(0,1,2)$, $(0,1,3)$ and $(2,1,3)$ to prove that \[G_{i} \cap G_{j}G_{k} = G_{i,j}G_{i,k}. \]
However, if $\{i,j,k\} = \{0,2,3\}$ there is no vertex $x$ of the graph $\cX_{(3,4,3,5)}$ that satisfies \cref{eq:FT_CPR}, but the following argument proves that the subgroups $G_{0}$, $G_{2}$ and $G_{3}$ satisfy \cref{lem:FT_Group}.

Observe that \[G_{i} \cap G_{j}G_{k}  = G_{i} \cap \left( \bigcup_{\alpha \in R} \alpha G_{k} \right) \] where $R$ is a set of coset representatives of $G_{j,k}$ in $G_{j}$. If $G_{j}$ is a Coxeter group then the set $R$ is easy to compute (see \cite[Section 1.10]{Humphreys_1990_ReflectionGroupsCoxeter}).
If $i=2$, $j=3$ and $k=0$ then $G_{j} = \left\langle \rho_{2}, \rho_{1}, \rho_{0} \right\rangle$ is isomorphic to the Coxeter group $[4,3]$.

The subgroup $G_{j,k}$ has index $6$ in $G_{j}$ and the set $R$ of coset representatives can be chosen as $\{\id, \rho_{0}, \rho_{1}\rho_{0}, \rho_{2}\rho_{1}\rho_{0}, \rho_{1}\rho_{2}\rho_{1}\rho_{0}, \rho_{0}\rho_{1}\rho_{2}\rho_{1}\rho_{0}\}$.
Since $G$ satisfies the intersection property, we know that $|G_{2} \cap G_{0}| = |G_{2,0}|=4$.
Now, since $\rho_{0} \in G_{2}$, \[|G_{2} \cap \rho_{0} G_{0}| = |\rho_{0}(G_{2} \cap G_{0})| =|G_{2,0}|=4.\]
Similarly $|G_{2} \cap \rho_{1}\rho_{0} G_{0}| = 4$. 
Since $|G_{2,3}G_{2,0}| = \frac{|G_{2,3}|\cdot|G_{2,0}|}{|(G_{2,3}\cap G_{2,0})|} = \frac{6\cdot 4}{2} = 12$, we need to show that $G_{2} \cap \alpha G_{0} = \emptyset$ for $\alpha \in \{ \rho_{2}\rho_{1}\rho_{0}, \rho_{1}\rho_{2}\rho_{1}\rho_{0}, \rho_{0}\rho_{1}\rho_{2}\rho_{1}\rho_{0}\}$.

Assume that $\alpha = \rho_{2}\rho_{1}\rho_{0}$ and let $x$ be the vertex $(1,0)$ of $\cX_{(3,4,3,5)}$ and let $y = x \alpha = (17,0)$.
Observe that
\[\begin{aligned}
&\begin{multlined}
xG_{2} = \{ (1,0), (2,0),  (3,0),  (4,0), (35, 0), (66, 0), \\(5, 0), (65, 0), (69, 0), (56, 0), (61, 0), (63, 0)\}
\end{multlined} && \text{and} \\ 
&\begin{multlined}
yG_{0} = \{ (33, 0), (38, 0), (71, 0), (43, 0), \\ (14, 0), (47, 0), (17, 0), (20, 0)\}
\end{multlined}
\end{aligned}\]

An element in  $G_{2} \cap \alpha G_{0}$ must map $x$ to a vertex in \[xG_{2} \cap (x\alpha)G_{0} = xG_{2} \cap yG_{0}.\] 
Since $xG_{2} \cap yG_{0} = \emptyset$, we have $G_{2} \cap \rho_{2}\rho_{1}\rho_{0} G_{0} = \emptyset$.
Now, because $\rho_{1}$ and $\rho_{0}\rho_{1}$ are elements in $G_{2}$, it follows that 
\[|G_{2}\cap \rho_{0}\rho_{1}\rho_{2}\rho_{1}\rho_{0}G_{0}|=|G_{2}\cap \rho_{1}\rho_{2}\rho_{1}\rho_{0}G_{0}|=|G_{2}\cap \rho_{2}\rho_{1}\rho_{0}G_{0}|=0.\]
Therefore 
\[G_{2} \cap G_{3}G_{0} = G_{2,3}G_{2,0}. \]

\cref{thm:FT_rank3} implies that the group $G$ is flag-transitive.
Therefore the graph in \cref{fig:3435} is in fact a CPR-graph of a regular hypertope. 
As in previous sections, the arguments presented here hold if $t \geq 2$.
If $t=1$ the induced group also satisfies the intersection property and is flag-transitive.
These conditions can be easily checked with SageMath.

\subsection{Type \texorpdfstring{$( 3,5,3,5) $}{(3,5,3,4)}.} \label{sec:3535}

\begin{figure}
\def\svgwidth{\textwidth}
\begingroup \makeatletter \providecommand\color[2][]{\errmessage{(Inkscape) Color is used for the text in Inkscape, but the package 'color.sty' is not loaded}\renewcommand\color[2][]{}}\providecommand\transparent[1]{\errmessage{(Inkscape) Transparency is used (non-zero) for the text in Inkscape, but the package 'transparent.sty' is not loaded}\renewcommand\transparent[1]{}}\providecommand\rotatebox[2]{#2}\newcommand*\fsize{\dimexpr\f@size pt\relax}\newcommand*\lineheight[1]{\fontsize{\fsize}{#1\fsize}\selectfont}\ifx\svgwidth\undefined \setlength{\unitlength}{902.09814453bp}\ifx\svgscale\undefined \relax \else \setlength{\unitlength}{\unitlength * \real{\svgscale}}\fi \else \setlength{\unitlength}{\svgwidth}\fi \global\let\svgwidth\undefined \global\let\svgscale\undefined \makeatother \begin{picture}(1,0.75806896)\lineheight{1}\setlength\tabcolsep{0pt}\put(0,0){\includegraphics[width=\unitlength,page=1]{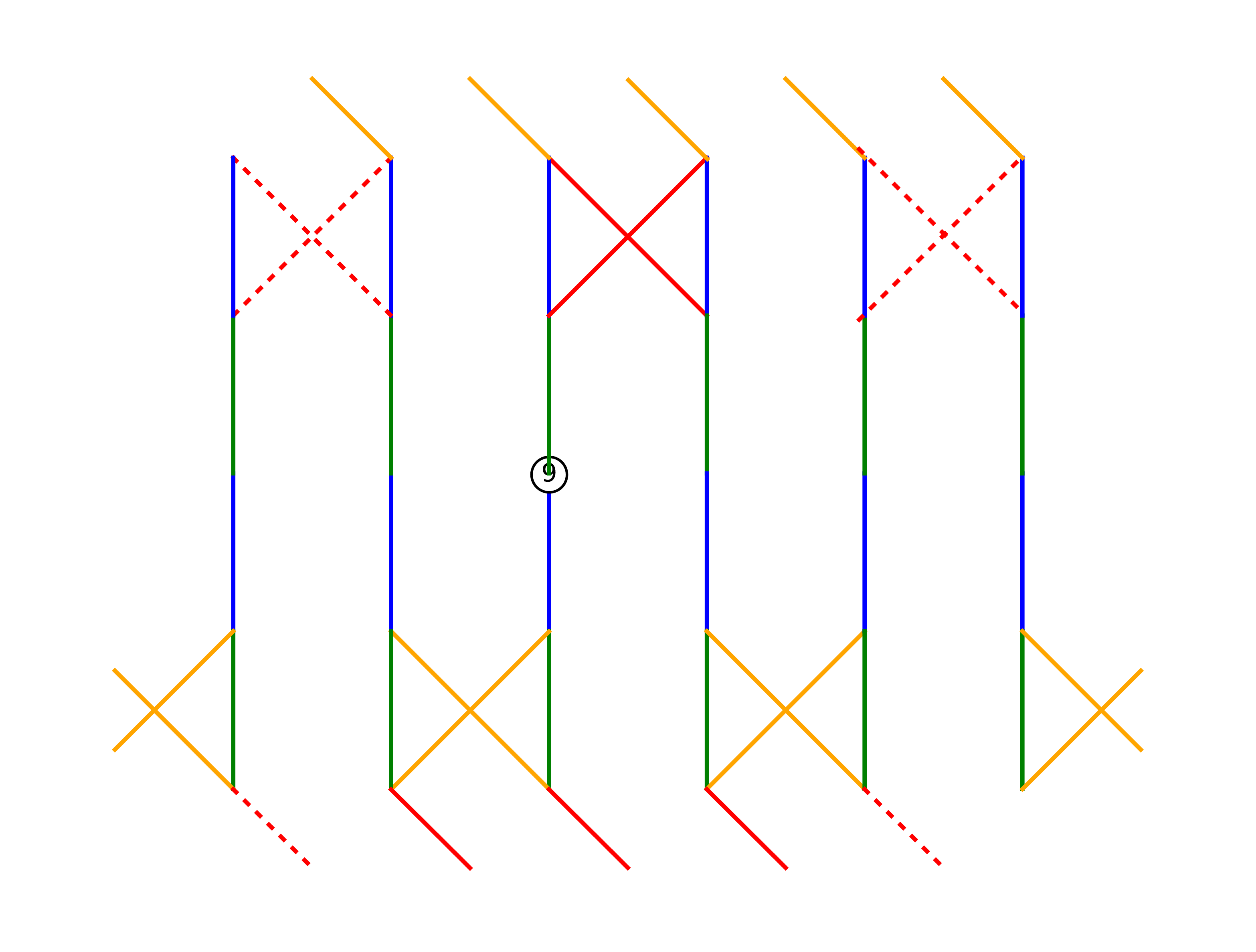}}\put(-2.46945547,-0.35744082){\color[rgb]{0,0,0}\makebox(0,0)[t]{\lineheight{1.25}\smash{\begin{tabular}[t]{c}$X_{(3,5,3,5)}^{\ell}$\end{tabular}}}}\put(-2.79939496,-0.35744082){\color[rgb]{0,0,0}\makebox(0,0)[t]{\lineheight{1.25}\smash{\begin{tabular}[t]{c}$X_{(3,5,3,5)}^{\ell-1}$\end{tabular}}}}\put(-2.12380458,-0.35744082){\color[rgb]{0,0,0}\makebox(0,0)[t]{\lineheight{1.25}\smash{\begin{tabular}[t]{c}$X_{(3,5,3,5)}^{\ell+1}$\end{tabular}}}}\put(0,0){\includegraphics[width=\unitlength,page=2]{3535.pdf}}\put(0.5,0.00392159){\color[rgb]{0,0,0}\makebox(0,0)[t]{\lineheight{1.25}\smash{\begin{tabular}[t]{c}$X_{(3,5,3,5)}^{\ell}$\end{tabular}}}}\put(0.21719468,0.00392159){\color[rgb]{0,0,0}\makebox(0,0)[rt]{\lineheight{1.25}\smash{\begin{tabular}[t]{r}$X_{(3,5,3,5)}^{\ell-1}$\end{tabular}}}}\put(0.78280532,0.00392159){\color[rgb]{0,0,0}\makebox(0,0)[lt]{\lineheight{1.25}\smash{\begin{tabular}[t]{l}$X_{(3,5,3,5)}^{\ell+1}$\end{tabular}}}}\put(0,0){\includegraphics[width=\unitlength,page=3]{3535.pdf}}\end{picture}\endgroup  \caption{The graph $\cX_{(3,5,3,5)}^{t}$.  }
\label{fig:3535}
\end{figure}

The graph $\cX_{(3,5,3,5)}^{t}$ shown in \cref{fig:3535} is the CPR-graph of a family of regular hypertopes with Coxeter diagram 
\begin{equation}\label{eq:cox3535}
\begin{tikzcd}[column sep=small]
\overset{\rho_{0}} {\textcolor{Red}{\bullet}} \arrow[d, dash] \arrow[r, dash, "5"]
& \overset{\rho_{3}}{\textcolor{Orange}{\bullet}} \arrow[d, dash] \\
\underset{\rho_{1}}{\textcolor{Green}\bullet} \arrow[r, dash, "5"']
& \underset{\rho_{2}}{\textcolor{Blue}{\bullet}}
\end{tikzcd}
\end{equation}

As in previous sections, the relations implied by the diagram above for the induced permutation group $G$ follow from \cref{lem:polygonalAction}. 
Note that in \cref{fig:3535} we identify the upper part of the figure with the lower one in such a way that the graph is drawn on the surface of a torus and not on a cylinder as in previous sections.
Also observe that the half-turn at the vertex $(9,0)$ is compatible with the identification and induces an automorphism $\varphi$ of the graph that swaps the colour $0$ with the colour $3$, and the colour $1$ with the colour $2$. 
This graph-automorphism induces a group-automorphism  $\bar{\varphi}: G\to G$ given by swapping the corresponding generators according to the vertical reflection of the diagram in \cref{eq:cox3535}.

It is easy to see that the period of Coxeter elements of $G_{0}$, $G_{1}$, $G_{2}$ and $G_{3}$ is $10$, implying that each of these subgroups are isomorphic to the Coxeter group $[5,3]$.
We use \cref{prop:IP} to prove that the group $G$ has the intersection property. 
To do so, we need to show that $G_{i} \cap G_{j} = G_{i,j}$ for every pair $\left\{ i,j \right\} \subset \left\{ 0,1,2,3\right\}  $.

For each pair $\left\{ i,j \right\} $ we use a similar argument as that in \cref{sec:3434}.
Let $x$ be the vertex $(1,0)$. 
Note that $\left| xG_{0} \right| = \left| x G_{3} \right| = 12 $, which implies that $ \left| \stab_{G_{0}}(x) \right| = \left| \stab_{G_{3}}(x) \right| = 10$. 
It follows that $\stab_{G_{0}} (x) = \stab_{G_{3}}(x) = \left\langle \rho_{1}, \rho_{2} \right\rangle$. 
Observe that $\stab_{G_{0}}(x) \cap \stab_{G_{1}}(x)$ is a subgroup of $\left\langle \rho_{1}, \rho_{2} \right\rangle $ containing $\rho_{2}$. 
Since $\left\langle \rho_{2} \right\rangle $  is a maximal subgroup in $\left\langle \rho_{1}, \rho_{2} \right\rangle $ and $\rho_{1} \not \in G_{1}$, then \[\stab_{G_{0}}(x) \cap \stab_{G_{1}}(x) = \left\langle \rho_{2}  \right\rangle. \]
Observe that
\[\begin{aligned}
&\begin{multlined}
xG_{0} = \{ (1, 0), (2, 0), (3, 0), (4, 0), (5, 0), (6, 0), (7, 0),\\ (8, 0), (9, 0), (10, 0), (11, 0), (12, 0)\}
\end{multlined} && \text{and} \\ 
&\begin{multlined}
xG_{1} = \{ (1, 0), (2, 0), (3, 0), (16, -1), (17, -1), (18, -1), (19, -1), \\ (20, -1), (21, -1), (22, -1), (23, -1), (24, -1)\}.
\end{multlined}
\end{aligned}\]
Since $\left| xG_{0} \cap xG_{1} \right| = 3 $ and $\left| \stab_{G_{0}}(x) \cap \stab_{G_{2}}(x) \right| = 2 $ it follows from \cref{eq:IP_CPR} that \[G_{0} \cap G_{1} = G_{0,1}.\]

Analogously, we can see that 
\[
\stab_{G_{0}}(x) \cap \stab_{G_{2}}(x) = \left\langle \rho_{1} \right\rangle. 
\] 

Note that $xG_{0} \cap xG_{2} = \left\{ (1,0), (2,0), (3,0), (4,0)  \right\} $. We will show that $x(G_{0} \cap G_{2})$ is actually $ \left\{ (1,0), (2,0)\right\} $. 
Observe that \cref{eq:IP_CPR} implies that $|G_{0} \cap G_{2}| \leq 8$. 
In particular, the group $G_{0,2}$ is a normal subgroup in $G_{0} \cap G_{2}$. 
Assume that there exists $\gamma \in G_{0} \cap G_{2}$ such that $(1,0) \gamma = (3,0)$. 
Consider the element $\gamma \rho_{1} \gamma^{-1}$. 
On one hand, $\gamma \rho_{1} \gamma^{-1} \in G_{0,2}$, which implies that $(1,0)\gamma \rho_{1} \gamma^{-1} \in \left\{ (1,0),(2,0) \right\} $.
On the other hand, 
\[(1,0)\gamma \rho_{1} \gamma^{-1} = (3,0) \rho_{1} \gamma^{-1} = (4,0)\gamma^{-1}.\]
It follows that $(2,0)\gamma = (4,0)$.
However, the elements in $G_{0}$ that map $(1,0)$ to $(3,0)$ are the elements in the coset $\left\langle \rho_{1},\rho_{2} \right\rangle \rho_{3}\rho_{2} $ and it can be seen that none of the coset elements map $(2,0)$ to $(4,0)$. 
Therefore $(3,0) \not\in x(G_{0} \cap G_{2}) $
This implies that $x(G_{0} \cap G_{2}) = \left\{ (1,0),(2,0) \right\} $ and from \cref{eq:IP_CPR} it follows that \[G_{0} \cap G_{2} = G_{0,2}.\]

Now let $x=(6,0)$. 
Observe that $\stab_{G_{0}}(x) = \left\langle \rho_{2}, \rho_{1}\rho_{2}\rho_{3}\rho_{2}\rho_{1}\right\rangle $. 
In a similar way as before, $\rho_{1}\rho_{2}\rho_{3}\rho_{2}\rho_{1} \not \in G_{3}$, which implies that \[\stab_{G_{0}}(x) \cap \stab_{G_{3}}(x) = \left\langle \rho_{2} \right\rangle.  \]
The condition \[G_{0} \cap G_{3} = G_{0,3}\] follows directly from \cref{eq:IP_CPR} by observing that  \[xG_{0} \cap xG_{3} = \left\{ (6, 0), (8, 0), (9, 0), (10, 0), (11, 0) \right\}. \]

Notice that for the vertex $x = (4,0)$ we have that $\stab_{G_{1}}(x) = \stab_{G_{2}} = \left\langle \rho_{1},\rho_{2} \right\rangle$.
Since $xG_{1} \cap xG_{2} = \left\{ (4,0) \right\} $, then \cref{eq:IP_CPR} implies that \[G_{1} \cap G_{2} = G_{1,2}.\]

The intersection property for the pairs $\left\{ 1,3 \right\} $ and $\left\{ 2,3 \right\} $ follows from the group automorphism $\bar{\varphi}$ described before, which maps $\rho_{0}$ to $\rho_{3}$ while swapping $\rho_{1}$ and $\rho_{2}$.

To prove that the group $G$ is flag transitive we use \cref{thm:FT_rank3} and a similar argument to that in \cref{sec:3435}. 
We want to prove that \[G_{i} \cap G_{j}G_{k} = (G_{i} \cap G_{j})(G_{i} \cap G_{k}).\]
Note that one inclusion is obvious. We show the other inclusion in detail for $i=1$, $j=0$ and $k=3$. 
The remaining cases follow an analogous idea.

We need to prove that \[G_{1} \cap G_{0} G_{3} \subset G_{1,0}G_{1,3}.\]
As in \cref{sec:3435}, note that 
\[G_{1} \cap G_{0} G_{3} = G_{1} \cap \left( \bigcup_{\alpha \in R} \alpha G_{3} \right) = \bigcup_{\alpha \in R} \left( G_{1} \cap \alpha G_{3} \right),\] 
where $R \subset G_{0}$ is a set of coset representatives of $G_{0,3}$. 
Since $G_{0}$ is isomorphic to the Coxeter group $[5,3]$ and $G_{0,3}$ is one of its maximal parabolic subgroups (of index $12$), the set $R$ can be chosen as 
\[\begin{multlined}
R = \left\{  
\id,
\rho_{3},
\rho_{2}\rho_{3},
\rho_{1}\rho_{2}\rho_{3},
\rho_{2}\rho_{1}\rho_{2}\rho_{3},
\rho_{3}\rho_{2}\rho_{1}\rho_{2}\rho_{3},
\rho_{1}\rho_{2}\rho_{1}\rho_{2}\rho_{3},
\rho_{3}\rho_{1}\rho_{2}\rho_{1}\rho_{2}\rho_{3},
\right. \\ \left. 
\rho_{2}\rho_{3}\rho_{1}\rho_{2}\rho_{1}\rho_{2}\rho_{3},
\rho_{1}\rho_{2}\rho_{3}\rho_{1}\rho_{2}\rho_{1}\rho_{2}\rho_{3},
\rho_{2}\rho_{1}\rho_{2}\rho_{3}\rho_{1}\rho_{2}\rho_{1}\rho_{2}\rho_{3},
\rho_{3}\rho_{2}\rho_{1}\rho_{2}\rho_{3}\rho_{1}\rho_{2}\rho_{1}\rho_{2}\rho_{3},
\right\}. 
\end{multlined}
\]

Since $G$ satisfies the intersection property, then $G_{1} \cap \id G_{3} = G_{1,3}$.
Because $\rho_{3} \in G_{1}$, then $G_{1} \cap \rho_{3} G_{3} = \rho_{3}\left( G_{1} \cap G_{3} \right) = \rho_{3}(G_{1,3})$.
Similarly, $G_{1} \cap \rho_{2}\rho_{3} G_{3} = \rho_{2}\rho_{3}\left( G_{1} \cap G_{3} \right) = \rho_{2}\rho_{3}(G_{1,3})$.

Observe that $\left| G_{1,0}G_{1,3} \right| = \frac{\left| G_{1,0} \right| \cdot \left| G_{1,3} \right|  }{\left( G_{1,0} \cap G_{1,3} \right)} = \frac{6 \cdot 4}{2} = 12 $. 
Since $\left| G_{1,3} \right| = 4 $, we have $\left| G_{1,3} \cup \rho_{3} G_{1,3} \cup \rho_{2}\rho_{3} G_{1,3} \right| = 12$.
It follows that we need to show that $G_{1} \cap \alpha G_{3} = \emptyset$ whenever $\alpha \in R \sm \left\{ \id, \rho_{3}, \rho_{2}\rho_{3} \right\} $. 
Moreover, 	if $\beta \in G_{1}$, we have \[G_{1} \cap \beta \alpha G_{3} = \beta \left( G_{1} \cap \alpha G_{3} \right).\] 
Therefore, it is sufficient to show that \[G_{1} \cap \alpha G_{3} = \emptyset\] for $\alpha \in \left\{ 
\rho_{1}\rho_{2}\rho_{3}, 
\rho_{1}\rho_{2}\rho_{1}\rho_{2}\rho_{3},
\rho_{1}\rho_{2}\rho_{3}\rho_{1}\rho_{2}\rho_{1}\rho_{2}\rho_{3}\right\} $.
To do so, for each of the group elements $\alpha$ we find a vertex $x$ such that the orbit of $x$ under $G_{1}$ and the orbit of $x \alpha$ under $G_{3}$ are disjoint. 

For $\alpha = \rho_{1}\rho_{2}\rho_{3}$ take the vertex $x = (3,0)$; the vertex $x \alpha$ is $(6,0)$. 
Observe that
\[\begin{multlined}
xG_{1} = \{ (1, 0), (2, 0), (3, 0), (16, -1), (17, -1), (18, -1), (19, -1),\\ (20, -1), (21, -1), (22, -1), (23, -1), (24, -1) \},
\end{multlined}
\]
and
\[\begin{multlined}
(x\alpha) G_{3} = \{ (6, 0), (8, 0), (9, 0), (10, 0), (11, 0), (13, 0), (14, 0), \\ (15, 0), (16, 0), (17, 0), (19, 0), (24, 0)\},
\end{multlined}
\] 
Which are disjoint as long as $t \geq 2$.

If $\alpha = \rho_{1}\rho_{2}\rho_{1}\rho_{2}\rho_{3}$ consider the vertex $x = (2,0)$ and for $\alpha = \rho_{1}\rho_{2}\rho_{3}\rho_{1}\rho_{2}\rho_{1}\rho_{2}\rho_{3}$ use the vertex $x = (1,0)$. 
In both cases the vertex $x\alpha $ is $ (6,0)$ and the corresponding orbits are those described above.
This shows that \[G_{1} \cap G_{0} G_{3} = (G_{1,0})(G_{1,3}).\]

We can do a similar analysis to show that \[G_{0} \cap G_{1}G_{2} = (G_{0,1})(G_{0,2}).\]
In other words, we can find a set of coset representatives $R\subset G_{1}$ of $G_{1,2}$  and prove that 
\[G_{0} \cap G_{1} G_{2} = G_{0} \cap \left( \bigcup_{\alpha \in R} \alpha G_{2} \right) = \bigcup_{\alpha \in R} \left( G_{0} \cap \alpha G_{2} \right).\]

Following an analogous argument to that of the previous case, the condition above reduces to show that $G_{0} \cap \alpha G_{2} = \emptyset$ for $\alpha \in \left\{ 
\rho_{0}\rho_{3}\rho_{2}, 
\rho_{0}\rho_{3}\rho_{0}\rho_{3}\rho_{2},
\rho_{0}\rho_{3}\rho_{2}\rho_{0}\rho_{3}\rho_{0}\rho_{3}\rho_{2}\right\} $.

For $\alpha = \rho_{0}\rho_{3}\rho_{2}$ we just need to consider the vertex $x=(1,0)$.
Then we have $x \alpha = (16,-1)$ and 
\[\begin{multlined}
xG_{0} = \{ 1, 0), (2, 0), (3, 0), (4, 0), (5, 0), (6, 0), (7, 0),  \\ (8, 0), (9, 0), (10, 0), (11, 0), (12, 0) \},
\end{multlined}
\]
and
\[\begin{multlined}
(x\alpha) G_{2} = \{ (5, -1), (6, -1), (7, -1), (8, -1), (9, -1), (10, -1), (11, -1), \\ (12, -1), (13, -1), (14, -1), (15, -1), (16, -1)\},
\end{multlined}
\] 
which are disjoint for $t \geq 2$. 

To show that $G_{0} \cap \alpha G_{2}=\emptyset$ for $\alpha=\rho_{0}\rho_{3}\rho_{0}\rho_{3}\rho_{2}$, we require a slightly more detailed analysis. 
Consider the set of vertices $V=\left\{ (1,0),(2,0), (3,0), (10,0) \right\} $. 
It is easy to verify that for each $x \in V$ \[xG_{0} \cap (x\alpha)G_{2}= W:= \left\{ (0,1), (0,2), (0,3), (0,4) \right\}. \]
This implies that if $\gamma \in G_{0} \cap \alpha G_{2}$, then $\gamma$ maps the set $V$ to the set $W$.

Assume that $\gamma$ permutes the vertices $(1,0)$, $(2,0)$, $(3,0)$.
Note that the subgroup $G_{0,1}=\left\langle \rho_{2}, \rho_{3} \right\rangle $ acts as the symmetric group on those vertices, which implies that there exists $\beta \in G_{0,1}$ such that $\gamma\beta$ is an element in $G_{0}$ that fixes $(1,0)$, $(2,0)$ and $(3,0)$. 
Observe that
\[\begin{aligned} 
\stab_{G_{0}}(1,0)&=\left\langle \rho_{1}, \rho_{2} \right\rangle, \\  
\stab_{G_{0}}(2,0)&=\left\langle \rho_{1}, \rho_{3}\rho_{2}\rho_{3} \right\rangle,  \\
\stab_{G_{0}}(3,0)&=\left\langle \rho_{3}, \rho_{2}\rho_{1}\rho_{2} \right\rangle.
\end{aligned}\] 
In particular, those subgroups have trivial intersection, which implies that $\gamma \in G_{0,1}$. 
However, the orbit of $(10,0)$ under is $G_{0,1}$ is $\left\{ (10,0), (11,0), (12,0) \right\} $. 
Therefore $\gamma$ cannot map $(10,0)$ to $(4,0)$, which is a contradiction. 

The previous discussion shows that if $\gamma \in G_{0} \cap \alpha G_{2}$, then 
\[(4,0)\gamma^{-1}\in\left\{ (1,0),(2,0),(3,.0) \right\}.\]
If $(1,0) \gamma = (4,0)$, then $\gamma = \beta \rho_{3} \rho_{2} \rho_{1}$
for some $\beta \in \left\langle \rho_{1}, \rho_{2} \right\rangle = \stab_{G_{0}}(1,0) $. 
Since $(10,0)\gamma \in \left\{ (1,0),(2,0),(3,0) \right\} $ then $\beta$ is either $\rho_{1}\rho_{2}\rho_{1}$ or $\rho_{1}\rho_{2}\rho_{1}\rho_{2}$. 
In the first case $(2,0)\gamma = (7,0)$ and in the second one $(2,0)\gamma = (8,0)$.
Similarly, if $(2,0) \gamma = (4,0)$, then $\gamma = \beta\rho_{2}\rho_{1}$ for some $\beta \in \rho_{3}\left\langle \rho_{1}, \rho_{2} \right\rangle \rho_{3} $. 
By looking at the action of $\gamma$ on $(10,0)$, it can be seen that the only possibilities for $\beta$ are $\rho_{3} \rho_{1} \rho_{2} \rho_{1} \rho_{3}$ and $\rho_{3} \rho_{1} \rho_{2} \rho_{1} \rho_{2} \rho_{3}$ which can be ruled out by looking at their action on $(1,0)$.
Finally if $(3,0)\gamma = (4,0)$, then $\gamma \in \left( \rho_{2} \rho_{3} \left\langle \rho_{1}, \rho_{2} \right\rangle \rho_{3}\rho_{2}  \right) \rho_{1}$, but none of those elements maps $(10,0)$ to $(1,0)$, $(2,0)$ or $(3,0)$. 

We can proceed in a similar way for $\alpha = \rho_{0}\rho_{3}\rho_{2}\rho_{0}\rho_{3}\rho_{0}\rho_{3}\rho_{2}$. 
In this case the sets $V$ and $W$ are $\left\{ (1,0), (2,0), (3,0), (7,0) \right\} $ and $\left\{ (1,0), (2,0), (3,0), (4,0) \right\} $. 
As before, it can be seen that any element $\gamma \in G_{0} \cap \alpha G_{2}$ must satisfy that $(4,0)\gamma^{-1} \in \left\{ (1,0), (2,0), (3,0) \right\} $ and explore all the possibilities for such elements in $G_{0}$ to verify that none of them maps $V$ to $W$. 

As a consequence of the discussion above, we have \[G_{0}\cap G_{1}G_{2} = (G_{0,1})(G_{0,2}).\]
The conditions 
\[\begin{aligned} 
G_{2} \cap G_{3} G_{0} &= (G_{2,3})(G_{2,0}) && \text{and} \\
G_{3}\cap G_{2}G_{1} &= (G_{3,2})(G_{3,1})
\end{aligned}\] 
follow from the action of the group automorphism $\bar{\varphi}$ that swaps
$\rho_{0}$ with $\rho_{3}$, and $\rho_{1}$ with $\rho_{2}$.

It follows from the discussion above and \cref{thm:FT_rank3}  that if $t\geq 2$, then the group $G$ is flag transitive. 
Flag transitivity for the case $t = 1$ can be easily verified using SageMath. 

\subsection{Type \texorpdfstring{$(3,3,3,3,4) $}{(3,3,3,3,4)}.} \label{sec:33334}

\begin{figure}
\def\svgwidth{\textwidth}
\begingroup \makeatletter \providecommand\color[2][]{\errmessage{(Inkscape) Color is used for the text in Inkscape, but the package 'color.sty' is not loaded}\renewcommand\color[2][]{}}\providecommand\transparent[1]{\errmessage{(Inkscape) Transparency is used (non-zero) for the text in Inkscape, but the package 'transparent.sty' is not loaded}\renewcommand\transparent[1]{}}\providecommand\rotatebox[2]{#2}\newcommand*\fsize{\dimexpr\f@size pt\relax}\newcommand*\lineheight[1]{\fontsize{\fsize}{#1\fsize}\selectfont}\ifx\svgwidth\undefined \setlength{\unitlength}{2550.58374023bp}\ifx\svgscale\undefined \relax \else \setlength{\unitlength}{\unitlength * \real{\svgscale}}\fi \else \setlength{\unitlength}{\svgwidth}\fi \global\let\svgwidth\undefined \global\let\svgscale\undefined \makeatother \begin{picture}(1,0.36089742)\lineheight{1}\setlength\tabcolsep{0pt}\put(0,0){\includegraphics[width=\unitlength,page=1]{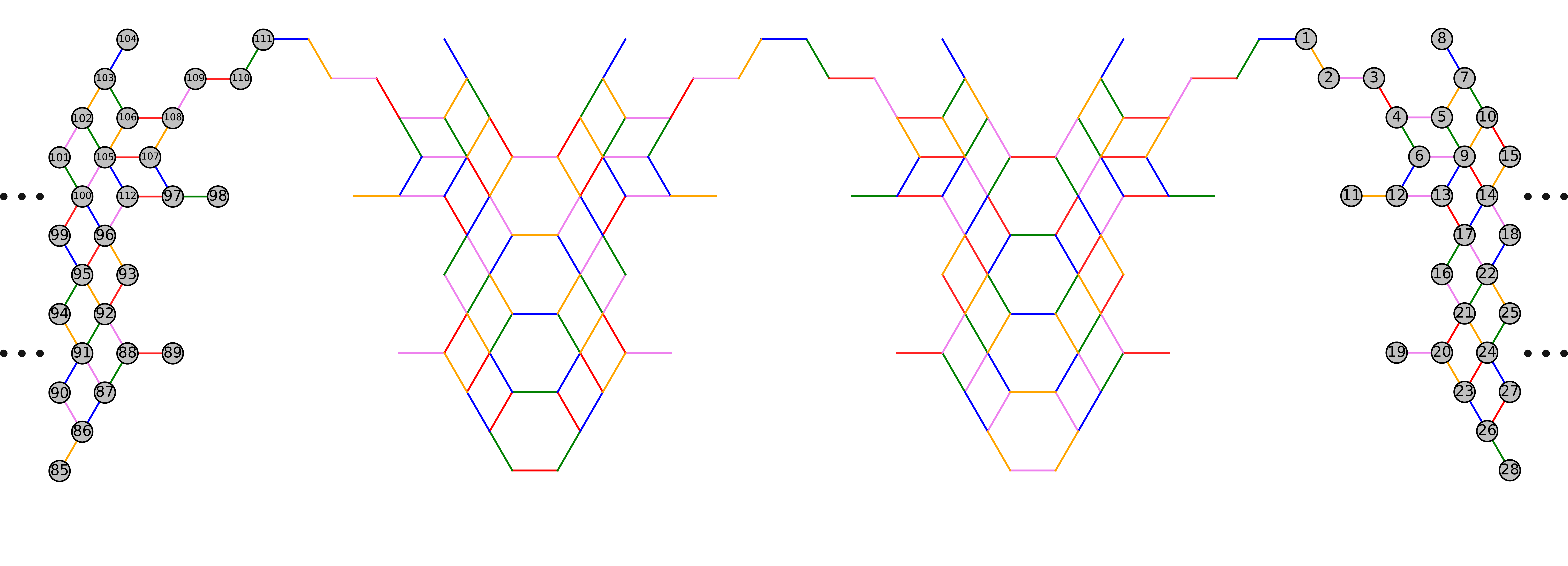}}\put(0.50000001,0.00165711){\color[rgb]{0,0,0}\makebox(0,0)[t]{\lineheight{1.25}\smash{\begin{tabular}[t]{c}$X_{(3,3,3,3,4)}^{\ell}$\end{tabular}}}}\put(0.05244872,0.00443422){\color[rgb]{0,0,0}\makebox(0,0)[t]{\lineheight{1.25}\smash{\begin{tabular}[t]{c}$X_{(3,3,3,3,4)}^{\ell-1}$\end{tabular}}}}\put(0.94755125,0.00443422){\color[rgb]{0,0,0}\makebox(0,0)[t]{\lineheight{1.25}\smash{\begin{tabular}[t]{c}$X_{(3,3,3,3,4)}^{\ell+1}$\end{tabular}}}}\put(0,0){\includegraphics[width=\unitlength,page=2]{33334.pdf}}\end{picture}\endgroup  \caption{The graph $\cX_{(3,3,3,3,4)}^{t}$.  }
\label{fig:33334}
\end{figure}

In \cref{fig:33334} we show the CPR-graph $\cX_{(3,3,3,3,4)}^{t}$ corresponding to the regular hypertopes of type $(3,3,3,3,4)$ and Coxeter diagram as in \cref{eq:cox33334}.
The graph $X_{(3,3,3,3,4)}$ we use to build this graph consist of $112$ vertices so that the vertex set of $\cX_{(3,3,3,3,4)}^{t}$ is $\left\{ 1, \dots, 112 \right\} \times \bZ_{t} $.
The graph $X_{(3,3,3,3,4)}$ admits a symmetry, shown as a vertical reflection in \cref{fig:33334}, that fixes the colour $2$ and swaps the colours $0$ with $4$ and $1$ with $3$.
This symmetry extends to the graph $\cX_{(3,3,3,3,4)}^{t}$, which induces a group automorphism $\bar{\varphi}$ of the associated permutation group $G$.
This group automorphism can be understood in terms of the Coxeter diagram in \cref{eq:cox33334} as the vertical reflection fixing the node labelled with $\rho_{2}$.

\begin{equation}\label{eq:cox33334}
\begin{tikzcd}[column sep=2.3pt, row sep=1.2em, ]
& \overset{\rho_{0}} {\textcolor{Red}{\bullet}} \arrow[ddl, dash] \arrow[rr, dash, "4"] & & \overset{\rho_{4}}{\textcolor{Violet}{\bullet}} \arrow[ddr, dash] &  \\ 
& & & & \\
\underset{\rho_{1}}{\textcolor{Green}\bullet} \arrow[drr, dash]
& & & & \underset{\rho_{3}}{\textcolor{Orange}{\bullet}} \arrow[dll, dash]\\
& &\underset{\rho_{2}}{\textcolor{Blue}{\bullet}} & & 
\end{tikzcd}
\end{equation}

The relations for the permutation group $G$ implicit in the Coxeter diagram are easy to verify using \cref{lem:polygonalAction}. 
The subgroups $G_{0}$ and $G_{4}$ are isomorphic to the Coxeter group $[3,3,3,3]$ of order $120$; the groups $G_{1}$ and $G_{3}$ are isomorphic to $[3,3,4]$ (of order $384$) whereas the subgroup $G_{2}$ is isomorphic to $[3,4,3]$ of order $1152$.
This can be seen as before, by looking at the period of the corresponding Coxeter elements, which are $5$ for $G_{0}$ and $G_{4}$, $8$ for $G_{1}$ and $G_{3}$ and $12$ for $G_{2}$.
Alternatively, one can see that the orbits of certain vertices induce known CPR-graphs for each of these groups. 
For example, the orbit of $(1,0)$ under $G_{2}$ consists of $24$ vertices and induces the CPR-graph for $[3,4,3]$ given by the action of the Coxeter group on the facets of a regular $24$-cell.
It follows that the group $G_{2}$ is isomorphic to $[3,4,3]$ (see \cref{rem:mix}).

Since $G_{i}$ is a C-group for every $i \in \left\{ 0,1,2,3,4 \right\} $, we can use \cref{lem:IP_CPR} to prove that the group $G$ itself satisfies the intersection property. 
In \cref{tab:IP_33334} we specify a vertex $x$ for six of the pairs $\left\{ i,j \right\} $ for which we can use \cref{eq:IP_gcd} to prove that \[G_{i}\cap G_{j} = G_{i,j}.\] In this table $s_{i}$ and $s_{j}$ denote $\left| \stab_{G_{i}}(x) \right| $ and $\left| \stab_{G_{j}}(x)\right| $, respectively. 
The condition for the remaining pairs follows from the action of the group automorphism $\bar{\varphi}$ described above.

\begin{table}
\begin{tabularx}{\textwidth}{| \cc{.6} | \cc{.6} | \cc{.8} | \cc{.6} | \cc{.6} | \cc{1.9} | \cc{1.9} | \cc{1} |}
    \hline
$i$ & $j$ & $x$ & $s_{i}$ & $s_{j}$ & $\gcd(s_{i},s_{j})$ & $|xG_{i}\cap xG_{j}|$ & $G_{i,j}$ \\ \hline
$0$ & $1$ & $(6,0)$ & $12$ & $16$ & $4$ & $6$ & $24$ \\ \hline
$0$ & $2$ & $(8,0)$ & $12$ & $1152$ & $12$ & $1$ & $12$ \\ \hline
$0$ & $3$ & $(11,0)$ & $12$ & $384$ & $12$ & $1$ & $12$ \\ \hline
$0$ & $4$ & $(3,0)$ & $24$ & $24$ & $24$ & $1$ & $24$ \\ \hline
$1$ & $2$ & $(8,0)$ & $48$ & $1152$ & $48$ & $1$ & $48$ \\ \hline
$1$ & $3$ & $(11,0)$ & $16$ & $384$ & $16$ & $1$ & $16$ \\ \hline

\end{tabularx}
\caption{Intersection property for $\cX_{(3,3,3,3,4)}^{t}$}
\label{tab:IP_33334}
\end{table}

To show that the group $G$ induces a regular hypertope it only remain to show that $G$ is flag-transitive.
To do so, we use \cref{lem:FT_CPR}.
More precisely, we show  in \cref{tab:FT_33334} a vertex $x$ for some of the triples $( i,j,k ) $ in such a way that the vertex satisfies \cref{eq:FT_CPR}.
In \cref{tab:IP_33334}, $o_{i,j,k}$ denotes the size of $xG_{i}\cap xG_{j}G_{k}$ and $s_{i} = \left| \stab_{G_{i}}(x) \right| $.
The condition in \cref{eq:FT_CPR} in the notation of \cref{tab:FT_33334} is \[o_{i,j,k} \cdot s_{i} \leq \frac{|G_{i,j}|\cdot |G_{i,k}|}{|G_{i,j,k}|}.\]
In a similar way as with the intersection property, the condition $G_{i}\cap G_{j}G_{k}= G_{i,j}G_{i,k}$ for the subsets $\left\{ i,j,k \right\} $ not listed in \cref{tab:FT_33334} follows from the action of the group automorphism $\bar{\varphi}$.

\begin{table}
\begin{tabularx}{\textwidth}{| \cc{1} | \cc{1} | \cc{1} | \cc{1} | \cc{1} | \cc{1} | \cc{1} | \cc{1} | \cc{1} |}
\hline
$i$ & $j$ & $k$ & $x$ & $o_{i,j,k}$ & $s_{i}$ & $|G_{i,j}|$ & $|G_{i,k}|$ & $|G_{i,j,k}|$\\ \hline 
$0$ & $1$ & $2$ & $(16, 0)$ & $8$ & $6$ & $24$ & $12$ & $6$ \\ \hline 
$0$ & $1$ & $3$ & $(16, 0)$ & $12$ & $6$ & $24$ & $12$ & $4$ \\ \hline 
$0$ & $1$ & $4$ & $(16, 0)$ & $16$ & $6$ & $24$ & $24$ & $6$ \\ \hline 
$0$ & $2$ & $3$ & $(8, 0)$ & $3$ & $12$ & $12$ & $12$ & $4$ \\ \hline 
$0$ & $2$ & $4$ & $(8, 0)$ & $6$ & $12$ & $12$ & $24$ & $4$ \\ \hline 
$1$ & $2$ & $3$ & $(8, 0)$ & $2$ & $48$ & $48$ & $16$ & $8$ \\ \hline 
\end{tabularx}
\caption{Flag transitivity for $\cX_{(3,3,3,3,4)}^{t}$}
\label{tab:FT_33334}
\end{table}

Notably, the arguments used above for the group $G$ induced by the graph $\cX_{(3,3,3,3,4)}^{t}$ hold even when $t=1$.
\cref{thm:CGroupFlagTrans_Hypertope} implies that the permutation group $G$ induced by the graph  $\cX_{(3,3,3,3,4)}^{t}$ is the automorphism group of a regular hypertope with Coxeter diagram as in \cref{eq:cox33334}.

\subsection{Type \texorpdfstring{$ \{ 5, {}_{3}^{3} \} $}{ \{5, 3-3\} }.} \label{sec:5-33}

In this section we the build proper $4$-edge-coloured graphs $\cX_{\{ 5, {}_{3}^{3} \}}^{t}$ corresponding to a family of regular hypertopes with Coxeter diagram as in \cref{eq:cox5-33}.

\begin{equation}\label{eq:cox5-33}
\begin{tikzcd}[column sep=1em, row sep=.3em, ]
 	& & [-0.3em] \overset{\rho_{2}}{\textcolor{Blue}{\bullet}} \arrow[dddl, dash] \\
 	& & \\
 	& & \\
 	\underset{\rho_{0}} {\textcolor{Red}{\bullet}} \arrow[r, dash, "5"]
 	&\underset{\rho_{1}}{\textcolor{Green}\bullet} \arrow[dr, dash] & \\
 	& & [-0.3em] \underset{\rho_{3}}{\textcolor{Orange}{\bullet}} 
\end{tikzcd}
\end{equation}

To build the graph $X_{\{ 5, {}_{3}^{3} \}}$ we follow a slightly different method from the one used in previous sections.
Consider the graph in \cref{fig:5-33_A}.
This graph is a CPR-graph for the Coxeter group $\{5,3\}$. 
In fact, this is the CPR-graph induced by the action of such group on the faces of the dodecahedron. 
The graph $X_{\{ 5, {}_{3}^{3} \}}$ is built as follows. 
Take two copies of the graph in \cref{fig:5-33_A}, one with vertex set $\left\{ 1, \dots, 12 \right\} $ and the other one with vertex set $\left\{ 13, \dots, 24 \right\} $ so that if $i,j\in\left\{ 1,\dots,12 \right\} $ then there is an edge connecting $i$ and $j$ if and only if there is an edge connecting $i+12$ and $j+12$. 
Assume that $i$ and $j$ are connected by an edge marked with a cross (such as the edge connecting $5$ and $6$ in \cref{fig:5-33_A}).
This means that the there is an edge in the graph $X_{\{ 5, {}_{3}^{3} \}}$ (of the same colour as the cross) connecting the vertex $i$ with the vertex $j'=j+12$ and another edge connecting the vertex $j$ with the vertex $i'=i+12$ as in \cref{fig:5-33_B}.
The the graph $X_{\{ 5, {}_{3}^{3} \}}$ is shown in \cref{fig:5-33_C}.

\begin{figure}
	\centering
	\begin{subfigure}[b]{0.75\textwidth}
		\centering
			\begin{scriptsize}
\def\svgwidth{\textwidth}
		\begingroup \makeatletter \providecommand\color[2][]{\errmessage{(Inkscape) Color is used for the text in Inkscape, but the package 'color.sty' is not loaded}\renewcommand\color[2][]{}}\providecommand\transparent[1]{\errmessage{(Inkscape) Transparency is used (non-zero) for the text in Inkscape, but the package 'transparent.sty' is not loaded}\renewcommand\transparent[1]{}}\providecommand\rotatebox[2]{#2}\newcommand*\fsize{\dimexpr\f@size pt\relax}\newcommand*\lineheight[1]{\fontsize{\fsize}{#1\fsize}\selectfont}\ifx\svgwidth\undefined \setlength{\unitlength}{585.09967041bp}\ifx\svgscale\undefined \relax \else \setlength{\unitlength}{\unitlength * \real{\svgscale}}\fi \else \setlength{\unitlength}{\svgwidth}\fi \global\let\svgwidth\undefined \global\let\svgscale\undefined \makeatother \begin{picture}(1,0.17442715)\lineheight{1}\setlength\tabcolsep{0pt}\put(0,0){\includegraphics[width=\unitlength,page=1]{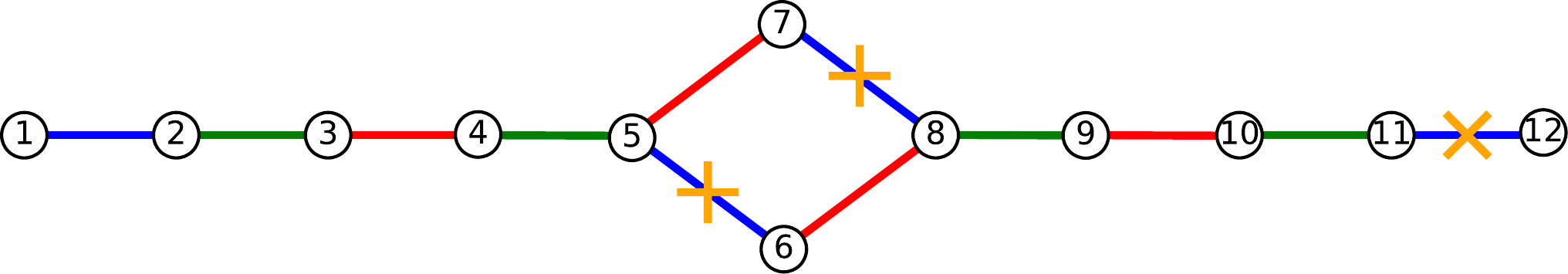}}\end{picture}\endgroup  		\end{scriptsize}
		\caption{Half of the graph $X_{\{ 5, {}_{3}^{3} \}}$}\label{fig:5-33_A}
	\end{subfigure}

	\begin{subfigure}[b]{0.45\textwidth}
		\centering
			\begin{scriptsize}
\def\svgwidth{\textwidth}
		\begingroup \makeatletter \providecommand\color[2][]{\errmessage{(Inkscape) Color is used for the text in Inkscape, but the package 'color.sty' is not loaded}\renewcommand\color[2][]{}}\providecommand\transparent[1]{\errmessage{(Inkscape) Transparency is used (non-zero) for the text in Inkscape, but the package 'transparent.sty' is not loaded}\renewcommand\transparent[1]{}}\providecommand\rotatebox[2]{#2}\newcommand*\fsize{\dimexpr\f@size pt\relax}\newcommand*\lineheight[1]{\fontsize{\fsize}{#1\fsize}\selectfont}\ifx\svgwidth\undefined \setlength{\unitlength}{436.94616699bp}\ifx\svgscale\undefined \relax \else \setlength{\unitlength}{\unitlength * \real{\svgscale}}\fi \else \setlength{\unitlength}{\svgwidth}\fi \global\let\svgwidth\undefined \global\let\svgscale\undefined \makeatother \begin{picture}(1,0.34266687)\lineheight{1}\setlength\tabcolsep{0pt}\put(0,0){\includegraphics[width=\unitlength,page=1]{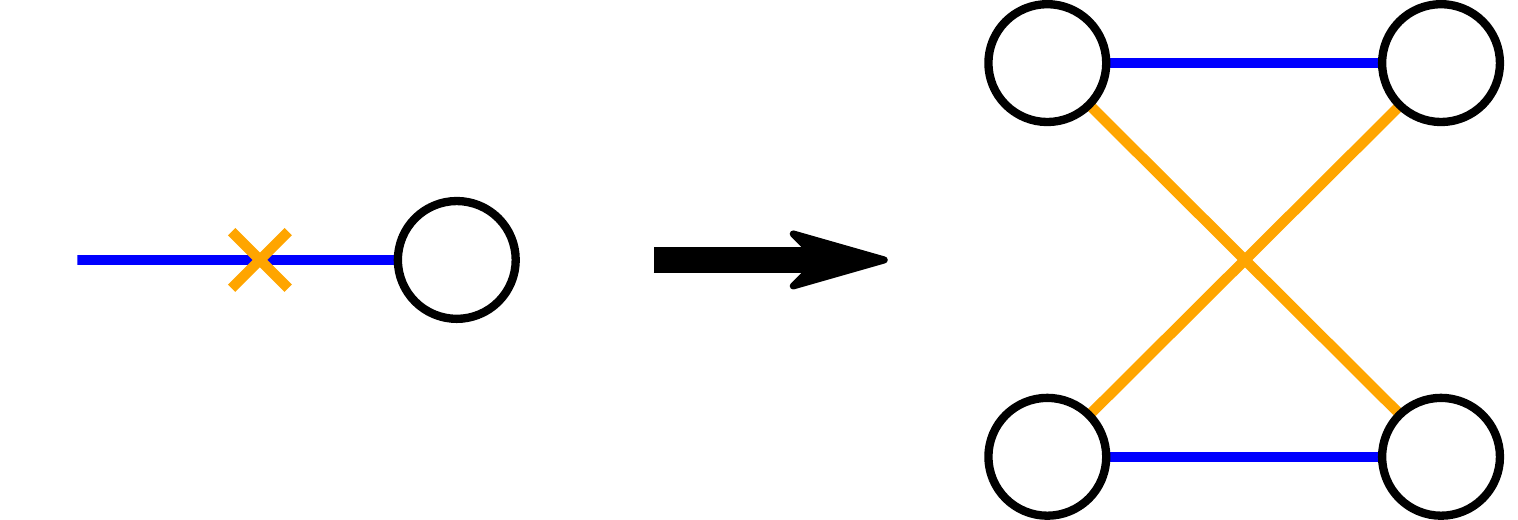}}\put(0.69260531,0.29056303){\color[rgb]{0,0,0}\makebox(0,0)[t]{\lineheight{1.25}\smash{\begin{tabular}[t]{c}\textit{$i$}\end{tabular}}}}\put(0.9521018,0.29056296){\color[rgb]{0,0,0}\makebox(0,0)[t]{\lineheight{1.25}\smash{\begin{tabular}[t]{c}\textit{$j$}\end{tabular}}}}\put(0.69260553,0.03106699){\color[rgb]{0,0,0}\makebox(0,0)[t]{\lineheight{1.25}\smash{\begin{tabular}[t]{c}\textit{$i'$}\end{tabular}}}}\put(0.95039204,0.03211463){\color[rgb]{0,0,0}\makebox(0,0)[t]{\lineheight{1.25}\smash{\begin{tabular}[t]{c}\textit{$j'$}\end{tabular}}}}\put(0,0){\includegraphics[width=\unitlength,page=2]{5-33_B.pdf}}\put(0.04386499,0.16081512){\color[rgb]{0,0,0}\makebox(0,0)[t]{\lineheight{1.25}\smash{\begin{tabular}[t]{c}\textit{$i$}\end{tabular}}}}\put(0.30336126,0.16081512){\color[rgb]{0,0,0}\makebox(0,0)[t]{\lineheight{1.25}\smash{\begin{tabular}[t]{c}\textit{$j$}\end{tabular}}}}\end{picture}\endgroup  		\end{scriptsize}
		\caption{}\label{fig:5-33_B}
	\end{subfigure}
	
	\begin{subfigure}[b]{0.75\textwidth}
		\centering
			\begin{scriptsize}
\def\svgwidth{\textwidth}
		\begingroup \makeatletter \providecommand\color[2][]{\errmessage{(Inkscape) Color is used for the text in Inkscape, but the package 'color.sty' is not loaded}\renewcommand\color[2][]{}}\providecommand\transparent[1]{\errmessage{(Inkscape) Transparency is used (non-zero) for the text in Inkscape, but the package 'transparent.sty' is not loaded}\renewcommand\transparent[1]{}}\providecommand\rotatebox[2]{#2}\newcommand*\fsize{\dimexpr\f@size pt\relax}\newcommand*\lineheight[1]{\fontsize{\fsize}{#1\fsize}\selectfont}\ifx\svgwidth\undefined \setlength{\unitlength}{585.20471191bp}\ifx\svgscale\undefined \relax \else \setlength{\unitlength}{\unitlength * \real{\svgscale}}\fi \else \setlength{\unitlength}{\svgwidth}\fi \global\let\svgwidth\undefined \global\let\svgscale\undefined \makeatother \begin{picture}(1,0.36830293)\lineheight{1}\setlength\tabcolsep{0pt}\put(0,0){\includegraphics[width=\unitlength,page=1]{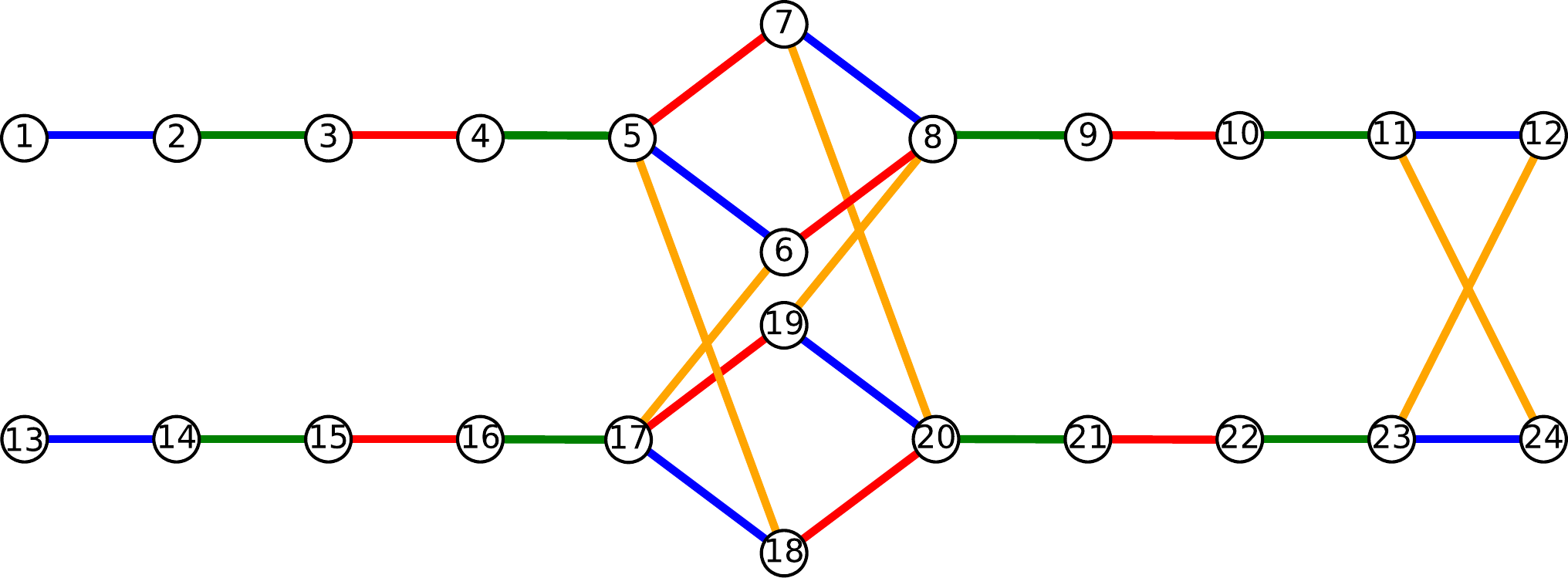}}\end{picture}\endgroup  		\end{scriptsize}
		\caption{The graph $X_{\{ 5, {}_{3}^{3} \}}$}\label{fig:5-33_C}
	\end{subfigure}
	\caption{}
\end{figure}

Finally, to build the graph $\cX_{\{ 5, {}_{3}^{3} \}}^{t}$ we just proceed as in previous sections by taking as vertex set the set $\left\{ 1, \dots, 24 \right\}\times \bZ_{t} $ and the (solid) edges between the vertices $(i, \ell)$ and  $(j,\ell)$ whenever $i$ and $j$ are connected in $X_{\{ 5, {}_{3}^{3} \}}$ together with the (dotted) edges connecting $(13,\ell)$ with $(2,\ell + 1)$, and $(14,\ell)$ with $(1, \ell +1)$ for every $\ell \in \bZ_{t}$. 
This graph $\cX_{\{ 5, {}_{3}^{3} \}}^{t}$ is shown in \cref{fig:5-33}.

\begin{figure}
\def\svgwidth{\textwidth}
\begingroup \makeatletter \providecommand\color[2][]{\errmessage{(Inkscape) Color is used for the text in Inkscape, but the package 'color.sty' is not loaded}\renewcommand\color[2][]{}}\providecommand\transparent[1]{\errmessage{(Inkscape) Transparency is used (non-zero) for the text in Inkscape, but the package 'transparent.sty' is not loaded}\renewcommand\transparent[1]{}}\providecommand\rotatebox[2]{#2}\newcommand*\fsize{\dimexpr\f@size pt\relax}\newcommand*\lineheight[1]{\fontsize{\fsize}{#1\fsize}\selectfont}\ifx\svgwidth\undefined \setlength{\unitlength}{1241.40881348bp}\ifx\svgscale\undefined \relax \else \setlength{\unitlength}{\unitlength * \real{\svgscale}}\fi \else \setlength{\unitlength}{\svgwidth}\fi \global\let\svgwidth\undefined \global\let\svgscale\undefined \makeatother \begin{picture}(1,0.43907069)\lineheight{1}\setlength\tabcolsep{0pt}\put(0,0){\includegraphics[width=\unitlength,page=1]{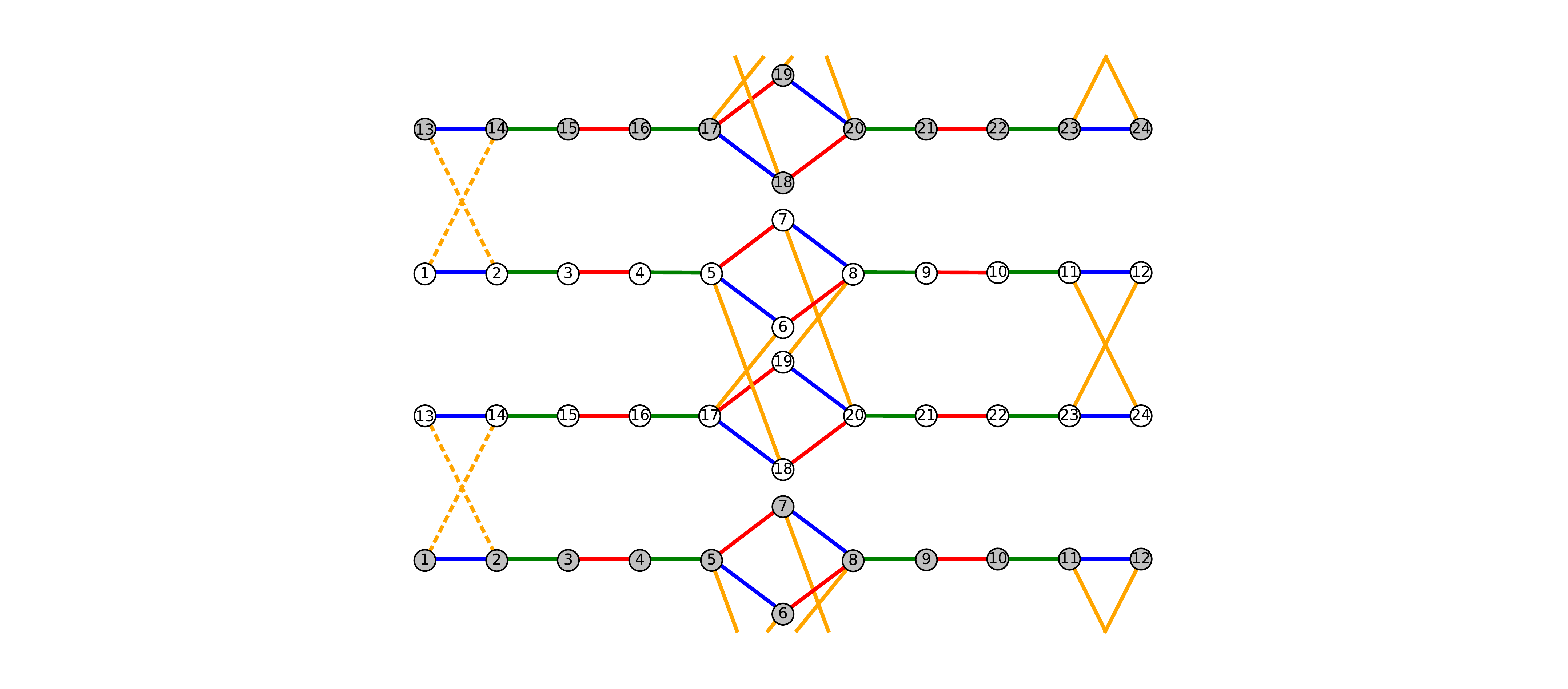}}\put(1.63028812,-0.46528969){\color[rgb]{0,0,0}\makebox(0,0)[t]{\lineheight{1.25}\smash{\begin{tabular}[t]{c}$X_{(3,5,3,5)}^{\ell}$\end{tabular}}}}\put(1.53895173,-0.46451837){\color[rgb]{0,0,0}\makebox(0,0)[rt]{\lineheight{1.25}\smash{\begin{tabular}[t]{r}$X_{(3,5,3,5)}^{\ell-1}$\end{tabular}}}}\put(1.72162456,-0.46451837){\color[rgb]{0,0,0}\makebox(0,0)[lt]{\lineheight{1.25}\smash{\begin{tabular}[t]{l}$X_{(3,5,3,5)}^{\ell+1}$\end{tabular}}}}\put(0.22599072,0.35673779){\color[rgb]{0,0,0}\makebox(0,0)[rt]{\lineheight{1.25}\smash{\begin{tabular}[t]{r}$X_{\{5, {}_{3}^{3}\} }^{\ell-1}$\end{tabular}}}}\put(0,0){\includegraphics[width=\unitlength,page=2]{5-33_D.pdf}}\put(0.22599072,0.08272855){\color[rgb]{0,0,0}\makebox(0,0)[rt]{\lineheight{1.25}\smash{\begin{tabular}[t]{r}$X_{\{5, {}_{3}^{3} \}}^{\ell+1}$\end{tabular}}}}\put(0.22737219,0.21658166){\color[rgb]{0,0,0}\makebox(0,0)[rt]{\lineheight{1.25}\smash{\begin{tabular}[t]{r}$X_{\{5, {}_{3}^{3}\} }^{\ell}$\end{tabular}}}}\put(0,0){\includegraphics[width=\unitlength,page=3]{5-33_D.pdf}}\end{picture}\endgroup  \caption{The graph $\cX_{\{ 5, {}_{3}^{3} \}}^{t}$.  }
\label{fig:5-33}
\end{figure}

Now consider the induced permutation group $G$. 
Observe that all $\left\{ 0,1,2 \right\}$-componentes of the graph $\cX_{\{ 5, {}_{3}^{3} \}}$ are isomorphic to the graph shown in \cref{fig:5-33_A}. 
This implies that the subgroup $G_{3}$ is actually isomorphic to the Coxeter group $[5,3]$. 
Note that the $\left\{ 1,2,3 \right\}$-components of the resulting graph are isomorphic. 
More precisely, all of them are the CPR-graph induced by the action of the Coxeter group $[3,3]$ on the edges of the tetrahedron.
This implies that the subgroup $G_{0}$ is isomorphic to the group $[3,3]$.
Finally, note that the $\left\{ 2,3 \right\}$-components of $\cX_{\{ 5, {}_{3}^{3} \}}$  are either alternating squares of isolated vertices. 
\cref{lem:polygonalAction} implies that the permutations $\rho_{2}$ and $\rho_{3}$ commute.

Consider the involution $\varphi$ of the vertex set of $\cX_{\{ 5, {}_{3}^{3} \}}$ given by 
\[\begin{aligned} 
	(1,\ell) &\leftrightarrow (13,\ell-1), \\ 
	(6,\ell) &\leftrightarrow (18,\ell),  \\
	(7,\ell) &\leftrightarrow (19,\ell),  \\
	(8,\ell) &\leftrightarrow (20,\ell),  \\
	(9,\ell) &\leftrightarrow (21,\ell),  \\
	(10,\ell) &\leftrightarrow (22,\ell), \\
	(11,\ell) &\leftrightarrow (23,\ell), \\
	(12,\ell) &\leftrightarrow (23,\ell), 
\end{aligned}\] 
for every $\ell \in \bZ_{t}$, while fixing all the other vertices.
This involution induces a graph-automorphism that swaps the edges of colour $2$ with the edges of colour $3$, which in turn induces a group automorphism $\bar{\varphi}: G \to G$ that swaps the generators $\rho_{2}$ and $\rho_{3}$, and fixes $\rho_{0}$ and $\rho_{1}$.
This automorphism can be seen as horizontal reflection in the diagram in \cref{eq:cox5-33}. 

The automorphism $\bar{\varphi}$ maps the subgroup $G_{2}$ to the subgroup $G_{3}$, which implies that the latter is also isomorphic to the Coxeter group $[5,3]$.
The discussion above shows that not only the group $G$ satisfies the relations implicit by the Coxeter diagram in \cref{eq:cox5-33}, but also that the subgroups $G_{i}$, $i \in \left\{ 0,1,2,3 \right\} $ satisfy the intersection property.
As before, we use \cref{prop:IP} to show that $G$ satisfies the intersection property. 
Therefore, we only need to show that for every pair $i,j \in \left\{ 0,1,2,3 \right\} $ the equality $G_{i} \cap G_{j}=G_{i,j}$ holds. 

According to \cref{lem:IP_CPR} we only need to find a vertex $x$ for each pair $i,j$ that satisfies that \[\left| xG_{i} \cap xG_{j} \right|\cdot \gcd\left( |\stab_{G_{i}}(x)|,|\stab_{G_{j}}(x)| \right)\leq \left| G_{i,j} \right|.  \]
The corresponding vertices are listed below and the inequality above can be easily checked from the graph.
\begin{itemize}
	\item $x=(3,0)$ for $\left\{ i,j \right\} =\left\{ 0,1 \right\} $, 
	\item $x=(1,0)$ for $\left\{ i,j \right\} =\left\{ 0,2 \right\} $, 
	\item $x=(1,0)$ for $\left\{ i,j \right\} =\left\{ 1,2 \right\} $, 
	\item $x=(1,0)$ for $\left\{ i,j \right\} =\left\{ 2,3 \right\} $. 
\end{itemize}
The condition for the pairs $\left\{ 0,3 \right\} $ and $\left\{ 1,3 \right\} $ follows from the action of the group automorphism $\bar{\varphi}$.

Now we need to prove that the group $G$ is flag transitive. 
According to \cref{thm:FT_rank3} and \cref{lem:FT_Group} we just need to prove that \[G_{i} \cap G_{j}G_{k} = (G_{i,j})(G_{i,k})\] for every subset $\left\{ i,j,k \right\} \subset \left\{ 0,1,2,3 \right\} $.
As before, we can use \cref{lem:FT_CPR} to prove the condition for some of the subsets $\left\{ i,j,k \right\} $. 
Given $i$, $j$ and $k$, we need a vertex $x$ such that 
\begin{equation}\label{eq:FT_5-33}
\left|\left(  x G_i \cap x G_jG_k \right) \right|  \left|\stab_{G_i}(x)\right| \leq \left| (G_{i,j})(G_{i,k}) \right| = \frac{|G_{i,j}|\cdot|G_{i,k}|}{2} 
\end{equation}

We list the corresponding vertices for three of the four possible subsets. 
\begin{itemize}
	\item $x=(3,0)$ if $(i,j,k)=(0,1,2)$,
	\item $x=(3,0)$ if $(i,j,k)=(0,1,3)$,
	\item $x=(1,0)$ if $(i,j,k)=(1,2,3)$.
\end{itemize}

If $\left\{ i,j,k \right\} = \{0,2,3\}$ then we cannot find such vertex $x$.
We will show that \[G_{0} \cap G_{2}G_{3} = (G_{0,2})(G_{0,3})\] using the technique described in detail in \cref{sec:3435}.
Observe that \[G_{0}\cap G_{2}G_{3} = \bigcup_{\alpha \in R}\left( G_{0} \cap \alpha G_{3} \right),\] where $R$ is a set of coset representatives of $G_{2,3}$ in $G_{2}$. In particular $R$ can be taken as 
\[
  \begin{multlined}
\{ 
\id,
\rho_{3},
\rho_{1}\rho_{3},
\rho_{0}\rho_{1}\rho_{3},
\rho_{1}\rho_{0}\rho_{1}\rho_{3},
\rho_{3}\rho_{1}\rho_{0}\rho_{1}\rho_{3},
\rho_{0}\rho_{1}\rho_{0}\rho_{1}\rho_{3}, 
\rho_{3}\rho_{0}\rho_{1}\rho_{0}\rho_{1}\rho_{3},
\\
\rho_{1}\rho_{3}\rho_{0}\rho_{1}\rho_{0}\rho_{1}\rho_{3},
\rho_{0}\rho_{1}\rho_{3}\rho_{0}\rho_{1}\rho_{0}\rho_{1}\rho_{3},
\rho_{1}\rho_{0}\rho_{1}\rho_{3}\rho_{0}\rho_{1}\rho_{0}\rho_{1}\rho_{3},
\rho_{3}\rho_{1}\rho_{0}\rho_{1}\rho_{3}\rho_{0}\rho_{1}\rho_{0}\rho_{1}\rho_{3} \}. 
\end{multlined}
\]

Observe that \[\left| G_{0} \cap G_{3} \right| = \left| G_{0} \cap \rho_{3}G_{3} \right| = \left| G_{0} \cap \rho_{1}\rho_{3}G_{3} \right| = 6,  \]
Since $\left| (G_{0,2})(G_{0,3}) \right| = 18 $, we need to show that $G_{0} \cap \alpha G_{3} = \emptyset$ for $\alpha \in R \sm \left\{ \id, \rho_{3}, \rho_{1}\rho_{3} \right\} $.
In fact, if $\beta \in G_{0}$, then $\left| G_{0} \cap \alpha G_{3} \right| = \left| G_{0} \cap \beta \alpha G_{3} \right| $. 
Therefore, it is enough to show that $G_{0} \cap \alpha G_{3} = \emptyset$ for $\alpha \in \left\{  
\rho_{0}\rho_{1}\rho_{3},
\rho_{0}\rho_{1}\rho_{0}\rho_{1}\rho_{3}, 
\rho_{0}\rho_{1}\rho_{3}\rho_{0}\rho_{1}\rho_{0}\rho_{1}\rho_{3}
\right\} $. 

To prove that  $G_{0} \cap \alpha G_{3} = \emptyset$ for the mentioned group elements $\alpha$, we exhibit a vertex $x$ such that the orbit of $x$ under $G_{0}$ and the orbit of $x \alpha$ under $G_{3}$ are disjoint. 

The vertices $x$ are listed below.
\begin{itemize}
	\item $x = (3,0)$ if $\alpha = \rho_{0}\rho_{1}\rho_{3}$,
	\item $x = (2,0)$ if $\alpha= \rho_{0}\rho_{1}\rho_{0}\rho_{1}\rho_{3}$,
	\item $x = (13,-1)$ if $\alpha = \rho_{0}\rho_{1}\rho_{3}\rho_{0}\rho_{1}\rho_{0}\rho_{1}\rho_{3}$.
\end{itemize}
For each case, the vertex $x$ satisfies that $x \alpha = (18,0)$.
Moreover, those three vertices belong to the same orbit under $G_{0}$ which is disjoint to the orbit of $(18,0)$ under $G_{3}$ as long as $t\geq 2$.

Flag transitivity and the intersection property for the permutation group induced by $\cX_{\{ 5, {}_{3}^{3} \}}^{1}$ can be easily checked using SageMath.

\subsection{Type \texorpdfstring{$ \{ 5, 3, {}_{3}^{3} \} $}{ \{5, 3, 3-3\} }.} \label{sec:53-33}

\begin{figure}
\def\svgwidth{\textwidth}
\begingroup \makeatletter \providecommand\color[2][]{\errmessage{(Inkscape) Color is used for the text in Inkscape, but the package 'color.sty' is not loaded}\renewcommand\color[2][]{}}\providecommand\transparent[1]{\errmessage{(Inkscape) Transparency is used (non-zero) for the text in Inkscape, but the package 'transparent.sty' is not loaded}\renewcommand\transparent[1]{}}\providecommand\rotatebox[2]{#2}\newcommand*\fsize{\dimexpr\f@size pt\relax}\newcommand*\lineheight[1]{\fontsize{\fsize}{#1\fsize}\selectfont}\ifx\svgwidth\undefined \setlength{\unitlength}{2215.12329102bp}\ifx\svgscale\undefined \relax \else \setlength{\unitlength}{\unitlength * \real{\svgscale}}\fi \else \setlength{\unitlength}{\svgwidth}\fi \global\let\svgwidth\undefined \global\let\svgscale\undefined \makeatother \begin{picture}(1,0.22177641)\lineheight{1}\setlength\tabcolsep{0pt}\put(0,0){\includegraphics[width=\unitlength,page=1]{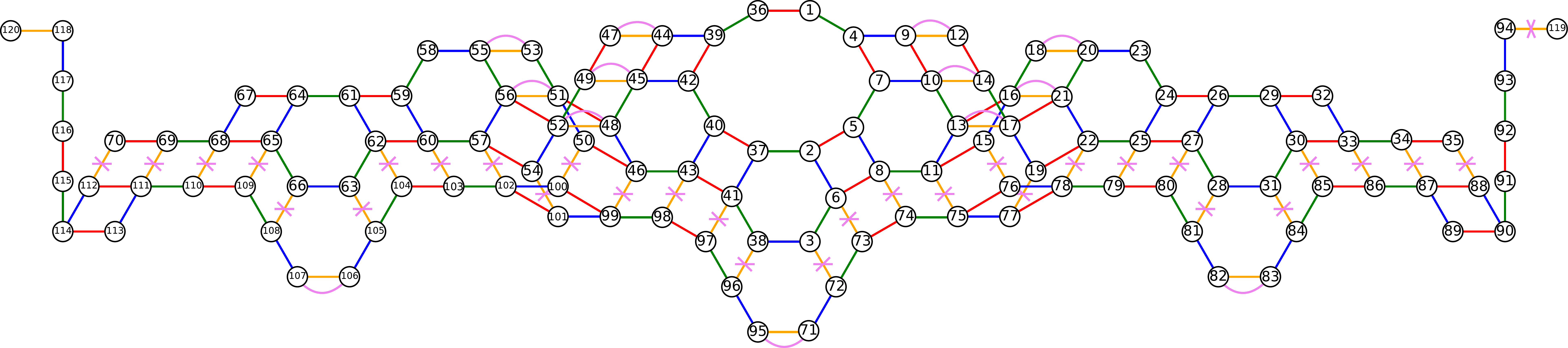}}\end{picture}\endgroup  \caption{Half of the graph $X_{\{ 5, 3, {}_{3}^{3} \}}$.  }
\label{fig:53-33_A}
\end{figure}

The graphs $\cX^{t}_{ \{ 5, 3, {}_{3}^{3} \}}$ associated with the regular hypertopes of type $ \{ 5, 3, {}_{3}^{3} \}$ are built from the graph in \cref{fig:53-33_A} following a similar idea as the one used in the previous section.
The graph in \cref{fig:5-33_A} has $120$ vertices and if we remove the edges of colour $4$ it is in fact the CPR-graph induced by the action of the Coxeter group $[5,3,3]$ on the cells of a regular $120$-cell.
The graph $X_{ \{ 5, 3, {}_{3}^{3} \}}$ can be build from two copies of the graph in \cref{fig:53-33_A}. It consists of $240$ vertices so that if $1 \leq i,j \leq 120 $, then there is edge between $i$ and $j$ if and only if there is an edge between $i+120$ and $j+120$. 
If $i,j$ are connected by an edge colour $3$ marked with a cross of colour $4$, then there is an edge colour $4$ connecting $i$ and $j'=j+120$ and an edge of colour $4$ connecting $j$ and $i'=i+120$ (see \cref{fig:5-33_B}).

From the graph described above and $t \geq 1$ we build the graph $\cX_{\{ 5, 3, {}_{3}^{3} \}}^{t}$ with vertex set $\left\{ 1, \dots, 240 \right\} \times \bZ_{t} $. 
The edges of $\cX_{\{ 5, 3, {}_{3}^{3} \}}^{t}$ are of the form $\{(i, \ell),  (j.\ell)\}$ (of colour $c$) whenever $i$ and $j$ are connected by an edge of colour $c$ in the graph $X_{\{ 5, 3, {}_{3}^{3} \}}$.   
For every $\ell \in \bZ_{t}$, there are two (dotted) edges of colour $4$ connecting the pairs of vertices $\{(240, \ell) , (118, \ell+1)\}$ and $\{(238, \ell), (120, \ell + 1)\}$.

For a given $\ell \in \bZ_{t}$, consider the set of vertices $A_{\ell}=\left\{ (i,\ell) : 1 \leq i \leq 70 \right\} $, $B_{\ell}=\left\{ (i,\ell) : 71 \leq i \leq 118 \right\} $, $A'_{\ell}=\left\{ (i,\ell) : 121 \leq i \leq 190 \right\} $ and $B'_{\ell}=\left\{ (i,\ell) : 191 \leq i \leq 238 \right\} $. 
Note that all the edges of colour $3$ induced by the edges marked with a  cross in \cref{fig:53-33_A} connect vertices in $A_{\ell}$ with vertices in $B_{\ell}$ and vertices in $A'_{\ell}$ with vertices in $B'_{\ell}$ whereas the edges of colour $4$ induced by the crosses connect vertices in $A_{\ell}$ with vertices in $B'_{\ell}$ and vertices in $A'_{\ell}$ with vertices in $B_{\ell}$ (see \cref{fig:53-33_B}).

\begin{figure}
\def\svgwidth{.6\textwidth}
\begingroup \makeatletter \providecommand\color[2][]{\errmessage{(Inkscape) Color is used for the text in Inkscape, but the package 'color.sty' is not loaded}\renewcommand\color[2][]{}}\providecommand\transparent[1]{\errmessage{(Inkscape) Transparency is used (non-zero) for the text in Inkscape, but the package 'transparent.sty' is not loaded}\renewcommand\transparent[1]{}}\providecommand\rotatebox[2]{#2}\newcommand*\fsize{\dimexpr\f@size pt\relax}\newcommand*\lineheight[1]{\fontsize{\fsize}{#1\fsize}\selectfont}\ifx\svgwidth\undefined \setlength{\unitlength}{612.47363281bp}\ifx\svgscale\undefined \relax \else \setlength{\unitlength}{\unitlength * \real{\svgscale}}\fi \else \setlength{\unitlength}{\svgwidth}\fi \global\let\svgwidth\undefined \global\let\svgscale\undefined \makeatother \begin{picture}(1,1.23760788)\lineheight{1}\setlength\tabcolsep{0pt}\put(0,0){\includegraphics[width=\unitlength,page=1]{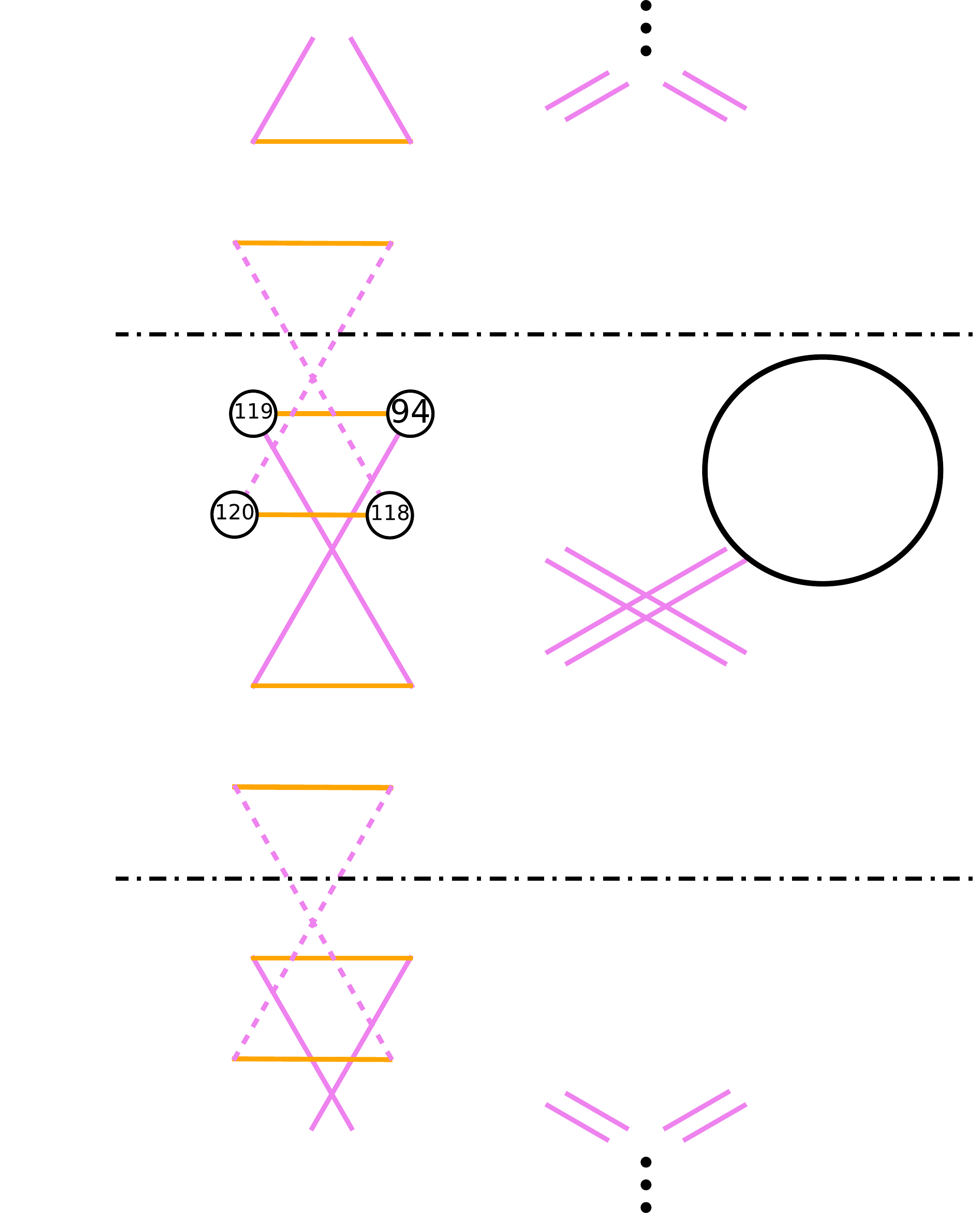}}\put(0.47894287,0.73450871){\makebox(0,0)[lt]{\lineheight{1.25}\smash{\begin{tabular}[t]{l}$B_{\ell}$\end{tabular}}}}\put(0.83967469,0.73450871){\makebox(0,0)[t]{\lineheight{1.25}\smash{\begin{tabular}[t]{c}$A_{\ell}$\end{tabular}}}}\put(0,0){\includegraphics[width=\unitlength,page=2]{53-33_B.pdf}}\put(0.47894287,0.45681719){\makebox(0,0)[lt]{\lineheight{1.25}\smash{\begin{tabular}[t]{l}$B'_{\ell}$\end{tabular}}}}\put(0.83967469,0.45681719){\makebox(0,0)[t]{\lineheight{1.25}\smash{\begin{tabular}[t]{c}$A'_{\ell}$\end{tabular}}}}\put(0,0){\includegraphics[width=\unitlength,page=3]{53-33_B.pdf}}\put(0.47894287,1.01220028){\makebox(0,0)[lt]{\lineheight{1.25}\smash{\begin{tabular}[t]{l}$B'_{\ell-1}$\end{tabular}}}}\put(0.83967469,1.01220028){\makebox(0,0)[t]{\lineheight{1.25}\smash{\begin{tabular}[t]{c}$A'_{\ell-1}$\end{tabular}}}}\put(0,0){\includegraphics[width=\unitlength,page=4]{53-33_B.pdf}}\put(0.47894287,0.17912562){\makebox(0,0)[lt]{\lineheight{1.25}\smash{\begin{tabular}[t]{l}$B_{\ell+1}$\end{tabular}}}}\put(0.83967469,0.17912562){\makebox(0,0)[t]{\lineheight{1.25}\smash{\begin{tabular}[t]{c}$A_{\ell+1}$\end{tabular}}}}\put(0,0){\includegraphics[width=\unitlength,page=5]{53-33_B.pdf}}\put(-0.00203288,1.03534125){\makebox(0,0)[lt]{\lineheight{1.25}\smash{\begin{tabular}[t]{l}$X_{\{ 5, 3, {}_{3}^{3} \}}^{\ell-1}$\end{tabular}}}}\put(-0.00203288,0.61880397){\makebox(0,0)[lt]{\lineheight{1.25}\smash{\begin{tabular}[t]{l}$X_{\{ 5, 3, {}_{3}^{3} \}}^{\ell}$\end{tabular}}}}\put(-0.00203288,0.20226661){\makebox(0,0)[lt]{\lineheight{1.25}\smash{\begin{tabular}[t]{l}$X_{\{ 5, 3, {}_{3}^{3} \}}^{\ell+1}$\end{tabular}}}}\put(0,0){\includegraphics[width=\unitlength,page=6]{53-33_B.pdf}}\end{picture}\endgroup  \caption{The graph $\cX_{\{ 5, 3, {}_{3}^{3} \}}^{t}$.  }
\label{fig:53-33_B}
\end{figure}

Consider the involution $\varphi$ of the vertices of $\cX_{\{ 5, 3, {}_{3}^{3} \}}^{t}$ defined by 
\[\begin{aligned} 
	(i,\ell) &\leftrightarrow (i+120,\ell) && \text{for } 1 \leq i \leq 70,\\
	(119,\ell) &\leftrightarrow (239,\ell), \\
	(240,\ell) &\leftrightarrow (120,\ell+1), 
\end{aligned}\] 
while fixing all the other vertices. 
For every $\ell \in \bZ_{t}$, the involution $\varphi$ swaps the sets $A_{\ell}$ and $A'_{\ell}$ and two pairs of vertices incident to the dotted lines. 
This induces a graph automorphism swapping the colours $3$ and $4$ while fixing the other three. 

The graph automorphism $\varphi$ induces a group-automorphism $\bar{\varphi}$ of the permutation group $G$ induced by the graph. 
The automorphism $\bar{\varphi}$ swaps the generators $\rho_{3}$ and $\rho_{4}$ while fixing $\rho_{i}$ for $i \leq 2$.
As mentioned before, the connected components induced by removing the edges of colour $4$ are isomorphic to the CPR-graph induced by  the action of the Coxeter group $[5,3,3]$ on the cells of a $120$-cell. 
This implies that the group $G_{4}$ is isomorphic to $[5,3,3]$.
The group automorphism $\bar{\varphi}$ maps the subgroup $G_{4}$ to the subgroup $G_{3}$, implying that the latter is also isomorphic to $[5,3,3]$.
Observe also that all the connect components of the graph induced by the colours $3$ and $4$ are isolated vertices or alternating cycles with $2$ or $4$ vertices.
The discussion above together with \cref{lem:polygonalAction} imply that the permutation group $G$ satisfies the relations implied by the Coxeter diagram in \cref{eq:cox53-33}.
Moreover, the automorphism $\bar{\varphi}$ can be seen as a horizontal symmetry of the diagram.

\begin{equation}\label{eq:cox53-33}
	\begin{tikzcd}[column sep=1em, row sep=.3em, ]
 	& & & [-0.3em] \overset{\rho_{3}}{\textcolor{Orange}{\bullet}} \arrow[dddl, dash] \\
 	& & & \\
 	& & & \\
 	\underset{\rho_{0}} {\textcolor{Red}{\bullet}} \arrow[r, dash, "5"]
 	&\underset{\rho_{1}} {\textcolor{Green}{\bullet}} \arrow[r, dash,]
 	&\underset{\rho_{2}}{\textcolor{Blue}\bullet} \arrow[dr, dash] & \\
	& & & [-0.3em] \underset{\rho_{4}}{\textcolor{Violet}{\bullet}} 
\end{tikzcd}
\end{equation}

Observe that the $\left\{1,2,3,4 \right\}$-component of the vertex $(120,0)$ consists of $8$ vertices.
Moreover, this is the CPR-graph induced by the Coxeter group $[3, {}_{3}^{3}]$ on the vertices of the $4$-dimensional cross-polytope (recall that the group $[3, {}_{3}^{3}]$ can be seen as an index $2$ subgroup of $[3,3,4]$). 
This implies that the subgroup $G_{0}$ is isomorphic to $[3, {}_{3}^{3}]$. 
The group $G_{1} $ is the direct product $ \left\langle \rho_{0} \right\rangle \times \left\langle \rho_{3}, \rho_{2}, \rho_{4} \right\rangle \cong  [2,3,3]$.
Similarly, $G_{2}=\left\langle \rho_{0}, \rho_{1} \right\rangle \times \left\langle \rho_{3} \right\rangle \times \left\langle \rho_{4} \right\rangle  \cong [5,2,2]$.
This implies that the subgroup $G_{i}$ is isomorphic to a finite Coxeter group for every $i \in \left\{ 0,1,2,3,4 \right\} $, hence, it satisfies the intersection property.

To show that the group $G$ satisfies the intersection property we just need to prove that $G_{i}\cap G_{j} = G_{i,j}$ for every  $\left\{ i,j \right\} \subset \left\{ 0,1,2,3,4 \right\} $.
To do so we use \cref{lem:IP_CPR}. 
More precisely, we list below a vertex $x$ that satisfies \cref{eq:IP_gcd} for some of the pairs $\left\{ i,j \right\} $.
For the remaining pairs we prove that there exists a vertex $x$ that satisfies \cref{eq:IP_CPR}.
\begin{itemize}
	\item If $\left\{ i,j \right\} =\left\{ 0,1 \right\}  $ take $x=(1,0)$.
	\item If $\left\{ i,j \right\} =\left\{ 0,2 \right\}  $ take $x=(1,0)$.
	\item If $\left\{ i,j \right\} =\left\{ 1,2 \right\}  $ take $x=(1,0)$.
	\item If $\left\{ i,j \right\} =\left\{ 2,3 \right\}  $ take $x=(3,0)$.
	\item If $\left\{ i,j \right\} =\left\{ 0,4 \right\}  $ take $x=(3,0)$.
	\item If $\left\{ i,j \right\} =\left\{ 3,4 \right\}  $ take $x=(120,0)$.
\end{itemize}

For the pair $\left\{ 0,3 \right\} $ take $x=(117,0)$.
Notice that $\left| xG_{0} \cap xG_{3} \right| = 4$.
Observe that $\stab_{G_{0}}(x)=\left\langle \rho_{2}, \rho_{3},\rho_{4} \right\rangle \cong [3,3]$.
Note also that \[\left\langle \rho_{2},\rho_{4} \right\rangle \leq \stab_{G_{0}}(x) \cap \stab_{G_{3}} \leq \stab_{G_{0}}(x).\]
However, $\left\langle \rho_{1}, \rho_{2} \right\rangle $ is maximal in $\left\langle \rho_{1}, \rho_{2}, \rho_{4} \right\rangle $, which implies that $\left| \stab_{G_{0}} \cap \stab_{G_{3}} \right| = 6$, hence \[\left| xG_{0} \cap xG_{3} \right|\cdot \left| \stab_{G_{0}} \cap \stab_{G_{3}} \right| = 24 = G_{0,3}. \]
A similar argument holds for the pair $\left\{ 0,4 \right\} $.

Likewise, for the pair $\left\{ 1,3 \right\} $ take the vertex $x=(118,0)$. Note that $\left| xG_{1}\cap xG_{3} \right| = 3 $ and that $\stab_{G_{1}}(x)=\left\langle \rho_{0}, \rho_{3}, \rho_{4} \right\rangle $.
\[\left\langle \rho_{0},\rho_{4} \right\rangle \leq \stab_{G_{1}}(x) \cap \stab_{G_{3}}(x) \lneq \left\langle \rho_{0},\rho_{3},\rho_{4} \right\rangle. \]
The group $\left\langle \rho_{0},\rho_{4} \right\rangle$ is of index $2$, hence maximal in $\left\langle \rho_{0},\rho_{3},\rho_{4} \right\rangle$. This implies that $\left\langle \rho_{0},\rho_{4} \right\rangle = \stab_{G_{1}}(x) \cap \stab_{G_{3}}(x)$ and that $x$ satisfies \cref{eq:IP_CPR}.
The exact same argument can be used for the pair $\left\{ 1,4 \right\} $.

The intersection property for $G$ follows from \cref{prop:IP}.

By \cref{thm:FT_rank3}, to show that the group $G$ is flag-transitive we only need to prove that \[G_{i}\cap G_{j}G_{k} = G_{i,j} G_{i,k}\] for every  $\left\{ i,j,k \right\} \subset \left\{ 0,1,2,3,4 \right\}$.

In the list below we give a vertex $x$ for some of the ordered triples $(i,j,k)$ so that \cref{eq:FT_CPR} holds for such vertex.
\cref{lem:FT_CPR} implies that $G_{i}\cap G_{j}G_{k} = G_{i,j} G_{i,k}$ for the given subset $\left\{ i,j,k \right\} $.

\begin{itemize}
	\item $x=(7,0)$ for $(i,j,k)=(0,1,2)$.
	\item $x=(7,0)$ for $(i,j,k)=(0,1,3)$.
	\item $x=(7,0)$ for $(i,j,k)=(0,1,4)$.
	\item $x=(1,0)$ for $(i,j,k)=(2,0,3)$.
	\item $x=(1,0)$ for $(i,j,k)=(2,0,4)$.
	\item $x=(2,0)$ for $(i,j,k)=(1,2,3)$.
	\item $x=(2,0)$ for $(i,j,k)=(1,2,4)$.
	\item $x=(1,0)$ for $(i,j,k)=(2,3,4)$.
\end{itemize}

For the subset $\left\{ 0,3,4 \right\} $ it can be shown that \[G_{0} \cap \left( \bigcup_{\alpha \in R}\alpha G_{4} \right) \bigcup_{\alpha \in R}\left( G_{0}\cap \alpha G_{4} \right)  = (G_{0,3})(G_{0,4}),\] where $R$ is a set of coset representatives of $G_{3,4}$ in $G_{3}$.
We do not give the explicit computations because they are long but straightforward, but we briefly explain how to prove it.

First observe that $G_{_{3,4}}$ has index $120$ in $G_{3}$, which implies that $R$ must have $120$ elements.
Since $G_{3}$ is a Coxeter group, the set $R$ is easy to compute (see \cite[Section 1.10]{Humphreys_1990_ReflectionGroupsCoxeter}).
Moreover, $R$ can be computed so that $R_{0} := \left\{ \id, \rho_{4}, \rho_{2}\rho_{4}, \rho_{1}\rho_{2}\rho_{4} \right\} \subset R $.
Observe that $R_{0} \subset G_{0}$, which implies that 
\[\begin{aligned} 
\left| \bigcup_{\alpha \in R_{0}} \left( G_{0} \cap \alpha G_{4} \right) \right|  
&= \left| \bigcup_{\alpha \in R_{0}} \alpha\left( G_{0} \cap G_{4} \right) \right|\\ 
&= 4(\left| G_{0,4} \right| ) \\
&= \left( \frac{|G_{0,3}|}{|G_{0,3,4}|} \right)\left( |G_{0,4}| \right)
&= |(G_{0,3})(G_{0,4})|.
\end{aligned}\] 

Therefore, it remains to show that $G_{0}\cap \alpha G_{4} =\emptyset$ for $\alpha \in R \sm R_{0}$.
By a similar argument to the one used previously, observe that if $\beta \in G_{0}$, then $G_{0}\cap \alpha G_{4} =\emptyset$ if and only if $G_{0}\cap \beta \alpha G_{4} =\emptyset$.
This observation reduces the condition above to a set $R' \subset R$ with the property that every element in $R$ is either in $R'$ of is of the form $\beta \alpha$ for some $\beta \in G_{0}$ and $\alpha \in R'$.
The set $R'$ consists of $21$ elements and for each $\alpha \in R'$ we can find a vertex $x$ such that $xG_{0} \cap (x \alpha) G_{4} = \emptyset$, implying that $G_{0} \cap \alpha G_{4} = \emptyset$ for each $\alpha \in R'$.
This proves that $G_{0} \cap G_{3}G_{4} = (G_{0,3})(G_{0,4})$.

A very similar approach can be used for the tuple $(1,3,4)$ to show that \[G_{1} \cap G_{3} G_{4} = (G_{1,3})(G_{1,4}).\]

Flag transitivity for the group $G$ follows from \cref{thm:FT_rank3}.

\section{Conclusions}

The constructions in 
\cref{sec:constructions} give us an infinite family of locally spherical regular hypertopes for each hyperbolic type. 
More precisely, for each hyperbolic type $D$ and each positive integer $t$, we build a properly-edge-coloured graph $\cX_{D}^{t}$ and prove that for all but a few integers $t$, the induced permutation group is the type-preserving automorphism group of a regular hypertope $\cH^{t}_{D}$ of type $D$. 
It should be pointed out that if $s \neq t$ it does not follow that $\cH^{s}_{D} \not\cong \cH^{t}_{D}$. 
For example, it can be easily checked that the hypertopes $\cH^{1}_{\left( 3,3,3,4 \right)}$ and $\cH^{2}_{\left( 3,3,3,4 \right)}$ are isomorphic.
The previous observation does not necessarily imply that our constructions do not yield an infinite family of regular hypertopes for each hyperbolic type, but it makes it less obvious. 
The following result justifies our claim.

\begin{proposition}
	Let $D$ be a diagram of hyperbolic type and $t \geq 1$ an integer. Assume that $\cH^{t}_{D}$ is the regular hypertope built in \cref{sec:constructions} for the corresponding $D$ and $t$, then $t$ divides the order of $\aut_{I}(\cH^{t}_{D})$.
\end{proposition}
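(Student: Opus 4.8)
The plan is to count the vertices of the constructed graph. By construction (see \cref{sec:constructions}), the graph $\cX^{t}_{D}$ has vertex set $V_{D}\times\bZ_{t}$, where $V_{D}=\{1,\dots,d\}$ is the vertex set of the base graph $X_{D}$; hence $\cX^{t}_{D}$ has exactly $dt$ vertices. The permutation group $G$ induced by $\cX^{t}_{D}$ is, in each case, shown to be a flag-transitive C-group, so by \cref{thm:CGroupFlagTrans_Hypertope} it is the type-preserving automorphism group of $\cH^{t}_{D}$; in particular $|\autI(\cH^{t}_{D})|=|G|$. Since $t\mid dt$, it suffices to prove that $dt$ divides $|G|$.

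For this I would first observe that $\cX^{t}_{D}$ is connected: each of the $t$ copies $X^{\ell}_{D}$ is isomorphic to the connected base graph $X_{D}$, and for every $\ell\in\bZ_{t}$ there is a (dotted) edge joining a vertex of $X^{\ell}_{D}$ to a vertex of $X^{\ell+1}_{D}$, so all $t$ copies lie in a single connected component. As recalled in \cref{sec:cpr_graphs}, connectedness of a CPR-graph is equivalent to transitivity of the induced permutation group on its vertex set, so $G$ acts transitively on the $dt$ vertices of $\cX^{t}_{D}$. The Orbit--Stabiliser Theorem then gives $|G|=dt\cdot|\stab_{G}(x)|$ for any vertex $x$, hence $dt\mid|G|$ and therefore $t\mid|G|=|\autI(\cH^{t}_{D})|$.

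The only case-dependent ingredient — and hence the point requiring care — is the connectedness of $\cX^{t}_{D}$, which reduces to connectedness of each base graph $X_{D}$; this is immediate from the explicit figures given for each hyperbolic type in \cref{sec:constructions}, so no serious obstacle remains. It is perhaps illuminating to note, in addition, that the shift $(v,\ell)\mapsto(v,\ell+1)$ is a colour-preserving automorphism of $\cX^{t}_{D}$ of order $t$ which commutes with every $\rho_{i}$ and hence centralises $G$; this makes the divisibility $t\mid|\autI(\cH^{t}_{D})|$ structurally transparent, although the orbit--stabiliser argument above already establishes it.
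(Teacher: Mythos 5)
Your proof is correct, but it takes a genuinely different route from the paper's. You argue globally: $\cX^{t}_{D}$ is connected (each copy $X^{\ell}_{D}$ is connected and consecutive copies are joined by dotted edges), so $G$ acts transitively on the $dt$ vertices, and the Orbit--Stabiliser Theorem gives $dt \mid |G|$, hence $t \mid |G| = |\autI(\cH^{t}_{D})|$. The paper instead produces an explicit element whose order is divisible by $t$: taking a dotted edge of colour $i$ joining a vertex of $X^{\ell}_{D}$ to one of $X^{\ell+1}_{D}$ and a solid path inside $X^{0}_{D}$ between the two relevant base vertices, inducing a word $\omega$, one gets $(x,\ell)\,\omega\rho_{i} = (x,\ell+1)$ for all $\ell \in \bZ_{t}$, so the cyclic subgroup $\left\langle \omega\rho_{i} \right\rangle$ has order divisible by $t$. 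Both arguments rest on exactly the same case-dependent input, namely connectedness of the base graph $X_{D}$ as read off from the figures, and both are equally short. Yours buys the stronger divisibility $dt \mid |G|$; the paper's buys a concrete cyclic subgroup of order a multiple of $t$, which says slightly more about the group structure. One caution on your closing remark: the shift $(v,\ell)\mapsto(v,\ell+1)$ does centralise $G$ as a permutation, but it need not belong to $G$, so on its own it does not yield $t \mid |G|$; it is your orbit--stabiliser argument that actually carries the proof.
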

\begin{proof}
	Let $(y,-1)$ and $(x,0)$ be two vertices in $\cX_{D}^{t}$, the CPR-graph associated with $\cH^{t}_{D}$, such that $\left\{ (y,0), (x,0) \right\} $  is a dotted edge of colour $i$.
	
	Let $X^{0}_{D}$ be the subgraph of $\cX_{D}^{t}$ consisting of the vertices whose second coordinate is $0$, and the solid edges connecting two of them.
	Since $X^{0}_{D}$ is connected, there exist a path $P$ (of solid edges) connecting $(x,0)$ and $(y,0)$.
	The path $P$ induces a group element $\omega$. 
	Finally observe that $(x, \ell)\omega\rho_{i} = (x, \ell+1)$ for every $\ell \in \bZ_{t}$.
	This implies that the orbit of $(x,0)$ under the cyclic group $\left\langle \omega\rho_{i} \right\rangle $ has length $t$ and therefore $t$ is a divisor of $\left| \left\langle \omega\rho_{i} \right\rangle  \right| $, which in turn divides $\left| \aut_{I}\left( \cH^{t}_{D} \right) \right| $.
	
\end{proof}

\begin{corollary}
	For every positive integer $t$ and every diagram of hyperbolic type $D$ there exists a regular hypertope $\cH$ with Coxeter diagram $D$ such that \[\left| \aut_{I}\left( \cH \right) \right| \geq t. \]
\end{corollary}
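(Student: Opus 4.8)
The plan is to obtain the corollary as an immediate consequence of the preceding proposition together with the constructions carried out in \cref{sec:constructions}. Recall from those constructions that, for each diagram $D$ of hyperbolic type, the permutation group induced by $\cX_{D}^{t}$ is the type-preserving automorphism group of a regular hypertope $\cH_{D}^{t}$ with Coxeter diagram $D$ for every positive integer $t$ outside a finite exceptional set $E_{D}$ (in each case analysed, $E_{D}$ is empty or equal to $\{1\}$).

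First I would fix a diagram $D$ of hyperbolic type and a positive integer $t$, and choose an integer $s$ with $s \geq t$ and $s \notin E_{D}$; such an $s$ exists because $E_{D}$ is finite. By the construction associated with $D$, the group induced by $\cX_{D}^{s}$ is then the type-preserving automorphism group of a regular hypertope $\cH_{D}^{s}$ whose Coxeter diagram is $D$. Applying the preceding proposition to $\cH_{D}^{s}$ yields that $s$ divides $\left| \aut_{I}(\cH_{D}^{s}) \right|$; in particular $\left| \aut_{I}(\cH_{D}^{s}) \right| \geq s \geq t$. Taking $\cH := \cH_{D}^{s}$ then completes the proof.

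There is no genuine obstacle here once the proposition and the constructions of \cref{sec:constructions} are in hand; the statement is essentially a one-line deduction. The only point that deserves attention is the implicit claim that, for each $D$, only finitely many values of $t$ fail to produce a regular hypertope — this is exactly what the case-by-case analyses of \cref{sec:constructions} establish (the failing value, if any, being $t=1$, for which the intersection property and flag-transitivity are verified computationally), so choosing $s$ as above is always possible and the deduction is legitimate.
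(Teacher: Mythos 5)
Your proof is correct and follows essentially the same (implicit) route as the paper: the corollary is an immediate consequence of the preceding proposition, applied to the hypertope $\cH_{D}^{s}$ for a suitable $s \geq t$ for which the construction of \cref{sec:constructions} is valid. Your extra care in avoiding the finitely many exceptional values of $t$ is a harmless refinement, since in each case the paper verifies even $t=1$ computationally.
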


\section*{Acknowledgements}

Both authors were supported by the Natural Sciences and Engineering Research Council of Canada (NSERC Canada). 
The first author was also supported by the Post Doctoral Scholarship Program at UNAM, Mexico.
The research on this paper was partially developed while the first author was a Postdoctoral Visitor in the Department of Mathematics and Statistics, York University, Canada.

\printbibliography

\end{document}